\DeclareMathOperator{\erg}{erg}
\newcommand \mtrho {{\mathfrak M}({\mathfrak T}, \rho)}
\newcommand \mergtrho {{\mathfrak M}_{\erg}({\mathfrak T}, \rho)}
\newcommand \mergtrhof {{\mathfrak M}_{\erg}({\mathfrak T}, \rho_f)}
\DeclareMathOperator{\mes}{mes}
\DeclareMathOperator{\Mes}{Mes}
\DeclareMathOperator{\Int}{Int}
\DeclareMathOperator{\inv}{inv}
\DeclareMathOperator{\myint}{int}
\newtheorem{theorem}{Theorem}
\newtheorem{lemma}{Lemma}
\newtheorem{proposition}{Proposition}
\newtheorem{corollary}{Corollary}
\begin{document}

\title[Ergodic Decomposition for Inductively Compact Groups]{Ergodic Decomposition for Measures Quasi-Invariant Under Borel Actions of Inductively Compact Groups}
\author{Alexander I. Bufetov}

\address{Steklov Institute of Mathematics,
Moscow}
\address{Institute for Information Transmission Problems,
 Moscow}
\address{National Research University Higher School of Economics,
 Moscow}
\address{Rice University, Houston TX}

\date{}

\begin{abstract}

The aim of this paper is to prove ergodic decomposition theorems for probability measures 
quasi-invariant under Borel actions of inductively compact groups (Theorem \ref{ergdecstrcont})  as well as for
$\sigma$-finite invariant measures (Corollary \ref{infdec}).
 For infinite measures the ergodic decomposition is not unique, but 
the measure class of the decomposing measure on the space
of projective measures is uniquely defined by the initial invariant measure
(Theorem \ref{ergdecinfpcl}).
\end{abstract}

\maketitle

\section{Introduction.}

\subsection{Outline of the main results.}

The first result of this paper
establishes existence and uniqueness of ergodic decomposition for probability measures quasi-invariant under
Borel actions of inductively compact groups (Theorem \ref{ergdecstrcont}). First we show 
in Proposition \ref{equivergindec} that for actions of inductively compact group 
ergodicity of a quasi-invariant measure is equivalent to its indecomposability
(as Kolmogorov's example \cite{Fomin} shows, this equivalence does not hold for measure-preserving actions of general groups). 
The ergodic decomposition is then constructed under the additional assumption that the Radon-Nikodym cocycle of the measure is
continuous in restriction to each orbit of the group
(the {\it fibrewise continuity} condition). This condition is only restrictive  for actions of uncountable groups.
The proof of Theorem \ref{ergdecstrcont} relies on Rohlin's method of constructing ergodic
decompositions.

Theorem \ref{ergdecstrcont} is then applied to $\sigma$-finite invariant measures.
In this case the ergodic decomposition is not unique. The measure class of the decomposing measure on the space
of projective measures is however uniquely defined by the initial invariant measure
(Theorem \ref{ergdecinfpcl}).
In the sequel \cite{bufetov-inferg} to this paper, its results are applied to the 
ergodic decomposition of infinite Hua-Pickrell measures, introduced by Borodin and Olshanski \cite{BO}, 
on spaces of infinite Hermitian matrices.

For completeness of the exposition, Kolmogorov's example
of a group action admitting decomposable ergodic measures is also included.

For actions of the group ${\mathbb Z}$ with a quasi-invariant measure,
the ergodic decomposition theorem
was obtained by Kifer and Pirogov \cite{Kif} who used the method of Rohlin \cite{Roh}.

For actions of locally compact groups, a general ergodic decomposition theorem is due
to Greschonig and Schmidt \cite{GreSchm} whose approach is based on Choquet's theorem (see, e.g., \cite{Phelps}).
In order to be able to apply Choquet's theorem, Greschonig and Schmidt use Varadarajan's theorem \cite{Var} claiming
that every Borel action of a locally compact group admits a continuous realization (see Theorem \ref{varad} below).
It is not clear whether  a similar result holds for inductively compact groups (see the question following Theorem \ref{varad}).

For the natural action of the infinite unitary group on the space of infinite Hermitian matrices,
ergodic decomposition of invariant probability measures was constructed by Borodin and Olshanski \cite{BO}.
Borodin and Olshanski \cite{BO} rely on Choquet's Theorem, which, however, cannot be used directly
since the space of infinite Hermitian matrices is not compact. Borodin and Olshanski
embed the space of probability  measures on the space of infinite Hermitian matrices into a larger convex compact
metrizable set to which Choquet's Theorem can be applied.

Rohlin's approach to the problem of ergodic decomposition requires neither continuity nor compactness,
and the results of this paper apply to all Borel actions of inductively compact groups.
The martingale convergence theorem is used instead of the ergodic theorem on which Rohlin's argument relies;
the idea of using martingale convergence for studying invariant measures for actions of inductively compact
groups goes back to Vershik's note \cite{Vershik}.

\subsection{Measurable actions of topological groups on Borel spaces.}
\subsubsection{Standard Borel spaces}
Let $X$ be  a set, and let $\mathcal{B}$ be a sigma-algebra on $X$.
The pair $(X, \mathcal{B})$ will be called {\it{a standard Borel space}} if
there exists a bijection between $X$ and the unit interval  which sends $\mathcal{B}$ to the sigma-algebra of Borel sets. We will continue to call $\mathcal{B}$ the Borel sigma-algebra, and measures defined on
$\mathcal{B}$ will be called Borel measures.

Let $\mathfrak M(X)$ be the space of Borel probability measures on $X$. A natural $\sigma$-algebra $\mathcal B ( \mathfrak M(X) )$ on the space $\mathfrak M(X)$ is defined as follows. Let $A \in X$ be a Borel subset, let $\alpha \in \mathbb R$, and let
\[
M_{A, \alpha} = \big\{ \nu \in \mathfrak M(X): \, \nu(A) > \alpha \big\}.
\]
The $\sigma$-algebra $\mathcal B (\mathfrak M(X))$ is then the smallest $\sigma$-algebra containing all sets $M_{A,\alpha}$, $A \in \mathfrak B(X)$, $\alpha \in \mathbb R$. Clearly, if $(X, \mathcal B)$ is a standard Borel space, then $(\mathfrak M(X), \mathcal B(\mathfrak M(X))$ is also a standard Borel space.

A Borel measure $\nu$ on a standard Borel space $(X, \mathcal B)$ is called {\it $\sigma$-finite} if there exists a countable family of disjoint Borel subsets
\[
X_1, \,X_2,\, \ldots,\, X_n,\ldots
\]
of $X$ such that
\[
X = \bigcup _{n=1}^\infty X_n
\]
and such that $\nu(X_n) < +\infty$ for any $n \in \mathbb N$. We denote by $\mathfrak M^\infty (X)$ the space of all $\sigma$-finite Borel measures on $X$ (note that, in our terminology, finite measures are also $\sigma$-finite). The space $\mathfrak M^\infty (X)$ admits a natural Borel structure: the Borel $\sigma$-algebra is generated by sets of the form
\[
\left\{ \nu \in \mathfrak M^\infty(X): \, \alpha< \nu(A)< \beta \right\},
\]
where $\alpha, \beta$ are real and $A$ is a Borel subset of $X$.

If $\nu$ is a Borel measure on $X$ and $f\in L_1(X, \nu)$, then  for brevity we denote
$$
\nu(f)=\int\limits_X fd\nu.
$$

As usual, by {\it a measure class} we mean the family of all sigma-finite Borel measures with the same sigma-algebra of sets of measure zero.
The measure class of a measure $\nu$ will be denoted $[\nu]$.
We write $\nu_1 \ll \nu_2$ if $\nu_1$ is absolutely continuous with respect to $\nu_2$, while
the notation $\nu_1 \perp \nu_2$ means, as usual, that the measures $\nu_1, \nu_2$ are mutually singular.

\subsubsection{Measurable actions of topological groups}

Now let $G$ be a topological group endowed with the Borel sigma-algebra.
Assume that the group $G$ acts
on $X$ and for $g \in G$ let $T_g$ be the corresponding transformation. The action will be called {\it measurable} (or {\it Borel}) if the map
\[
\mathfrak{T}:G \times X \to X, \quad \mathfrak{T}(g,x) = T_g x
\]
is Borel-measurable.
The group $G$ acts on $\mathfrak M(X)$. It will be convenient for us to consider the right action and for $g \in G$ to introduce the measure
\[
\nu \circ T_g (A) = \nu (T_g A).
\]
The resulting right action is, of course, Borel.

\subsubsection{Inductively compact groups}
Let
\[
K(1) \subset K(2) \subset \, \ldots \, \subset K(n) \subset \, \ldots
\]
be an ascending chain of metrizable compact groups and set
\[
G = \bigcup\limits_{n=1}^\infty K(n).
\]
The group $G$ will then be called {\it inductively compact}. Natural examples are the infinite symmetric group
\[
S(\infty) = \bigcup_{n=1}^\infty S(n)
\]
or the infinite unitary group
\[
U(\infty) = \bigcup_{n=1}^\infty U(n)
\]
(in both examples, the inductive limit is taken with respect to the natural inclusions).

An inductively compact group $G$ is endowed with the natural topology of the inductive limit, under which a function on $G$ is continuous if and only if it is continuous in restriction to each $K(n)$.
The Borel $\sigma$-algebra on $G$ is the span of the Borel $\sigma$-algebras on $K(n), \, n \in \mathbb N$.

\subsection{Cocycles and measures}
\subsubsection{Measurable cocycles.}
 In this paper, a measurable cocycle over a measurable action $\mathfrak T$ of a topological group $G$ will always mean a positive real-valued multiplicative cocycle, that is, a measurable map
\[
\rho: G \times X \rightarrow \mathbb R_{>0}
\]
satisfying the cocycle identity
\[
\rho(gh,\, x) = \rho(g,\, T_hx) \cdot \rho(h,\,x).
\]
Given  a positive real-valued multiplicative
cocycle $\rho$ over a measurable action $\mathfrak T$ of a topological group $G$,  introduce the space $\mathfrak M({\mathfrak T}, \rho) \subset \mathfrak M(X)$ of Borel probability measures with Radon-Nikodym cocycle $\rho$ with respect to the action $\mathfrak T$:
\[
\mathfrak M({\mathfrak T}, \rho) = \left\{ \nu \in \mathfrak M (X): \, \frac{d\nu \circ T_g}{d\nu} (x) = \rho(g, x) \text{ for all } g \in G \text{ and } \nu \text{-almost all } x \in X \right\}.
\]
Note that for a given probability measure $\nu$, quasi-invariant under the action $\mathfrak T$, its Radon-Nikodym cocycle is not uniquely, but only almost uniquely defined: if two Radon-Nikodym cocycles $\rho_1,\, \rho_2$ corresponding to the same measure $\nu$ are given, then for any $g \in G$ the equality
\[
\rho_1 (g,x) = \rho_2 (g,x)
\]
holds for $\nu$-almost all $x \in X$.

Nonetheless, the space $\mathfrak M(\mathfrak T, \rho)$ is a convex cone. Indeed, if
\[
\nu_i \circ T_g (A) = \displaystyle\int\limits_A \rho (g,x) \, d\nu_i, \quad i=1,2
\]
then also
\[
(\nu_1+\nu_2) \circ T_g(A) = \displaystyle\int\limits_A \rho (g,x) \, d(\nu_1 + \nu_2).
\]
\subsubsection{Indecomposability and ergodicity}
As before, let $\rho$ be a  positive real-valued multiplicative measurable cocycle
over a measurable action $\mathfrak T$ of a topological group $G$ on a standard Borel space $(X, {\mathcal B})$.

A measure $\nu \in \mathfrak M(\mathfrak T, \rho)$ is called {\it indecomposable} in $\mathfrak M(\mathfrak T, \rho)$ if the equality $\nu = \alpha \nu_1 + (1-\alpha) \nu_2$, with $\alpha \in (0,1)$, $\nu_1, \nu_2 \in \mathfrak M(\mathfrak T, \rho)$ implies $\nu = \nu_1 = \nu_2$.

Recall that a Borel  set $A$ is called {\it almost invariant} with respect to a Borel measure $\nu$ if
for every $g \in G$ we have $\nu(A \triangle T_g A)=0$.
Indecomposability can be equivalently reformulated as follows.
\begin{proposition}
\label{twoindec}
A Borel probability measure $\nu\in \mathfrak M(\mathfrak T, \rho)$
is indecomposable in $\mathfrak M(\mathfrak T, \rho)$ if and only if  any Borel set $A$, almost-invariant under the
action ${\mathfrak T}$ with respect to the measure $\nu$, satisfies either $\nu(A)=0$ or $\nu(X\setminus A)=0$.
\end{proposition}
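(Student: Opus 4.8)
The plan is to prove the two implications separately, each by contraposition, so that the core content becomes the equivalence between the existence of a nontrivial almost-invariant set and the existence of a nontrivial convex decomposition of $\nu$ inside $\mathfrak M(\mathfrak T, \rho)$.

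First I would produce a decomposition from a nontrivial almost-invariant set. Suppose $A$ is almost-invariant with $0 < \nu(A) < 1$; set $\alpha = \nu(A)$ and define the normalized restrictions $\nu_1 = \nu|_A/\alpha$ and $\nu_2 = \nu|_{X \setminus A}/(1-\alpha)$, so that $\nu = \alpha \nu_1 + (1-\alpha)\nu_2$ with $\nu_1 \neq \nu_2$ (they are mutually singular). The only point to verify is that $\nu_1, \nu_2 \in \mathfrak M(\mathfrak T, \rho)$. For a Borel set $B$ and $g \in G$, almost-invariance of $A$ gives $\nu(T_g B \cap A) = \nu(T_g B \cap T_g A) = \nu(T_g(B \cap A))$, since $(T_g B \cap A) \triangle (T_g B \cap T_g A) \subseteq A \triangle T_g A$ is $\nu$-null. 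Combining this with the cocycle of $\nu$ yields $\nu_1 \circ T_g(B) = \alpha^{-1}\nu(T_g(B\cap A)) = \alpha^{-1}\int_{B \cap A}\rho(g,x)\,d\nu(x) = \int_B \rho(g,x)\,d\nu_1(x)$, so $\nu_1$ has Radon--Nikodym cocycle $\rho$; the same computation applies to $\nu_2$. Hence $\nu$ is decomposable.

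For the converse I would start from a decomposition $\nu = \alpha \nu_1 + (1-\alpha)\nu_2$ with $\alpha \in (0,1)$ and $\nu_1 \neq \nu$. Since $\nu_1 \ll \nu$, let $\phi = d\nu_1/d\nu$. The key step is to show that $\phi$ is almost-invariant as a function, that is, $\phi \circ T_g = \phi$ holds $\nu$-almost everywhere for every $g$. To this end I would compute $\nu_1 \circ T_g(B)$ in two ways: on the one hand $\nu_1 \circ T_g(B) = \int_B \rho(g,x)\phi(x)\,d\nu(x)$, because $\nu_1$ has cocycle $\rho$ and $d\nu_1 = \phi\,d\nu$; on the other hand, changing variables and using that $\nu$ has cocycle $\rho$, $\nu_1\circ T_g(B) = \int_{T_g B}\phi\,d\nu = \int_B (\phi\circ T_g)(x)\,\rho(g,x)\,d\nu(x)$. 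Since $\rho(g,\cdot) > 0$ and $B$ is arbitrary, comparing the two expressions forces $\phi\circ T_g = \phi$ $\nu$-almost everywhere. Because $\nu_1 \neq \nu$ while $\int \phi\, d\nu = 1$, the function $\phi$ is not $\nu$-a.e.\ constant, so there is a threshold $c$ with $0 < \nu(\{\phi > c\}) < 1$; the super-level set $A_c = \{\phi > c\}$ is then almost-invariant (invariance of $\phi$ gives $\nu(T_g^{-1}A_c \triangle A_c)=0$, and running $g$ over the group interchanges $T_g$ and $T_g^{-1}$) and nontrivial, which is exactly the desired conclusion.

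The main obstacle is the invariance of the Radon--Nikodym derivative $\phi$ in the converse direction: it is precisely here that the hypothesis $\nu_1 \in \mathfrak M(\mathfrak T, \rho)$ (equality of the two cocycles, not merely quasi-invariance) is essential, and the argument hinges on executing the change of variables $\int_{T_g B}\phi\,d\nu = \int_B (\phi\circ T_g)\,\rho(g,\cdot)\,d\nu$ correctly in tandem with the cocycle identity. The remaining points, namely the null-set manipulations with $\triangle$ in the first direction and the selection of a level $c$ giving intermediate measure in the second, are routine.
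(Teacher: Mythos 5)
Your proof is correct, and it isolates the same key mechanism as the paper's proof --- if $\nu_1,\nu_2\in\mathfrak M(\mathfrak T,\rho)$ and $\nu_1\ll\nu_2$, then $d\nu_1/d\nu_2$ is $G$-invariant almost everywhere, so its level sets are almost-invariant --- but the surrounding architecture is genuinely different. The paper does not prove the two implications directly: it passes to a reformulation stating that any two \emph{weakly indecomposable} measures $\nu_1,\nu_2\in\mathfrak M(\mathfrak T,\rho)$ are either equal or mutually singular, and proves this via the Jordan decomposition $\nu_1=\widetilde\nu_2+\nu_3$, $\widetilde\nu_2\ll\nu_2$, $\nu_3\perp\nu_2$, applying weak indecomposability of $\nu_1$ to the almost-invariant set carrying the singular part, and only then running the invariant-derivative argument on $\varphi=d\nu_1/d\nu_2$. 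You instead compare each component with $\nu$ itself: since $\alpha\nu_1\le\nu$ gives $\nu_1\ll\nu$ for free, no Jordan decomposition is needed, and a level set $\{\phi>c\}$ of $\phi=d\nu_1/d\nu$ directly witnesses failure of weak indecomposability. You also spell out the other direction --- that the normalized restrictions of $\nu$ to an almost-invariant set and its complement stay in $\mathfrak M(\mathfrak T,\rho)$ --- which the paper leaves implicit in calling its dichotomy an ``equivalent reformulation''; note that the dichotomy \emph{as a statement} does not by itself yield that weak indecomposability implies strong indecomposability, since the components of a hypothetical decomposition are not known to be weakly indecomposable, so one must reuse its proof, which is exactly what you do. What the paper's route buys is the dichotomy itself (distinct indecomposable measures with the same cocycle are mutually singular), a fact of independent interest; what your route buys is a self-contained proof of precisely the stated equivalence. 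Two minor points to make explicit: choose a Borel version of $\phi$, and, as you already observe, $\phi\circ T_g=\phi$ a.e.\ for all $g$ gives almost-invariance in the paper's convention $\nu(A\triangle T_gA)=0$ only after replacing $g$ by $g^{-1}$.
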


 A measure $\nu \in \mathfrak M(\mathfrak T, \rho)$ is called {\it ergodic} if for every $G$-invariant Borel set $A$ we have either $\nu(A)=0$ or $\nu(X\setminus A) =0$. The set of all ergodic measures with Radon-Nikodym cocycle $\rho$ is denoted $\mathfrak M_{\erg}(\mathfrak T, \rho)$.

Indecomposable measures are a fortiori ergodic. For actions of general groups, ergodic probability measures may fail to be indecomposable: as Kolmogorov showed, the two notions are different for the natural action of the group of
all bijections of $\mathbb Z$ on the space of bi-infinite binary sequences (for completeness, we recall Kolmogorov's example in the last Section). An informal reason is that actions of ``large'' groups may have ``too few'' orbits
(a countable set in Kolmogorov's example), and consequently a convex combination of distinct ergodic measures
may also be ergodic.

Nevertheless, for actions  of  inductively compact groups,
the two notions coincide:
\begin{proposition}\label{equivergindec}
Let ${\mathfrak T}$ be a measurable action of an inductively compact group $G$
on a standard Borel space $(X, {\mathcal B})$, and let $\rho$ be a positive measurable multiplicative cocycle over
${\mathfrak T}$. If a measure $\nu\in {\mathfrak M}({\mathfrak T}, \rho)$ is ergodic, then $\nu$ is indecomposable
in ${\mathfrak M}({\mathfrak T}, \rho)$.
\end{proposition}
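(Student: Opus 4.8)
The plan is to deduce indecomposability from ergodicity through Proposition \ref{twoindec}. By that proposition, $\nu$ is indecomposable precisely when every almost-invariant Borel set is $\nu$-trivial, whereas ergodicity is the same requirement imposed only on \emph{strictly} invariant sets. Hence it suffices to show that every almost-invariant set coincides, modulo $\nu$, with a genuinely $G$-invariant Borel set. So I would assume $\nu \in \mtrho$ is ergodic, fix a Borel set $A$ with $\nu(A \triangle T_g A) = 0$ for all $g \in G$, and aim to produce a strictly $G$-invariant $A^{*}$ with $\nu(A \triangle A^{*}) = 0$; applying ergodicity to $A^{*}$ would then force $\nu(A) \in \{0,1\}$.

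The construction averages over the compact groups $K(n)$. Writing $dk$ for the normalized Haar measure on $K(n)$, I would set
\[
\phi_n(x) = \int_{K(n)} \mathbf 1_A(T_k x)\, dk .
\]
The first property to record is that each $\phi_n$ is \emph{everywhere} $K(n)$-invariant: for $h \in K(n)$ the change of variable $k \mapsto kh$ together with the right-invariance of Haar measure gives $\phi_n(T_h x) = \phi_n(x)$ for \emph{every} $x$, with no exceptional null set. The second is that $\phi_n = \mathbf 1_A$ $\nu$-almost everywhere. Here I would apply Fubini to the jointly Borel function $(k,x) \mapsto \mathbf 1_A(T_k x)$ on $K(n) \times X$, joint measurability being a consequence of the hypothesis that $\mathfrak T$ is a Borel action: for each fixed $k$ the set $\{x : \mathbf 1_A(T_k x) \neq \mathbf 1_A(x)\}$ equals $A \triangle T_{k^{-1}}A$, which is $\nu$-null by almost-invariance; hence the exceptional set in $K(n) \times X$ is $(dk \times \nu)$-null, so for $\nu$-a.e.\ $x$ one has $\mathbf 1_A(T_k x) = \mathbf 1_A(x)$ for Haar-a.e.\ $k$, forcing $\phi_n(x) = \mathbf 1_A(x)$.

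Next I would pass to the limit by putting $\Phi = \limsup_{n} \phi_n$, and this is where the inductive structure $G = \bigcup_n K(n)$ is essential. Given $h \in G$, choose $N$ with $h \in K(N)$; then for all $n \geq N$ the function $\phi_n$ is $K(n)$-invariant, hence $h$-invariant, \emph{everywhere}, so the tails of $(\phi_n(T_h x))_n$ and $(\phi_n(x))_n$ coincide and therefore $\Phi(T_h x) = \Phi(x)$ for every $x$. Thus $\Phi$ is a Borel function that is strictly $G$-invariant, while $\Phi = \mathbf 1_A$ off a single $\nu$-null set (the countable union of the null sets from the previous step). Consequently $A^{*} = \{x : \Phi(x) = 1\}$ is a strictly $G$-invariant Borel set with $\nu(A \triangle A^{*}) = 0$; ergodicity applied to $A^{*}$ gives $\nu(A^{*}) \in \{0,1\}$, whence $\nu(A) \in \{0,1\}$, and Proposition \ref{twoindec} yields indecomposability.

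The step I expect to be the main obstacle, and the conceptual heart of the argument, is the promotion of \emph{almost}-invariance to \emph{strict} invariance uniformly over the whole, generally uncountable, group $G$. For a single compact group this is immediate from Haar averaging, but any fixed average $\phi_n$ is only $K(n)$-invariant; the device of taking $\limsup_n \phi_n$ is exactly what exploits both the compactness of each $K(n)$ (to average) and the exhaustion $G = \bigcup_n K(n)$ (so that every group element eventually lies inside the averaging), producing a single set invariant under all of $G$ at once. The accompanying technical care lies in the Fubini step, resting on the joint Borel measurability of $(k,x) \mapsto T_k x$. It is worth noting that the cocycle $\rho$ enters only through the standing quasi-invariance defining $\mtrho$: the comparison of almost-invariant and strictly invariant sets above is purely measure-theoretic and uses no property of $\rho$ itself.
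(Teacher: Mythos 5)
Your proof is correct, and its skeleton is the one the paper uses: both arguments reduce the statement, via Proposition \ref{twoindec}, to promoting a $\nu$-almost-invariant Borel set to a strictly $G$-invariant one modulo $\nu$, both achieve this by Haar-averaging the indicator of $A$ over the compact groups $K(n)$, and both exploit the exhaustion $G=\bigcup_n K(n)$ through a tail condition (your $\limsup_n$, the paper's ``$=1$ for all sufficiently large $n$'') to obtain invariance under every element of $G$ simultaneously. Where you genuinely diverge is in the choice of averages and in how the almost-everywhere identification is closed. The paper averages with the cocycle weight, i.e.\ it uses the operators $\mathcal A^{\rho}_n$ of Section 2, whose $L_1$-theory requires the finiteness statement of Lemma \ref{avcomp}, and it concludes $\nu(\tilde A\triangle A)=0$ from the identity $\int_X \mathcal A^{\rho}_{\infty}\chi_A\,d\nu=\nu(A)$, which rests on the conditional-expectation interpretation and reverse martingale convergence. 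You instead use the unweighted averages $\phi_n(x)=\int_{K(n)}\mathbf 1_A(T_kx)\,d\mu_{K(n)}(k)$, identify $\phi_n=\mathbf 1_A$ $\nu$-almost everywhere by a bare Fubini argument, and obtain $\nu(A\triangle A^{*})=0$ by discarding a single countable union of null sets---no integrability lemma, no martingale theorem, and, as you correctly observe, no role for $\rho$ at all beyond the hypothesis. Your version is therefore more self-contained and makes transparent that the proposition is purely measure-theoretic; what the paper's choice buys is economy within its own architecture, since the operators $\mathcal A^{\rho}_n$, the sets defined by their level conditions, and the conditional-expectation identity are precisely the objects reused in the construction of the partition $\xi^{\nu}$ and in Proposition \ref{ergcond}.
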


\subsection{Ergodic decomposition of
quasi-invariant probability measures.}
\subsubsection{Fibrewise continuous cocycles}
To formulate the ergodic decomposition theorem for quasi-invariant measures, we
need additional assumptions on the Radon-Nikodym cocycle $\rho$.

Let $\mathfrak T$ be a measurable action of a topological group $G$
on a standard Borel space $(X, \mathcal B)$.

{\bf{Definition.}} A positive real-valued measurable cocycle $\rho: G\times X \to \mathbb R_{>0}$ over the action $\mathfrak T$ will be called {\it fibrewise continuous} if for any $x \in X$ the function $\rho_x: G \to \mathbb R_{>0}$ given by the formula $\rho_x (g) = \rho(g, x)$ is continuous.

{\bf{Remark.}} If $G$ is inductively compact,
\[
G = \bigcup\limits_{n=1}^\infty K(n), \quad K(n) \subset K(n+1)
\]
then, by definition of the inductive limit topology, the requirement of fibrewise continuity precisely means that for any $n \in \mathbb N$ the function $\rho_x$ defined above is continuous in restriction to $K(n)$.

For general actions  of topological groups, it is not clear whether the set of measures with a given Radon-Nikodym
cocycle is Borel. That is the case, however, for actions of inductively compact groups and fibrewise continuous cocycles:

\begin{proposition}
\label{rhoborel}
Let $\rho$ be a fibrewise continuous cocycle
over a measurable action $\mathfrak T$ of a separable metrizable group $G$ on a standard Borel space $(X, {\mathcal B})$.
Then the set $\mathfrak M(\mathfrak T, \rho)$ is a Borel subset of $\mathfrak M(X)$.
\end{proposition}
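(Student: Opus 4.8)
The plan is to express $\mathfrak M(\mathfrak T,\rho)$ as a \emph{countable} intersection of sets that are manifestly Borel, using the separability of $G$ and the fibrewise continuity of $\rho$ to eliminate the two uncountable quantifiers (over $g\in G$ and over Borel $A\subset X$) in the definition.

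First I would eliminate the quantifier over sets. Fix a countable algebra $\mathcal A\subset\mathcal B$ containing $X$ and generating $\mathcal B$. For fixed $g\in G$ and $A\in\mathcal A$ put
\[
E_{g,A}=\Bigl\{\nu\in\mathfrak M(X):\ \nu(T_gA)=\int_A\rho(g,x)\,d\nu(x)\Bigr\}.
\]
Since the action is Borel and each $T_g$ is a bijection, $T_g$ is a Borel automorphism of $X$, so $T_gA\in\mathcal B$ and $\nu\mapsto\nu(T_gA)$ is Borel by the very definition of $\mathcal B(\mathfrak M(X))$. For fixed $g$ the function $\rho(g,\cdot)$ is a nonnegative Borel function on $X$; approximating it from below by simple functions and using that $\nu\mapsto\nu(B)$ is Borel for each $B\in\mathcal B$, one sees that $\nu\mapsto\int_A\rho(g,x)\,d\nu$ is Borel as well. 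Hence each $E_{g,A}$ is Borel. Moreover, for fixed $g$ the membership $\nu\in\mathfrak M(\mathfrak T,\rho)$ forces $\nu\circ T_g$ and $\rho(g,\cdot)\nu$ to agree; taking $A=X$ shows $\rho(g,\cdot)\nu$ is a probability measure, and two probability measures agreeing on the generating algebra $\mathcal A$ coincide. Thus for each fixed $g$ the set $\{\nu:\ \nu\circ T_g=\rho(g,\cdot)\nu\}=\bigcap_{A\in\mathcal A}E_{g,A}$ is Borel.

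It remains to eliminate the quantifier over $g$. Fix a countable dense set $\{g_k\}\subset G$ and set $\mathfrak M_0=\bigcap_k\bigcap_{A\in\mathcal A}E_{g_k,A}$, a Borel set by the previous step; clearly $\mathfrak M(\mathfrak T,\rho)\subseteq\mathfrak M_0$, and I must prove the reverse inclusion. The useful structural fact here is that for any $\nu\in\mathfrak M(X)$ the \emph{coincidence set} $E^\nu=\{g\in G:\ \nu\circ T_g=\rho(g,\cdot)\nu\}$ is a \emph{subgroup} of $G$: the cocycle identity $\rho(gh,x)=\rho(g,T_hx)\rho(h,x)$ together with $\nu\circ T_{gh}=(\nu\circ T_g)\circ T_h$ shows $E^\nu$ is closed under products and inverses and contains the identity. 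Consequently, for $\nu\in\mathfrak M_0$ the subgroup $E^\nu$ contains the dense set $\{g_k\}$ and is therefore dense in $G$.

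The heart of the matter — and the step I expect to be the main obstacle — is to upgrade this density to the equality $E^\nu=G$, for which I would prove that $E^\nu$ is \emph{closed}; a dense closed subgroup is all of $G$, whence $\mathfrak M_0=\mathfrak M(\mathfrak T,\rho)$ and the proof is complete. This is exactly where fibrewise continuity is indispensable: for a \emph{fixed} base point $x$ the map $g\mapsto\rho(g,x)$ is continuous, so along $g_n\to g$ one has $\rho(g_n,x)\to\rho(g,x)$ pointwise. The genuine difficulty is that neither term in the defining identity is \emph{a priori} continuous in $g$ as a functional of $\nu$: the action is only assumed Borel, so $g\mapsto\nu(T_gA)$ need not be continuous, and without a $\nu$-integrable majorant the pointwise convergence $\rho(g_n,\cdot)\to\rho(g,\cdot)$ need not pass under the integral sign. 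I would therefore avoid a naive passage to the limit; instead I would exploit the subgroup structure of $E^\nu$ to reduce closedness to a statement near the identity, writing $g=g_n\cdot(g_n^{-1}g)$ with $g_n^{-1}g\to e$ and using $g_n\in E^\nu$, and then invoke fibrewise continuity — in particular $\rho(e,\cdot)\equiv1$ together with the normalization $\int_X\rho(g_n,\cdot)\,d\nu=1$ coming from $\nu\in\mathfrak M_0$ — to control the total mass of $\rho(g,\cdot)\nu$ and force the two measures to coincide. Converting this mass control into the full set-wise equality, in the absence of any continuity of the action, is the delicate point on which the argument turns.
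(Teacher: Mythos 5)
Your reduction is set up exactly as in the paper: for fixed $g$ the set $\{\nu:\ \nu\circ T_g=\rho(g,\cdot)\nu\}$ is Borel (your argument via a countable generating algebra makes precise what the paper dismisses as ``clearly Borel''), and one then intersects over a countable dense subgroup, the coincidence set $E^\nu$ being a subgroup by the cocycle identity. But the proof is not complete: everything hinges on showing that density of $E^\nu$ implies $E^\nu=G$, i.e.\ that the Radon--Nikodym identity, once it holds on a dense subgroup, holds for \emph{every} $g\in G$ --- and this is precisely the step you leave open (``the delicate point on which the argument turns''). The route you sketch cannot close it. The normalization $\int_X\rho(g_n,\cdot)\,d\nu=1$ together with the pointwise convergence $\rho(g_n,x)\to\rho(g,x)$ gives, by Fatou, only $\int_X\rho(g,\cdot)\,d\nu\le 1$, and control of total mass never forces two measures to agree set-wise. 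Moreover the factorization $g=g_n\cdot(g_n^{-1}g)$ is circular: to exploit it you must compare $\nu\circ T_{h_n}$, $h_n=g_n^{-1}g\to e$, with $\nu$, and that is exactly the unknown identity, since the elements $h_n$ need not lie in the dense subgroup.

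The paper closes this gap with an idea absent from your proposal: Varadarajan's theorem (Theorem \ref{varad}), which replaces the Borel action by a measurably isomorphic \emph{continuous} action on a compact metric space. Once the action is continuous, $g\mapsto\nu\circ T_g$ is weakly continuous, so $\nu\circ T_{g_n}\to\nu\circ T_g$ weakly; on the other side, a truncation argument --- using that $\rho_{\max}(x)=\sup_k\rho(k,x)$ is finite and measurable thanks to fibrewise continuity on a \emph{compact} group, together with Fatou's lemma and bounded convergence on the sets $\{\rho_{\max}\le N\}$ --- yields $\rho(g_n,\cdot)\nu\to\rho(g,\cdot)\nu$ weakly, and the two limits must coincide. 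The paper carries this out for compact groups (it is the proposition proved inside the proof of the second claim of Proposition \ref{ergcond}), which suffices for the intended applications by working in each $K(n)$ of an inductively compact group. Without some such continuity input on the action side --- which your plan explicitly tries to avoid --- the passage from the dense subgroup to all of $G$ remains unproved, so the proposal as it stands has a genuine gap at its central step.
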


Indeed, for fixed $g \in G$ the set
\[
\left\{ \nu \in \mathfrak M (X): \frac{d\nu \circ T_g}{d \nu} = \rho (g,x) \right\}
\]
is clearly Borel. Choosing a countable dense subgroup in $G$, we obtain the result.

In Proposition \ref{ergborel} below, we shall see that for a measurable  action of an inductively compact group, the set of
ergodic measures with a given fibrewise continuous Radon-Nikodym cocycle is Borel as well.

\subsubsection{Integrals over the space of measures}
Let ${\tilde \nu}\in {\mathfrak M}({\mathfrak M}(X))$, in other words, let ${\tilde \nu}$ be  a Borel probability
measure on the space of Borel probability measures on $X$.
Introduce a measure $\nu\in {\mathfrak M}(X)$ by the formula
\begin{equation}
\label{dec1}
\nu=\displaystyle\int\limits_{{\mathfrak M}(X)}\eta d{\tilde \nu}(\eta).
\end{equation}

The integral in the right-hand side of (\ref{dec1}) is understood in the following weak sense. For any Borel set $A\subset X$, the function ${\rm int}_A: {\mathfrak M}(X)\to {\mathbb R}$ given by the formula  ${\rm int}_A(\eta)=\eta(A)$ is clearly Borel measurable. The equality (\ref{dec1}) means that for any Borel set $A\subset X$ we have
\begin{equation}
\label{dec2}
\nu(A)=\displaystyle\int\limits_{{\mathfrak M}(X)}\eta(A) d{\tilde \nu}(\eta).
\end{equation}

\subsubsection{The ergodic decomposition theorem}

\begin{theorem} \label{ergdecstrcont}
Let $\mathfrak T$ be a measurable action of an inductively compact group $G$ on a standard Borel space 
$(X, \mathcal B)$. Let $\rho$ be a fibrewise continuous positive real-valued multiplicative cocycle over $\mathfrak T$. 
There exists a Borel subset $\widetilde{X}\subset X$ and a surjective Borel map 
$$
\pi:\:\widetilde{X}\rightarrow\mathfrak{M}_{erg}\left(\rho,\mathfrak{T}\right)
$$ 
such that
\begin{enumerate}

\item For any $\eta\in\mathfrak{M}_{erg}(\rho,\mathfrak{T})$ we have $\eta\left(\pi^{-1}(\eta)\right)=1$,

\item For any $\nu\in\mathfrak{M}\left(\rho,\mathfrak{T}\right)$ we have
$$\nu=\int\limits_{\mathfrak{M}_{erg}\left(\rho,\mathfrak{T}\right)}\eta\;d\,\bar{\nu}(\eta)\;,$$
where $\bar{\nu}=\pi_*\,\nu$.
In particular, for any $\nu\in\mathfrak{M}\left(\rho,\mathfrak{T}\right)$ we have $\nu(\widetilde{X})=1$.

\item
The correspondence $\nu \to \overline \nu$ is a Borel isomorphism between Borel spaces
$\mathfrak M (\mathfrak T, \rho)$ and $\mathfrak M( \mathfrak M_{\erg} (\mathfrak T, \rho))$, and if $\nu \in \mtrho$ and 
$\tilde \nu \in {\mathfrak M}(\mergtrho)$ are such that we have\[
\nu = \displaystyle\int\limits_{\mergtrho} \eta\, d\tilde \nu (\eta),
\]
then $\tilde \nu= \overline \nu$.

\item For any $\nu_1, \nu_2\in { \mathfrak M}_{\erg} (\mathfrak T, \rho)$, we have $\nu_1\ll\nu_2$ if and only if
$\overline\nu_1\ll\overline\nu_2$, and $\nu_1\perp \nu_2$ if and only if  $\overline\nu_1 \perp \overline\nu_2$.

\end{enumerate}

\end{theorem}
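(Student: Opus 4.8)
The plan is to carry out Rohlin's construction of ergodic components through the compact exhaustion $G=\bigcup_n K(n)$, with the reverse martingale convergence theorem playing the role of the ergodic theorem. Let $m_n$ be the normalized Haar measure on $K(n)$. For $x\in X$ and $n\in\mathbb N$ I would set
\[
\eta_x^{(n)}(A)=\frac{\int_{K(n)}\mathbf 1_A(T_gx)\,\rho(g,x)\,dm_n(g)}{\int_{K(n)}\rho(g,x)\,dm_n(g)},
\]
which is well defined and Borel in $x$ precisely because fibrewise continuity makes $g\mapsto\rho(g,x)$ continuous and bounded on the compact group $K(n)$. Using the cocycle identity together with the inversion-invariance of Haar measure, one checks that $\eta_{T_hx}^{(n)}=\eta_x^{(n)}$ for $h\in K(n)$, that $\eta_x^{(n)}$ is $K(n)$-quasi-invariant with cocycle $\rho$, and — the key computation — that for every $\nu\in\mtrho$ and every bounded Borel $f$ the function $x\mapsto\eta_x^{(n)}(f)$ equals the conditional expectation $E_\nu[f\mid\mathcal B_n]$, where $\mathcal B_n$ is the $\sigma$-algebra of $K(n)$-invariant Borel sets (this rests on the identity $\int_X F(T_hx)\rho(h,x)\,d\nu(x)=\int_X F\,d\nu$). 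Since $K(n)\subset K(n+1)$ gives a decreasing filtration $\mathcal B_1\supset\mathcal B_2\supset\cdots$ with intersection $\mathcal B_\infty$ equal to the $\sigma$-algebra of $G$-invariant sets, the sequence $\bigl(\eta_\cdot^{(n)}(f)\bigr)_n$ is a reverse martingale, so $\eta_x^{(n)}(f)\to E_\nu[f\mid\mathcal B_\infty](x)$ for $\nu$-almost every $x$ and in $L^1(\nu)$.

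Next I would assemble these scalar limits into a measure-valued map defined independently of $\nu$. Fix a countable family $\{f_k\}$ of bounded Borel functions separating Borel probability measures on the standard Borel space $X$, and let $\widetilde X$ consist of those $x$ for which $\lim_n\eta_x^{(n)}(f_k)$ exists for every $k$ and the resulting functional extends to a probability measure $\eta_x\in\mergtrho$; this is a Borel set, and I put $\pi(x)=\eta_x$. That $\eta_x$ is a genuine probability measure follows from the standard Borel realization of $X$, and that $\eta_x\in\mtrho$ follows by passing the identity $\eta_x^{(n)}\circ T_h(A)=\int_A\rho(h,\cdot)\,d\eta_x^{(n)}$ to the limit for fixed $h\in K(m)$ and $n\ge m$, again using fibrewise continuity of $\rho$ to control the test function $\rho(h,\cdot)$. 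For ergodicity, fix a $G$-invariant set $A$: then $\eta_x(A)=E_\nu[\mathbf 1_A\mid\mathcal B_\infty](x)=\mathbf 1_A(x)\in\{0,1\}$ for $\nu$-almost every $x$, and running $A$ through a countable generating subalgebra of $\mathcal B_\infty$ shows (since $\{A:\eta_x(A)\in\{0,1\}\}$ is a $\sigma$-algebra) that $\eta_x$ assigns $0$ or $1$ to every invariant set, i.e. $\eta_x\in\mergtrho$, such $\eta_x$ being moreover indecomposable by Proposition \ref{equivergindec}. The reverse martingale theorem applied to each $\nu\in\mtrho$ gives $\nu(\widetilde X)=1$, and $\int_X\eta_x(A)\,d\nu(x)=\int_X E_\nu[\mathbf 1_A\mid\mathcal B_\infty]\,d\nu=\nu(A)$ yields $\nu=\int\eta\,d\bar\nu(\eta)$ with $\bar\nu=\pi_*\nu$, proving (2). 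For (1), applying the decomposition to an ergodic $\nu=\eta$ and using triviality of $\mathcal B_\infty$ under $\eta$ forces $\pi(x)=\eta$ for $\eta$-almost every $x$, so $\bar\eta=\delta_\eta$ and $\eta(\pi^{-1}(\eta))=1$; this also shows $\pi$ is surjective.

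For (3) I would exhibit the inverse of $\nu\mapsto\bar\nu$ as the barycenter map $\tilde\nu\mapsto\int\eta\,d\tilde\nu(\eta)$, which lands in $\mtrho$ because $\mtrho$ is Borel (Proposition \ref{rhoborel}) and closed under integration, and which is Borel on $\mathfrak M(\mergtrho)$. The two maps are mutually inverse: $\int\eta\,d\bar\nu(\eta)=\nu$ is exactly (2), while $\pi_*\bigl(\int\eta\,d\tilde\nu(\eta)\bigr)=\tilde\nu$ follows from (1) since $\eta(\pi^{-1}(B))=\mathbf 1_B(\eta)$ for ergodic $\eta$; in particular any $\tilde\nu$ with $\nu=\int\eta\,d\tilde\nu(\eta)$ satisfies $\tilde\nu=\pi_*\nu=\bar\nu$, which is the asserted uniqueness. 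Finally (4) reduces, via $\bar\nu_i=\delta_{\nu_i}$, to the dichotomy that two ergodic measures with the same cocycle are either equal or mutually singular: if $\nu_1\ll\mu$ with both carrying the cocycle $\rho$, then the cocycle identity forces $d\nu_1/d\mu$ to be $G$-invariant, so applying this with $\mu=\tfrac12(\nu_1+\nu_2)$ and invoking ergodicity of $\nu_1$ and $\nu_2$ makes the densities constant, whence $\nu_1\not\perp\nu_2$ implies $\nu_1=\nu_2$; since $\delta_{\nu_1}\ll\delta_{\nu_2}$ (respectively $\delta_{\nu_1}\perp\delta_{\nu_2}$) holds iff $\nu_1=\nu_2$ (respectively $\nu_1\ne\nu_2$), both equivalences follow.

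I expect the main obstacle to be the $\nu$-independent pointwise construction of $\pi$ in the second step: the martingale theorem supplies only $\nu$-almost everywhere convergence of scalars, one test function at a time, and upgrading this to a single Borel map $x\mapsto\eta_x$ whose values are genuine probability measures carrying the cocycle $\rho$ — simultaneously for all $\nu\in\mtrho$ — is where the standard Borel structure of $X$, the countable determining family, and above all the fibrewise continuity of $\rho$ (used both to define $\eta_x^{(n)}$ and to pass quasi-invariance to the limit) must be combined with care.
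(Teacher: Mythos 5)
Your global strategy---averaging operators over the $K(n)$, reverse martingale convergence, a countable determining family of test functions, and Souslin's theorem to transfer Borel structures---is the same as the paper's, and your arguments for claims (3) and (4) (the barycenter map as inverse, and the reduction of (4) to the dichotomy that two ergodic measures with cocycle $\rho$ are equal or mutually singular) would go through once (1) and (2) are established. The genuine gap sits exactly where you place ``the main obstacle'', and the repairs you sketch would not work. First, the martingale theorem produces, for each $\nu$ and $\nu$-a.e.\ $x$, only the scalars $r_k=\lim_n\eta_x^{(n)}(f_k)$; to know that these are the moments of an actual probability measure on $X$ one needs a disintegration theorem --- in the paper, Rohlin's canonical system of conditional measures for the measurable partition into level sets of $x\mapsto (r_k)_k$. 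Appealing to ``the standard Borel realization of $X$'' does not supply this: in a metric compactification $\overline X\supset X$ the weak limits of the $\eta_x^{(n)}$ may charge $\overline X\setminus X$, so positivity of the limiting functional alone gives a measure on $\overline X$, not on $X$. Second, and more seriously, your argument that $\eta_x\in\mtrho$ fails twice over. The exact identity $\eta_x^{(n)}\circ T_h=\rho(h,\cdot)\,\eta_x^{(n)}$ cannot be ``passed to the limit'': the convergence $\eta_x^{(n)}\to\eta_x$ holds only against the fixed countable family $\{f_k\}$, and the test functions $\mathbf 1_{T_hA}$ and $\mathbf 1_A\,\rho(h,\cdot)$ are not in that family. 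Moreover, fibrewise continuity cannot ``control the test function $\rho(h,\cdot)$'': it is continuity of $g\mapsto\rho(g,x)$ for each fixed $x$, and gives no regularity whatsoever of $x\mapsto\rho(h,x)$, which is merely Borel and possibly unbounded. This step is the technical heart of the paper: there, the cocycle identity for the conditional measures is first obtained, via the uniqueness of Rohlin's conditional measures, for each element of a countable dense subgroup $K'\subset K(n_0)$ (an almost-sure argument needing no continuity), and is then extended from $K'$ to all of $K(n_0)$ using fibrewise continuity in the group variable together with Varadarajan's continuous-realization theorem for the compact group action and a truncation (Fatou) argument. None of this machinery appears in your proposal, and without it the inclusion $\pi(\widetilde X)\subset\mtrho$ --- equivalently, the fact that $\nu(\widetilde X)=1$ once $\widetilde X$ is defined as the set where the limits come from a measure in $\mergtrho$ --- is unproved.

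There is also a smaller but genuine error in your ergodicity argument: you run invariant sets $A$ through ``a countable generating subalgebra of $\mathcal B_\infty$'', but the $\sigma$-algebra of $G$-invariant Borel sets is in general not countably generated. (For the action of $\bigoplus_n\mathbb Z/2\mathbb Z$ --- an inductively compact group --- on $\{0,1\}^{\mathbb N}$ by coordinate flips, a countable generating family of invariant sets would make the atoms of $\mathcal I_G$ coincide with the orbits and hence make the orbit equivalence relation smooth, contradicting ergodicity of the non-atomic Bernoulli measure.) The correct substitute, used in the paper, is the martingale criterion for ergodicity: once $\eta_x\in\mtrho$ is known, the operators $\mathcal A_n^{\rho}$ are conditional expectations under $\eta_x$ as well; by construction every $\varphi_k$ in the countable family $\Phi$ has $\mathcal A_\infty^{\rho}\varphi_k$ constant, equal to $\eta_x(\varphi_k)$, on the level set carrying $\eta_x$, and density of $\Phi$ in $L_1(X,\eta_x)$ then forces ergodicity of $\eta_x$ --- no countably generated invariant subalgebra is required.
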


\subsection{Ergodic Decomposition of Infinite Invariant Measures}
\subsubsection{Reduction to an equivalent finite measure}
We now apply the above results to Borel actions preserving an infinite measure.
Given a measurable action $\mathfrak T$ of the group $G$, we denote by $\mathfrak M_{\inv}^\infty (\mathfrak T)$ the subset of $G$-invariant measures in $\mathfrak M^\infty$, by $\mathfrak M_{\erg}^\infty(\mathfrak T)$ the subset of $G$-invariant ergodic measures in $\mathfrak M^\infty$. It is not clear whether the sets $\mathfrak M_{\inv}^\infty (\mathfrak T)$ and $\mathfrak M_{\erg}^\infty (\mathfrak T)$ are Borel. It will be therefore convenient to consider smaller subsets of $\mathfrak M^\infty$, namely, of measures
that assign finite integral to a given positive measurable function.

 To simplify notation, consider the space $X$ fixed and omit it from notation, writing, for instance, $\mathfrak M$ instead of $\mathfrak M(X)$. Also, for a measure $\nu \in \mathfrak M^\infty$ and $f \in L_1 (X, \nu)$ write
\[
\nu(f) \;=\; \int f \, d\nu.
\]
Given a positive measurable function $f$ on $X$, we set
\[
\mathfrak M_f^\infty \;=\; \left\{ \nu \in \mathfrak M^\infty \, : \, f \in L_1 (X,\nu) \right\}.
\]
Introduce a map
\[
P_f: \, \mathfrak M^\infty_f \longrightarrow \mathfrak M
\]
by the formula
\begin{equation}
\label{defpf}
P_f (\nu) \; = \; \frac{f\nu}{\nu(f)}.
\end{equation}

Introduce a cocycle $\rho_f$ over the action $\mathfrak T$ by the formula
\[
\rho_f(g,x) \;=\; \frac{f(T_gx)}{f(x)}.
\]
A measure $\nu \in \mathfrak M_f^\infty$ is $\mathfrak T$-invariant if and only if
\[
P_f(\nu) \in \mathfrak M(\mathfrak T, \rho_f).
\]

Denote
\begin{align*}
\mathfrak M_{f,1}^\infty \;=\; \left\{ \nu \in \mathfrak M_f^\infty \, : \, \nu(f) = 1 \right\}; \\
\mathfrak M_{f,1,\inv}^\infty (\mathfrak T) \;=\; \mathfrak M_{f,1}^\infty \cap \mathfrak M_{\inv}^\infty (\mathfrak T); \\
\mathfrak M_{f,1,\erg}^\infty (\mathfrak T) \;=\; \mathfrak M_{f,1}^\infty \cap \mathfrak M_{\erg}^\infty (\mathfrak T).
\end{align*}
The set $\mathfrak M_{f,1}^\infty$ is Borel by definition. The map $P_f$ yields a Borel isomorphism of Borel spaces $\mathfrak M_{f,1}^\infty$ and $\mathfrak M$; the former is consequently a standard Borel space.
Furthermore, we clearly have
\begin{align*}
P_f (\mathfrak M_{f,1,\inv}^\infty ) &\;=\; \mathfrak M(\mathfrak T, \rho_f); \\
P_f (\mathfrak M_{f,1,\erg}^\infty ) &\;=\; \mathfrak M_{\erg}(\mathfrak T, \rho_f).
\end{align*}

In order to be able to apply Theorem \ref{ergdecstrcont} to
$\mathfrak M(\mathfrak T, \rho_f)$, we need an additional assumption on the function $f$.

{\bf{Definition.}} A Borel measurable function $f: X \to \mathbb R$ is said to be {\it fibrewise continuous} if for any $x \in X$ the function $f(T_g x)$ is continuous in $g \in G$.

In particular, if $X$ is a metric space, and the action ${\mathfrak T}$ is itself continuous, then
any continuous function is a fortiori fibrewise continuous. To produce continuous integrable functions, one can use the following simple proposition.
\begin{proposition}
Let $X$ be a metric space, and let $\nu$ be a sigma-finite Borel measure on $X$ assigning finite weight to every ball.
Then the space $L_1(X, \nu)$ contains a positive continuous function.
\end{proposition}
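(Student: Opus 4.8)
The plan is to build the desired function as a radial weight: fix a base point $x_0 \in X$ and look for $f$ of the form $f(x) = \psi(d(x,x_0))$, where $\psi:[0,\infty)\to(0,\infty)$ is a continuous, positive, non-increasing function to be chosen. Since the distance function $x \mapsto d(x,x_0)$ is $1$-Lipschitz, hence continuous, and $\psi$ will be continuous and strictly positive, the resulting $f$ is automatically continuous and positive; the entire burden of the proof is therefore to arrange integrability by letting $\psi$ decay fast enough.

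First I would exhaust the space by the open balls $B_n = \{x \in X : d(x,x_0) < n\}$. Every point of $X$ lies in some $B_n$, so $X = \bigcup_{n=1}^\infty B_n$, and by hypothesis each $B_n$ has finite measure; write $M_n = \nu(B_n) < \infty$, a non-decreasing sequence. (Note that this already shows $\nu$ to be $\sigma$-finite, so that hypothesis is in fact subsumed by the finiteness of ball measures.) The space is then partitioned into the disjoint Borel annuli $\{n \le d(\cdot,x_0) < n+1\}$, each of measure at most $M_{n+1}$.

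Next I would specify $\psi$ at the integers by $\psi(n) = 2^{-n}/(1+M_{n+1})$ and extend $\psi$ to all of $[0,\infty)$ by linear interpolation between consecutive integer values. Because $2^{-n}$ is decreasing and $M_{n+1}$ is non-decreasing, the sequence $\psi(n)$ is strictly positive and non-increasing, so the interpolated $\psi$ is continuous, strictly positive, and non-increasing on $[0,\infty)$. Consequently, on the annulus $\{n \le d(\cdot,x_0)<n+1\}$ one has $f = \psi(d(\cdot,x_0)) \le \psi(n)$.

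The integrability estimate then follows by summing over the annuli:
\[
\int_X f\,d\nu \;\le\; \sum_{n=0}^\infty \psi(n)\,\nu(\{n\le d(\cdot,x_0)<n+1\}) \;\le\; \sum_{n=0}^\infty \psi(n)\,M_{n+1} \;\le\; \sum_{n=0}^\infty 2^{-n} \;<\; \infty,
\]
where the last inequality uses $\psi(n)M_{n+1} = 2^{-n}M_{n+1}/(1+M_{n+1}) \le 2^{-n}$. This produces $f \in L_1(X,\nu)$ that is positive and continuous, as required. The construction involves no genuine obstacle; the only point needing attention is the simultaneous requirement that the radial weight be continuous and monotone while decaying faster than the ball measures $M_n$ grow, and this is exactly what the choice $\psi(n) = 2^{-n}/(1+M_{n+1})$ secures.
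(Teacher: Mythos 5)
Your proof is correct and takes essentially the same route as the paper: the paper also fixes $x_0$ and sets $f(x)=\psi(d(x,x_0))$ for a positive bounded continuous $\psi$ ``decaying fast enough at infinity,'' leaving the choice of $\psi$ implicit. Your explicit choice $\psi(n)=2^{-n}/(1+M_{n+1})$ with linear interpolation and the annulus estimate simply fills in the details the paper omits.
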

\begin{proof}
Let $d$ be the metric on $X$, take $x_0\in X$, let $\psi: {\mathbb R}_+\to {\mathbb R}_{>0}$ be positive, bounded and continuous, and set $f(x)=\psi(d(x,x_0))$. The mass of every ball is finite, so, if the function $\psi$ decays fast enough at infinity, then
$f\in L_1(X, \nu)$.
\end{proof}

If the function $f$ is fibrewise continuous then the cocycle $\rho_f$ given by the formula
\[
\rho_f (g,x) \;=\; \frac{f(T_g x)}{f(x)}
\]
is fibrewise continuous as well.
Consequently, the sets $M_{f,1,\inv}^\infty$ and $M_{f,1,\erg}^\infty$ are Borel subsets of $\mathfrak M^\infty$, and so are the sets $\mathfrak M_{f,\inv}^\infty$ and $\mathfrak M_{f,\erg}^\infty$.

Without losing generality assume $\nu(f)=1$ and consider the ergodic decomposition
\begin{equation}
\label{ergfnu}
f\nu=\displaystyle\int\limits_{\mergtrhof} \eta\, d\check \nu (\eta)
\end{equation}
of the measure $f\nu$ in ${\mathfrak M}({\mathfrak T}, \rho_f)$.
Dividing by $f$, we now obtain an ergodic decomposition
\begin{equation}
\label{nufone} \nu=\displaystyle\int\limits_{{\mathfrak M}^{\infty}_{f,1,erg}} \eta\, d\tilde \nu (\eta)
\end{equation}
of the initial measure $\nu$; note that, by construction, the correspondence $\nu\to{\tilde \nu}$ is bijective.

Theorem \ref{ergdecstrcont} now implies the following
\begin{corollary}
\label{infdec}
Let $\mathfrak T$ be a measurable action of an inductively compact group $G$ on a standard Borel space $(X, \mathcal B)$.
Let $f:X\to {\mathbb R}_{>0}$ be measurable, positive and fibrewise continuous. Then:
\begin{enumerate}
\item The sets $\mathfrak M_{f, 1,\inv}^\infty (\mathfrak T)$ and $\mathfrak M_{f, 1,\erg}^\infty (\mathfrak T)$ are Borel subsets of $\mathfrak M^\infty (X)$.
\item Every measure $\eta \in \mathfrak M_{f, 1,\erg}^\infty (\mathfrak T)$ is indecomposable in $\mathfrak M_{f, 1,\inv}^\infty (\mathfrak T)$.
\item For any $\nu \in \mathfrak M_{f, 1,\inv}^\infty (\mathfrak T)$ there exists a unique Borel probability measure $\overline \nu$ on $\mathfrak M_{f, 1,\erg}^\infty (\mathfrak T)$ such that
\begin{equation}
\label{eq-ergdecf}
\nu = \displaystyle\int\limits_{\mathfrak M_{f, 1,\erg}^\infty (\mathfrak T)} \eta \, d\overline \nu(\eta).
\end{equation}
The bijective correspondence $\nu \to \overline \nu$ is a Borel isomorphism of Borel spaces $\mathfrak M_{f, 1,\inv}^\infty (\mathfrak T)$ and $\mathfrak M \left( \mathfrak M_{f, 1,\erg}^\infty (\mathfrak T) \right)$.
\end{enumerate}
\end{corollary}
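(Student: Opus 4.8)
The plan is to transport the whole statement through the map
$P_f$, exactly as indicated in the discussion preceding the Corollary. On the slice $\mathfrak M_{f,1}^\infty$ one has $\nu(f)=1$, so $P_f(\nu)=f\nu$; recall that $P_f$ is a Borel isomorphism of $\mathfrak M_{f,1}^\infty$ onto $\mathfrak M$ which restricts to Borel isomorphisms $\mathfrak M_{f,1,\inv}^\infty(\mathfrak T)\xrightarrow{\sim}{\mathfrak M}({\mathfrak T},\rho_f)$ and $\mathfrak M_{f,1,\erg}^\infty(\mathfrak T)\xrightarrow{\sim}\mergtrhof$. Since $f$ is fibrewise continuous, so is the cocycle $\rho_f$, and therefore Theorem \ref{ergdecstrcont} applies to $\rho_f$. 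I would then pull each of its conclusions back along $P_f$.

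For part (1), $\mathfrak M_{f,1}^\infty$ is Borel by definition and $P_f$ is a Borel isomorphism onto $\mathfrak M$; the images ${\mathfrak M}({\mathfrak T},\rho_f)$ and $\mergtrhof$ are Borel by Proposition \ref{rhoborel} and Proposition \ref{ergborel} respectively. Taking preimages under $P_f$ shows that $\mathfrak M_{f,1,\inv}^\infty(\mathfrak T)$ and $\mathfrak M_{f,1,\erg}^\infty(\mathfrak T)$ are Borel. For part (2), the key observation is that on the slice $\mathfrak M_{f,1}^\infty$ the map $P_f$ is affine: since the normalization $\nu(f)$ is constant and equal to $1$, it sends a convex combination $\alpha\eta_1+(1-\alpha)\eta_2$ to $\alpha P_f(\eta_1)+(1-\alpha)P_f(\eta_2)$ with the same weight $\alpha$. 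By Proposition \ref{equivergindec} every ergodic measure in ${\mathfrak M}({\mathfrak T},\rho_f)$ is indecomposable there. Hence, given a decomposition of some $\eta\in\mathfrak M_{f,1,\erg}^\infty(\mathfrak T)$ inside $\mathfrak M_{f,1,\inv}^\infty(\mathfrak T)$, applying $P_f$ produces a decomposition of the indecomposable measure $P_f(\eta)$, which forces the three images to coincide; injectivity of $P_f$ then gives $\eta=\eta_1=\eta_2$.

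For part (3), given $\nu\in\mathfrak M_{f,1,\inv}^\infty(\mathfrak T)$ I would apply Theorem \ref{ergdecstrcont} to $f\nu=P_f(\nu)\in{\mathfrak M}({\mathfrak T},\rho_f)$, obtaining the unique Borel probability measure $\check\nu:=\overline{f\nu}$ on $\mergtrhof$ that realizes the decomposition (\ref{ergfnu}). Setting $\overline\nu=(P_f^{-1})_*\check\nu$, a Borel probability measure on $\mathfrak M_{f,1,\erg}^\infty(\mathfrak T)$, and dividing the decomposition of $f\nu$ by $f$ yields the desired identity (\ref{nufone}), namely $\nu=\int_{\mathfrak M_{f,1,\erg}^\infty(\mathfrak T)}\eta\,d\overline\nu(\eta)$. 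Uniqueness follows by the reverse move: any competitor $\overline\nu'$ gives, after multiplication by $f$, a decomposition of $f\nu$ whose decomposing measure is $(P_f)_*\overline\nu'$, and uniqueness in Theorem \ref{ergdecstrcont}(3) forces $(P_f)_*\overline\nu'=\check\nu$, hence $\overline\nu'=\overline\nu$. Finally the correspondence $\nu\mapsto\overline\nu$ is the composite
\[
\mathfrak M_{f,1,\inv}^\infty(\mathfrak T)\xrightarrow{\ P_f\ }{\mathfrak M}({\mathfrak T},\rho_f)\xrightarrow{\ \overline{(\cdot)}\ }{\mathfrak M}(\mergtrhof)\xrightarrow{\ (P_f^{-1})_*\ }{\mathfrak M}\bigl(\mathfrak M_{f,1,\erg}^\infty(\mathfrak T)\bigr),
\]
a composition of Borel isomorphisms, and hence is itself a Borel isomorphism.

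The step I expect to require the most care is the passage ``dividing by $f$'' from the weak integral decomposition of $f\nu$ to that of $\nu$: one must verify that for every Borel set $A\subset X$ the identity $\nu(A)=\int (f^{-1}\eta')(A)\,d\check\nu(\eta')$ holds in the weak sense of (\ref{dec2}). Since $f^{-1}$ is only measurable and possibly unbounded, this cannot be obtained by a direct substitution; instead I would approximate $f^{-1}\mathbf 1_A$ by an increasing sequence of bounded nonnegative measurable functions and combine monotone convergence with the definition of the integral over the space of measures. Alongside this, I would check that $P_f^{-1}$ is Borel, so that the pushforward $(P_f^{-1})_*$ is well defined and is a Borel isomorphism of the corresponding spaces of measures on measures; these verifications are routine once the approximation argument is in place.
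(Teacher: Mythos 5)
Your proposal is correct and follows essentially the same route as the paper: the paper's proof is precisely the discussion preceding the Corollary, namely transporting Theorem \ref{ergdecstrcont} (applied to the fibrewise continuous cocycle $\rho_f$) through the Borel isomorphism $P_f$ between $\mathfrak M_{f,1}^\infty$ and $\mathfrak M$, with the decomposition of $\nu$ obtained by dividing the decomposition of $f\nu$ by $f$. Your added details --- affinity of $P_f$ on the slice $\{\nu(f)=1\}$ for part (2) and the monotone-convergence justification of the ``division by $f$'' step --- are correct elaborations of steps the paper treats as immediate.
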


Corollary \ref{infdec} immediately implies 
\begin{corollary}
Let $\mathfrak T$ be a measurable action of an inductively compact group $G$ on a standard Borel space $(X, \mathcal B)$, and let $\nu$ be a $\sigma$-finite $\mathfrak T$-invariant Borel measure on $X$ such that the space $L_1(X, \nu)$ contains a positive
Borel measurable fibrewise continuous function. Then the measure $\nu$ admits an ergodic decomposition.
\end{corollary}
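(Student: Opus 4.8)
The plan is to derive this final corollary directly from Corollary~\ref{infdec} by reducing the given $\sigma$-finite invariant measure to the normalized setting already handled. First I would invoke the hypothesis to select a positive Borel measurable fibrewise continuous function $f \in L_1(X, \nu)$, whose existence is precisely what is assumed. Since $\nu$ is $\sigma$-finite and $f$ is $\nu$-integrable, the total mass $\nu(f) = \int_X f\, d\nu$ is finite and strictly positive (the latter because $f > 0$ and $\nu$ is not the zero measure); rescaling $\nu$ by the constant $1/\nu(f)$ we may assume without loss of generality that $\nu(f) = 1$, so that $\nu \in \mathfrak M_{f,1,\inv}^\infty(\mathfrak T)$.

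With $\nu$ thus placed in the domain of Corollary~\ref{infdec}, the substantive content is immediate. The function $f$ satisfies the standing hypotheses of that corollary — positive, measurable, and fibrewise continuous — so part~(3) furnishes a unique Borel probability measure $\overline\nu$ on $\mathfrak M_{f,1,\erg}^\infty(\mathfrak T)$ with
\[
\nu = \displaystyle\int\limits_{\mathfrak M_{f,1,\erg}^\infty (\mathfrak T)} \eta \, d\overline\nu(\eta).
\]
By construction every $\eta$ in the support is an ergodic $\mathfrak T$-invariant $\sigma$-finite measure, and by part~(2) each such $\eta$ is indecomposable. This displayed identity is exactly an ergodic decomposition of $\nu$, which is what the corollary asserts.

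The only point requiring any genuine argument, rather than bookkeeping, is the passage from the normalized decomposition back to the original measure after undoing the rescaling: if $\nu_0 = \nu/\nu(f)$ is the normalized measure and $\nu_0 = \int \eta\, d\overline{\nu_0}(\eta)$, then $\nu = \int \eta\, d(\nu(f)\cdot\overline{\nu_0})(\eta)$, so the decomposing measure for $\nu$ is simply $\nu(f)\cdot\overline{\nu_0}$, a finite measure of total mass $\nu(f)$ on the space of ergodic components. I expect the main (and only mild) obstacle to be a notational one: the machinery of Corollary~\ref{infdec} is phrased for measures of the form $\nu(f)=1$ and produces a probability measure $\overline\nu$ on the ergodic components, whereas a general $\sigma$-finite $\nu$ yields a decomposition whose decomposing measure has total mass $\nu(f)$ and is therefore only finite, not a probability measure. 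I would note explicitly that the notion of ergodic decomposition for infinite measures asserted here is the mere existence of such a representation, not uniqueness — indeed, as the abstract already warns, the decomposition for infinite measures is genuinely non-unique because it depends on the auxiliary choice of $f$, and only the measure class on the space of projective measures is canonical (Theorem~\ref{ergdecinfpcl}). Since the corollary claims only existence, no uniqueness need be established, and the proof concludes once the displayed representation is exhibited.
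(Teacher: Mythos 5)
Your proof is correct and takes essentially the same route as the paper: the paper likewise deduces this corollary immediately from Corollary~\ref{infdec}, taking the positive fibrewise continuous $f\in L_1(X,\nu)$ and dividing by $f$ the decomposition (\ref{eq-ergdecf}) of the finite measure $f\nu$. Your explicit treatment of the normalization $\nu(f)=1$ and of the resulting decomposing measure of total mass $\nu(f)$ is exactly the bookkeeping the paper leaves implicit in its ``without loss of generality'' reduction.
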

Indeed, an ergodic decomposition is obtained by  taking the positive
Borel measurable fibrewise continuous function $f\in L_1(X, \nu)$,
and dividing by $f$ the decomposition (\ref{eq-ergdecf}) of the measure $f\nu$.
Such an ergodic decomposition is of course not unique and 
depends on the choice of the positive Borel measurable fibrewise continuous integrable function.

It is convenient to allow more general ergodic decompositions of infinite measures. 
Given a measure $\nu \in \mathfrak M^\infty (X)$ and a $\sigma$-finite Borel measure $\overline \nu$ on $\mathfrak M^\infty (X)$, the equality
\begin{equation}
\label{ergdecminf}
\nu = \displaystyle\int\limits_{\mathfrak M^\infty(X)} \eta \, d \overline \nu(\eta)
\end{equation}
will always be understood in a similar way as above, in the following weak sense. Given a Borel set $A$,
as above we consider the function
\[
\myint_A: \mathfrak M^\infty \to \mathbb R_{\ge 0} \cup \left\{ \infty \right\}
\]
defined by
\[
\myint_A(\eta) \;=\; \eta(A).
\]

The equality (\ref{ergdecminf}) means that for any Borel set $A$ satisfying
$
\nu(A) < +\infty
$ we have
$\myint_A \in L_1(\mathfrak M^\infty (X), \overline \nu)$ and
\[
\nu(A) \;=\; \displaystyle\int\limits_{\mathfrak M^\infty (X)} \eta (A) \, d\overline \nu (\eta).
\]

For a measure $\nu$ invariant under the action $\mathfrak T$, a decomposition
\begin{equation}
\label{ergdecinf}
\nu \;=\; \displaystyle\int\limits_{\mathfrak M^\infty (X)} \eta \, d\overline\nu (\eta)
\end{equation}
will be called an ergodic decomposition of $\nu$ if $\overline\nu$ is a $\sigma$-finite measure on $\mathfrak M^\infty (X)$ and $\overline\nu$-almost all measures $\eta \in \mathfrak M^\infty (X)$ are invariant and ergodic with respect to the action $\mathfrak T$. Such decomposition is, of course, far from unique: indeed, if
\[
\varphi: \, \mathfrak M^\infty (X) \to \mathbb R
\]
is a Borel measurable function such that $\varphi(\eta) > 0$ for $\overline \nu$-almost all $\eta$, then a new decomposition is obtained by writing
\[
\nu \;=\; \displaystyle\int\limits_{\mathfrak M^\infty(X)} \frac{\eta}{\varphi(\eta)}\, d\left( \varphi(\eta) \overline \nu(\eta)\right).
\]
\subsubsection{Projective measures and admissibility}
As before, we consider the space $X$ fixed and omit it from notation.
Introduce the projective space ${\mathbb P}\mathfrak M^\infty$, the quotient of $\mathfrak M^\infty$
by the projective equivalence relation $\sim$ defined in the usual way:
\[
\nu_1 \sim \nu_2 \quad \text{if} \quad \nu_1 = \lambda \nu_2 \quad \text{for some } \lambda>0.
\]
Let
\[
{\bf p}: \mathfrak M^\infty \to {\mathbb P} \mathfrak M^\infty
\]
be the natural projection map. Elements of ${\mathbb P} \mathfrak M^\infty$
will be called projective  measures; finiteness, invariance, quasi-invariance and  ergodicity
of projective measures are defined in  the obvious way, and we denote
$$
{\mathbb P}\mathfrak M_{\inv}^\infty (\mathfrak T)= {\bf p}(\mathfrak M_{\inv}^\infty (\mathfrak T)); \
{\mathbb P}\mathfrak M_{\erg}^\infty (\mathfrak T)={\bf p}(\mathfrak M_{\erg}^\infty (\mathfrak T)).
$$

The Borel structure in the space ${\mathbb P}\mathfrak M^\infty$ is defined in the usual way: a set $A\subset {\mathbb P} \mathfrak M^\infty$ is Borel if its preimage ${\bf p}^{-1}(A)$ is Borel.

{\bf Definition.}
A measure $\overline \nu\in {\mathfrak M}^{\infty}({\mathfrak M}^{\infty})$  is called {\it admissible} if
the  projection map ${\bf p}$ is  $\overline \nu$-almost surely a bijection.

For example, any measure supported on the set ${\mathfrak M}^{\infty}({\mathfrak M})$ or, for a positive measurable $f$, on the set
${\mathfrak M}^{\infty}({\mathfrak M}^{\infty}_{f,1})$,
is automatically admissible.

If the measure $\overline \nu$ in an  ergodic decomposition (\ref{ergdecinf}) is admissible, then the ergodic decomposition is called admissible as well.

The following theorem shows that for a given invariant sigma-finite measure $\nu$,
the measure class of the measure ${\bf p}_* \overline\nu$ is the same for all admissible ergodic decompositions  (\ref{ergdecinf}).

\begin{theorem}
\label{ergdecinfpcl}
Let $\mathfrak T$ be a measurable action of an inductively compact group $G$ on a standard Borel space $(X, \mathcal B)$, and let $\nu$ be a $\sigma$-finite $\mathfrak T$-invariant Borel measure on $X$ such that the space $L_1(X, \nu)$ contains a positive
Borel measurable fibrewise continuous function. Then there exists a measure class $PCL(\nu)$ on ${\mathbb P} \mathfrak M^\infty$ with the following properties.
\begin{enumerate}
\item For any $\widetilde \nu \in PCL(\nu)$ we have $\widetilde\nu({\mathbb P}{\mathfrak M}^{\infty} \setminus
{\mathbb P}{\mathfrak M}^{\infty}_{\erg}({\mathfrak T}))=0$.
\item For any admissible ergodic decomposition
\[
\nu = \displaystyle\int\limits_{\mathfrak M^\infty} \eta\, d\overline\nu (\eta)
\]
of the measure $\nu$ we have ${\bf p}_*\overline \nu \in PCL(\nu)$.
\item Conversely, for any $\sigma$-finite Borel measure ${\widetilde \nu} \in PCL(\nu)$ there exists a unique
admissible $\sigma$-finite Borel measure $\overline \nu$ on $\mathfrak M^\infty(X)$ such that
${\bf p}_*{\overline \nu}={\widetilde \nu}$
and
\[
\nu \;=\; \displaystyle\int\limits_{\mathfrak M^\infty (X)} \eta \, d\overline\nu (\eta).
\]
\item Let $\nu_1$ and $\nu_2$ be two $\mathfrak T$-invariant $\sigma$-finite Borel measures, each admitting a positive fibrewise continuous integrable function. Then $\nu_1 \ll \nu_2$ if and only if $PCL(\nu_1) \ll PCL(\nu_2)$ and $\nu_1\perp \nu_2$
if and only if $PCL(\nu_1)\perp PCL(\nu_2)$.
In particular, $PCL(\nu_1) = PCL(\nu_2)$ if and only if $[\nu_1]=[\nu_2]$.
\end{enumerate}
\end{theorem}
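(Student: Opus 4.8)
The plan is to reduce everything to the finite-measure theory via a single auxiliary function and to exploit the fact that the projection ${\bf p}$ annihilates fibrewise rescalings of the ergodic components. Fix a positive fibrewise continuous $f\in L_1(X,\nu)$, normalized so that $\nu(f)=1$; such an $f$ exists by hypothesis. Corollary \ref{infdec} applied to $f$ produces a canonical probability measure $\overline{\nu}^{(f)}$ on $\mathfrak{M}^\infty_{f,1,\erg}(\mathfrak{T})$ with $\nu=\int\eta\,d\overline{\nu}^{(f)}(\eta)$, and I would simply define $PCL(\nu)$ to be the measure class of ${\bf p}_*\overline{\nu}^{(f)}$. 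Property (1) is then immediate, since $\overline{\nu}^{(f)}$ lives on ergodic $\sigma$-finite measures and ${\bf p}$ carries these into $\mathbb{P}\mathfrak{M}^\infty_{\erg}(\mathfrak{T})$. The organizing principle for everything else is the identity ${\bf p}\circ R={\bf p}$ for any fibrewise rescaling $R(\eta)=\eta/\varphi(\eta)$, combined with the uniqueness clause of Corollary \ref{infdec}, which rigidifies the normalized decomposition.

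To prove well-definedness (independence of $f$) together with property (2), I would start from an arbitrary admissible ergodic decomposition $\nu=\int\eta\,d\overline{\nu}(\eta)$, the case of a second function being the special instance $\overline{\nu}=\overline{\nu}^{(f_2)}$, which is admissible as it is supported where ${\bf p}$ is injective. Testing against $f$ gives $\int\eta(f)\,d\overline{\nu}(\eta)=\nu(f)=1<\infty$, so $0<\eta(f)<\infty$ for $\overline{\nu}$-almost every $\eta$; hence the normalization $\Psi(\eta)=\eta/\eta(f)$ and the weight $c(\eta)=\eta(f)$ are defined a.e. A direct computation shows that $\Psi_*(c\cdot\overline{\nu})$ is a probability measure on $\mathfrak{M}^\infty_{f,1,\erg}(\mathfrak{T})$ decomposing $\nu$, so by the uniqueness in Corollary \ref{infdec} it equals $\overline{\nu}^{(f)}$. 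Since ${\bf p}\circ\Psi={\bf p}$, applying ${\bf p}_*$ yields ${\bf p}_*\overline{\nu}^{(f)}={\bf p}_*(c\cdot\overline{\nu})$; because $c>0$ a.e. and ${\bf p}$ is $\overline{\nu}$-a.e. injective by admissibility, $c\cdot\overline{\nu}$ and $\overline{\nu}$ push forward to measures in the same class. Thus ${\bf p}_*\overline{\nu}\in[{\bf p}_*\overline{\nu}^{(f)}]=PCL(\nu)$, which is both property (2) and, in the special instance, independence of the definition from the auxiliary function.

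For property (3) I would use the Borel isomorphism $\iota:{\bf p}(\mathfrak{M}^\infty_{f,1,\erg}(\mathfrak{T}))\to\mathfrak{M}^\infty_{f,1,\erg}(\mathfrak{T})$ inverse to ${\bf p}$, which sends a projective class to its unique $f$-normalized representative. Given $\widetilde{\nu}\in PCL(\nu)$, I put $c=d({\bf p}_*\overline{\nu}^{(f)})/d\widetilde{\nu}$ (positive a.e., as the two measures share a class), set $\sigma([\eta])=c([\eta])\,\iota([\eta])$, and define $\overline{\nu}=\sigma_*\widetilde{\nu}$; using $\iota\circ{\bf p}=\mathrm{id}$ one checks $\int\eta\,d\overline{\nu}=\nu$ and ${\bf p}_*\overline{\nu}=\widetilde{\nu}$, and $\overline{\nu}$ is admissible since ${\bf p}$ is injective on the range of $\sigma$. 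For uniqueness, any admissible $\overline{\nu}$ with projection $\widetilde{\nu}$ has, by admissibility, the form $\sigma_*\widetilde{\nu}$ for a section $\sigma([\eta])=c([\eta])\iota([\eta])$; the normalization identity $\Psi_*(c\cdot\overline{\nu})=\overline{\nu}^{(f)}$ from the previous step rewrites as $\iota_*(c\cdot\widetilde{\nu})=\overline{\nu}^{(f)}$, and since $\iota$ is injective this forces $c\cdot\widetilde{\nu}={\bf p}_*\overline{\nu}^{(f)}$, pinning down $c$, hence $\sigma$, hence $\overline{\nu}$, up to $\widetilde{\nu}$-null sets.

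Finally, for property (4) I would pick a common auxiliary function, e.g. $f=\min(f_1,f_2)$, which is positive, fibrewise continuous and integrable for both $\nu_1,\nu_2$, and normalize $\nu_i(f)=1$. Since $f>0$, one has $\nu_1\ll\nu_2$ iff $f\nu_1\ll f\nu_2$ and $\nu_1\perp\nu_2$ iff $f\nu_1\perp f\nu_2$, with $f\nu_i\in\mathfrak{M}(\mathfrak{T},\rho_f)$. I would then establish the general correspondence that for measures in $\mathfrak{M}(\mathfrak{T},\rho_f)$ and their ergodic-decomposition measures $\overline{(f\nu_i)}=\pi_*(f\nu_i)$, absolute continuity and singularity pass back and forth: the $\ll$ directions follow at once from $\mu(A)=\int\eta(A)\,d\overline{\mu}(\eta)$, and the $\perp$ directions from the fibrewise structure $\mu_i(\pi^{-1}S)=\overline{\mu}_i(S)$ furnished by parts (1)--(2) of Theorem \ref{ergdecstrcont}. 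Transporting through the fixed Borel isomorphism $J=P_f\circ\iota$ between ${\bf p}(\mathfrak{M}^\infty_{f,1,\erg}(\mathfrak{T}))$ and $\mathfrak{M}_{\erg}(\mathfrak{T},\rho_f)$, which satisfies ${\bf p}_*\overline{\nu_i}^{(f)}=J^{-1}_*\overline{(f\nu_i)}$ and does not depend on $\nu_i$, converts these into the asserted relations between $PCL(\nu_1)$ and $PCL(\nu_2)$; the final ``in particular'' clause is the case of mutual absolute continuity. I expect the main obstacle to be precisely the rigidity step of the second paragraph, proving that the normalized rescaling is forced to equal the canonical $\overline{\nu}^{(f)}$ and that ${\bf p}_*$ therefore preserves the class across all admissible decompositions, since this is where admissibility, the scaling freedom, and the uniqueness in Corollary \ref{infdec} must be balanced against one another.
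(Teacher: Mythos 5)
Your proposal is correct and follows essentially the same route as the paper: both define $PCL(\nu)$ as the class of the projection of the canonical $f$-normalized decomposition from Corollary \ref{infdec}, prove invariance of this class by reweighting an arbitrary admissible decomposition with the density $\eta(f)/\nu(f)$ and normalizing the fibers (your $\Psi_*(c\cdot\overline\nu)$ is exactly the paper's $(P_f)_*\check\nu$ up to the isomorphism given by multiplication by $f$), and then invoke the uniqueness of the ergodic decomposition for probability measures with cocycle $\rho_f$ from Theorem \ref{ergdecstrcont}, transporting measure classes through the Borel isomorphism induced by $P_f$. Your handling of parts (3) and (4) --- the explicit section $\sigma = c\cdot\iota$ and the common auxiliary function $\min(f_1,f_2)$ --- spells out details that the paper leaves implicit, but the underlying argument is the same.
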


\subsubsection{Infinite measures all whose ergodic components are finite}

Consider the set ${\mathbb P}{\mathfrak M}$ of {\it finite} projective measures and let $\nu$ be a sigma-finite invariant measure
such that $PCL(\nu)$ is supported on ${\mathbb P}{\mathfrak M}$. In this case take an arbitrary ergodic decomposition
\[
\nu = \displaystyle\int\limits_{\mathfrak M^\infty} \eta\, d\tilde\nu (\eta)
\]
and deform it by writing
\[
\nu = \displaystyle\int\limits_{\mathfrak M^\infty} \frac{\eta}{\eta(1)}\, \eta(1)d\tilde\nu (\eta).
\]

In this way we obtain an ergodic decomposition
$$
\nu = \displaystyle\int\limits_{\mathfrak M_{erg}({\mathfrak T})}{\eta}d\overline \nu (\eta),
$$
where the measure $\overline \nu$, supported on $\mathfrak M_{erg}({\mathfrak T})$, is uniquely defined
by $\nu$.

\noindent{\bf Acknowledgements.} Grigori Olshanski posed the problem to me; I am deeply grateful to him.
I am deeply grateful to Klaus Schmidt for kindly explaining to me the construction of Section 5 in \cite{GreSchm}
and for many very helpful discussions. I am deeply grateful to Yves  Coud{\`e}ne for  
kind explanations of Souslin theory. I am deeply grateful to Vadim Kaimanovich and Sevak Mkrtchyan
for useful discussions. I am deeply grateful to Nikita Kozin for typesetting parts of the manuscript.
Part of this work was done while I was visiting the Institut Mittag-Leffler in Djursholm,
the Erwin Schroedinger Institute in Vienna and the
Max Planck Institute in Bonn; I am deeply grateful to these institutions for their hospitality.

This work was supported in part by an Alfred P. Sloan Research Fellowship, by the
Dynasty Foundation Fellowship, by Grants MK-4893.2010.1 and MK-6734.2012.1 of
the President of the Russian Federation,
by the Programme on Dynamical Systems and Mathematical Control Theory of the
Presidium of the Russian Academy of Sciences,
by the RFBR-CNRS grant 10-01-93115, by the RFBR grant 11-01-00654 and 
by the Edgar Odell Lovett Fund at Rice University.

\section{Averaging operators.}
\subsection{Averaging over orbits of compact groups.}
Let $K$ be a compact group endowed with the Haar measure $\mu_K$ and let $\mathfrak T_K$ be a measurable action of $K$ on a standard Borel space $(X, \mathcal B)$. Let $\rho$ be a positive multiplicative real-valued measurable cocycle over the action $\mathfrak T_K$. Let $\mathbb B(X)$ be the space of bounded measurable functions on $X$ endowed with the Tchebychev metric. Introduce an operator $\mathcal A_K^\rho: \, \mathbb B(X) \to \mathbb B(X)$ by the formula
\begin{equation} \label{arhok}
\left( \mathcal A_K^\rho f\right) (x)=
\begin{cases}
\frac{\displaystyle \int\limits_K f(T_k x) \rho(k,x)\, d\mu_K (k)}{\displaystyle\int\limits_K \rho (k,x) \, d\mu_K(k)}
&\text{if $\displaystyle\int\limits_K \rho (k,x) \, d\mu_K(k)<+\infty$}\\
                  0,&\text{if $\displaystyle\int\limits_K \rho (k,x) \, d\mu_K(k)=+\infty$.}
\end{cases}
\end{equation}

It is clear that $\mathcal A_K^\rho$ is a positive contraction on the space $\mathbb B(X)$.

Let $\mathcal I_K$ be the $\sigma$-algebra of $K$-invariant subsets of $X$, and, for a given measure $\nu$, let $\mathcal I_K^\nu$ be the completion of $\mathcal I_K$ with respect to $\nu$.

As before, $\mathfrak M(\mathfrak T_K, \rho)$ stands for the space of Borel probability measures on $X$ with Radon-Nikodym cocycle $\rho$ with respect to the action $\mathfrak T_K$.

\begin{lemma}\label{avcomp}
 For any $\nu \in \mathfrak M(\mathfrak T_K, \rho)$  and any $f\in L_1(X, \nu)$ both integrals on the right-hand side of (\ref{arhok}) are
 $\nu$-almost surely finite. The extended operator $\mathcal A_K^\rho$ is a positive contraction of $L_1(X, \nu)$, and
 we have the $\nu$-almost sure equality
\begin{equation}
\label{condexpcomp}
\mathcal A_K^\rho f \;=\; \mathbb E(f \;\big|\; \mathcal I_K^\nu ).
\end{equation}
\end{lemma}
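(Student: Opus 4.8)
The plan is to rewrite the defining property of $\nu \in \mathfrak{M}(\mathfrak{T}_K,\rho)$ as a change-of-variables formula and then to exploit the bi-invariance and inversion-invariance of the Haar measure $\mu_K$ on the compact group $K$. Since $\frac{d(\nu\circ T_g)}{d\nu}(x)=\rho(g,x)$ and $\nu\circ T_g=(T_{g^{-1}})_*\nu$, for every $g\in K$ and every nonnegative Borel function $\phi$ one has the identity $\int_X \phi(x)\rho(g,x)\,d\nu(x)=\int_X \phi(T_{g^{-1}}y)\,d\nu(y)$. Taking $\phi\equiv 1$ gives $\int_X \rho(g,x)\,d\nu(x)=1$ for every $g$, and taking $\phi(x)=|f(T_gx)|$ gives $\int_X |f(T_gx)|\rho(g,x)\,d\nu(x)=\|f\|_{1}$. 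This single identity will drive the whole proof.

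First I would establish the finiteness claim (part 1). Writing $D(x)=\int_K \rho(k,x)\,d\mu_K(k)$ for the denominator, Tonelli's theorem and the identity above give $\int_X D\,d\nu=\int_K\big(\int_X\rho(k,x)\,d\nu\big)\,d\mu_K=1$, so $D$ is finite $\nu$-a.e.; as $\rho>0$ and $\mu_K$ is a probability measure, $D>0$ everywhere. The same argument applied to $|f|$ shows $\int_X\big(\int_K|f(T_kx)|\rho(k,x)\,d\mu_K\big)\,d\nu=\|f\|_1<\infty$, so the numerator is $\nu$-a.s. absolutely convergent, completing part 1 and showing in passing that $\int_X \mathcal{A}_K^\rho|f|\,d\nu=\|f\|_1$. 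Positivity of $\mathcal{A}_K^\rho$ is immediate, and since $|\mathcal{A}_K^\rho f|\le \mathcal{A}_K^\rho|f|$ pointwise this last equality yields the $L_1$-contraction property (part 2).

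The heart of the matter is the conditional-expectation identity (part 3), which I would prove in two steps. For measurability, the cocycle identity $\rho(kh,x)=\rho(k,T_hx)\rho(h,x)$ together with the right-invariance of $\mu_K$ shows that, for $h\in K$, both the numerator and $D$ transform under $T_h$ by the same factor $\rho(h,x)^{-1}$; hence their quotient $\mathcal{A}_K^\rho f$ is genuinely $K$-invariant (the exceptional set $\{D=\infty\}$ is itself $K$-invariant and $\nu$-null, so setting the quotient to $0$ there causes no harm), and so $\mathcal{A}_K^\rho f$ is $\mathcal{I}_K$-measurable and a fortiori $\mathcal{I}_K^\nu$-measurable. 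For the averaging property I take a bounded $K$-invariant $\psi$, expand $\int_X \psi\,\mathcal{A}_K^\rho f\,d\nu$, apply Fubini (legitimate because the corresponding absolute integral is at most $\|\psi\|_\infty\|f\|_1$), and use the change-of-variables identity inside with $\phi(x)=\psi(x)f(T_kx)/D(x)$. Invariance of $\psi$ and the transformation rule $D(T_{k^{-1}}y)=D(y)/\rho(k^{-1},y)$ turn the inner integral into $\int_X \psi(y)f(y)\rho(k^{-1},y)/D(y)\,d\nu(y)$; integrating over $k$ and using the inversion-invariance of $\mu_K$ to recognize $\int_K \rho(k^{-1},y)\,d\mu_K(k)=D(y)$ cancels the factor $1/D(y)$ and leaves exactly $\int_X \psi f\,d\nu$. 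Together, measurability and the averaging identity characterize $\mathcal{A}_K^\rho f$ as $\mathbb{E}(f\mid\mathcal{I}_K^\nu)$.

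The main obstacle is the bookkeeping in this last computation: keeping the right-action and pushforward conventions straight (which is precisely what produces $\rho(k^{-1},\cdot)$ rather than $\rho(k,\cdot)$ and makes the inversion-invariance of Haar measure the crucial input), and justifying the two interchanges of integration. The nonnegative versions needed for part 1 and for $\int_X\mathcal{A}_K^\rho|f|\,d\nu$ require only Tonelli, and it is their finiteness—guaranteed by the uniform bound $\int_X\rho(k,x)\,d\nu(x)=1$—that licenses the Fubini step for the signed integrand.
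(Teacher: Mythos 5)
Your proof is correct in substance and follows essentially the same route as the paper's: both arguments rest on the identity $\int_X\rho(k,x)\,d\nu(x)=1$ for each fixed $k$, Tonelli/Fubini on $K\times X$, the cocycle-induced $K$-invariance of $\mathcal A_K^\rho f$ (via right-invariance of $\mu_K$), and the characterization of conditional expectation by integration against $K$-invariant test sets. The only real difference is organizational: where you substitute $x\mapsto T_{k^{-1}}y$ and invoke inversion-invariance of $\mu_K$ to resum $\int_K\rho(k^{-1},y)\,d\mu_K(k)=D(y)$ and cancel the denominator, the paper instead notes that the ratio $\mathcal A_K^\rho\varphi$ is constant in $k$, pulls it out of the weighted average $\int_K(\cdot)\,\rho(k,x)\,d\mu_K(k)$, and then integrates both $\varphi$ and $\mathcal A_K^\rho\varphi$ against $d\nu\circ T_k\,d\mu_K$ over an invariant set $A$; the two computations are interchangeable.

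One slip should be fixed. In your second paragraph you assert that part 1 shows ``in passing'' that $\int_X\mathcal A_K^\rho|f|\,d\nu=\|f\|_1$, and you then deduce the $L_1$-contraction from it. But part 1 computes $\int_X N_{|f|}\,d\nu=\|f\|_1$, where $N_{|f|}(x)=\int_K|f(T_kx)|\,\rho(k,x)\,d\mu_K(k)$ is the \emph{numerator}, whereas $\mathcal A_K^\rho|f|=N_{|f|}/D$, and $D$ is not identically $1$ (only $\int_X D\,d\nu=1$). The equality $\int_X\mathcal A_K^\rho|f|\,d\nu=\|f\|_1$ is true, but it is precisely your averaging identity with $\psi\equiv1$ applied to $|f|$ --- the Tonelli version of the part-3 computation, which your closing paragraph correctly identifies as the step that must come first, since it is also what licenses the Fubini interchange for signed $f$. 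So the contraction must be deduced \emph{after}, not before, that Tonelli computation; with that reordering your argument is complete and matches the paper's.
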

{\bf Remark.} Note that the left-hand side of (\ref{condexpcomp})
does not depend on the measure $\nu$, only on the cocycle $\rho$. This simple observation will
be important in what follows.

\begin{proof}
Let $\rho_x: K \to \mathbb R$ be defined by the formula
\[
\rho_x(k) = \rho(k,x).
\]
From the Fubini Theorem it immediately follows that for $\nu$-almost every $x \in X$ we have $\rho_x \in L_1(K,\mu_K)$.
Now take $\varphi \in L_1(X,\nu)$ and set
\[
\varphi_x (k) = \varphi(T_k x) \rho(k,x).
\]

\begin{proposition}
For $\nu$-almost every $x \in X$ we have $\varphi_x \in L_1 (K,\mu_K)$.
\end{proposition}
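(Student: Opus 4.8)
The plan is to deduce the almost-sure finiteness of $\varphi_x$ from the finiteness of a single double integral, exactly as in the application of Fubini's theorem invoked just above in the proof of Lemma \ref{avcomp}. Since $\rho$ is positive, we have $|\varphi_x(k)| = |\varphi(T_k x)|\,\rho(k,x)$, so it suffices to show that the nonnegative function $(k,x)\mapsto |\varphi(T_k x)|\,\rho(k,x)$ is integrable on the product space $K\times X$ with respect to $\mu_K\otimes\nu$. Indeed, once the double integral
\[
I=\int\limits_X\int\limits_K |\varphi(T_k x)|\,\rho(k,x)\,d\mu_K(k)\,d\nu(x)
\]
is known to be finite, Fubini's theorem forces the inner integral $\int_K|\varphi(T_k x)|\,\rho(k,x)\,d\mu_K(k)$ to be finite for $\nu$-almost every $x$, which is precisely the assertion $\varphi_x\in L_1(K,\mu_K)$.

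To compute $I$, I would first invoke Tonelli's theorem to interchange the order of integration, writing $I=\int_K\big(\int_X|\varphi(T_k x)|\,\rho(k,x)\,d\nu(x)\big)\,d\mu_K(k)$. The crux is then the evaluation of the inner integral for each fixed $k\in K$. Here the defining property of $\nu\in\mathfrak M(\mathfrak T_K,\rho)$ enters: since $\rho(k,\cdot)$ is a version of the Radon--Nikodym derivative $d(\nu\circ T_k)/d\nu$, the $\rho$-weighted integral is invariant, and a change of variables gives
\[
\int\limits_X|\varphi(T_k x)|\,\rho(k,x)\,d\nu(x)=\int\limits_X|\varphi(y)|\,d\nu(y)=\|\varphi\|_{L_1(X,\nu)}
\]
for every $k\in K$. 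Because $\mu_K$ is a probability measure, integrating over $K$ yields $I=\|\varphi\|_{L_1(X,\nu)}$, which is finite by the hypothesis $\varphi\in L_1(X,\nu)$, and this completes the argument.

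The step I expect to require the most care is the substitution identity in the display above. Starting from the definition, for every Borel $A$ and every $k$ one has $\nu(T_k A)=\int_A\rho(k,x)\,d\nu(x)$; rewriting $\mathbf 1_{T_k A}(y)=\mathbf 1_A(T_{k^{-1}}y)$ and passing from indicators to nonnegative measurable functions by monotone convergence gives $\int_X h(T_{k^{-1}}y)\,d\nu(y)=\int_X h(x)\,\rho(k,x)\,d\nu(x)$; applying this with $h=|\varphi|\circ T_k$ and using the group law $T_k T_{k^{-1}}=\mathrm{id}$ produces the claimed equality. The only other point needing attention is the joint measurability of $(k,x)\mapsto|\varphi(T_k x)|\,\rho(k,x)$ required to apply Tonelli and Fubini; this is immediate, since $\mathfrak T_K$ is a Borel action and $\rho$ is a measurable cocycle, so the integrand is a product of measurable functions on $K\times X$. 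One may add a brief remark that the conclusion is independent of the chosen Borel representative of $\varphi$: applying the same quasi-invariance argument to the indicator of the null set where two representatives differ shows that, for $\nu$-almost every $x$, the functions $\varphi(T_k x)$ agree for $\mu_K$-almost every $k$.
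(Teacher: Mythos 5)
Your proof is correct and is essentially the paper's own argument: both reduce the claim to the finiteness of the double integral $\int_K\int_X |\varphi(T_kx)|\,\rho(k,x)\,d\nu(x)\,d\mu_K(k)$, evaluate the inner integral as $\|\varphi\|_{L_1(X,\nu)}$ for each fixed $k$ via the Radon--Nikodym property of $\nu\in\mathfrak M(\mathfrak T_K,\rho)$, and conclude by Fubini. The only cosmetic difference is that the paper packages the weight $\rho(k,x)$ into an auxiliary probability measure $\tilde\nu$ on $K\times X$ and states that $\tilde\varphi(k,x)=\varphi(T_kx)$ lies in $L_1(K\times X,\tilde\nu)$, whereas you work with the iterated integral directly.
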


 Consider the product space $K\times X$ endowed with the measure $\tilde \nu$ defined by the formula
\begin{equation}
\label{nutilde}
d\tilde \nu = \rho (k,x) \, d\mu_K \, d\nu.
\end{equation}

For any fixed $k_0 \in K$ we have
\[
\displaystyle\int_X \rho(k_0, x) \, d\nu(x) = 1,
\]
whence $\tilde \nu$ is a probability measure.

For any $k \in K$ we have
\[
\displaystyle\int |\varphi(T_k x) | \cdot \rho(k,x) \, d\nu(x) = \displaystyle\int | \varphi(x) | \, d\nu(x),
\]
so the function $\tilde \varphi(k,x) = \varphi(T_k x)$ satisfies $\tilde\varphi \in L_1 (K \times X, \tilde \nu)$.
The claim of the Proposition follows now from the Fubini Theorem.

We return to the proof of Lemma \ref{avcomp}.
First, the cocycle property implies that
\[
{\mathcal A}_K^\rho \varphi (x) = {\mathcal A}_K^\rho \varphi (T_k x)
\]
for any $k \in K$.
By the Fubini Theorem applied to the space $K \times X$ endowed with the measure $\tilde\nu$, for any Borel subset $A \subset X$ and any $\tilde \varphi \in L_1(K \times X, \tilde \nu)$ we have:
\begin{multline}
\displaystyle\int\limits_A\displaystyle\int\limits_K
\tilde\varphi(k,x) \, \rho(k,x) \, d\mu_K (k) \, d\nu(x) =\\
=\displaystyle\int\limits_A\displaystyle\int\limits_K
\left(
\frac{\displaystyle\int\limits_K \tilde\varphi(k,x) \, \rho(k,x) \, d\mu_K (k)}{\displaystyle\int\limits_K
\rho(k,x) \, d\mu_K (k) }
\right)
\rho(k,x) \, d\mu_K (k)\, d\nu(x).
\end{multline}
Now take $\varphi \in L_1(X,\nu)$ and apply the above formula to the function
\[
\tilde\varphi(k,x) = \varphi(T_kx)
\]
(note here that $\tilde\varphi \in L_1(K\times X, \tilde\nu)$ by Fubini's theorem). We obtain
\[
\displaystyle\int\limits_K \left( \, \displaystyle\int\limits_A \varphi(T_k x) \, d\nu \circ T_k(x) \right) \, d\mu_K(k) =
\displaystyle\int\limits_K \left( \, \displaystyle\int\limits_A {\mathcal A}_K^\rho \varphi(x) \, d\nu \circ T_k (x) \right) \, d\mu_K (k).
\]

Now let the set $A$ be $K$-invariant. Recalling that the function ${\mathcal A}_K^\rho \varphi$ is $K$-invariant as well, we finally obtain
\[
\displaystyle\int\limits_A \varphi(x) \, d\nu(x) = \displaystyle\int_A {\mathcal A}_K^\rho \varphi(x) \, d\nu(x),
\]
and the Lemma is proved completely.
\end{proof}

 \subsection{Averaging over orbits of inductively compact groups.}
As above, let
\[
G = \bigcup_{n=1}^{+\infty} K(n), \ K(n)\subset K(n+1)
\]
be an inductively compact group, and let $\mu_{K(n)}$ denote the Haar measure on the group $K(n)$.
Assume we are given a measurable  action ${\mathfrak T}$ of $G$
on a standard Borel space $(X, \mathcal{B})$.
Let $\mathcal{I}_{K(n)}$ stand for the $\sigma$-algebra of $K(n)$ -- invariant measurable subsets of X, and  let $\mathcal{I}_G$ be the $\sigma$-algebra of $G$-invariant subsets of $X$. Clearly, we have
\[
\mathcal{I}_G = \bigcap_{n=1}^{\infty} \mathcal{I}_{K(n)}.
\]

Let $\rho$ be a positive measurable multiplicative cocycle over the action ${\mathfrak T}$.

The  averaging operators $\mathcal{A}_{K(n)}^\rho$, $n\in {\mathbb N}$, are defined,
for a bounded measurable function $\varphi$ on $X$, by formula (\ref{arhok}).
For brevity, we shall sometimes write $\mathcal{A}_{n}^\rho=\mathcal{A}_{K(n)}^\rho$.

Now take $\nu\in {\mathfrak M}({\mathfrak T}, \rho)$ and let
$\mathcal{I}^{\nu}_{K(n)}$, $\mathcal{I}^{\nu}_{G}$ be the completions of the sigma-algebras $\mathcal{I}_{K(n)}$, $\mathcal{I}_{G}$ with respect to the measure $\nu$.

By the results of the previous subsection, for any $\varphi\in L_1(X, \nu)$, we have the $\nu$-almost sure equality
\[
\mathcal{A}_n^\rho \varphi = \mathbb{E} (\varphi \, \big| \, \mathcal{I}^{\nu}_{K(n)} ).
\]

Since $\mathcal{I}^{\nu}_{K(n+1)} \subset \mathcal{I}^{\nu}_{K(n)}$, the reverse martingale convergence theorem implies the following
\begin{proposition} For any $\varphi \in L_1(X,\nu)$ we have
\[
\lim\limits_{n\to\infty}\mathcal{A}_n^\rho \varphi=\mathbb{E} (\varphi \, \big| \, \mathcal{I}^{\nu}_G )
\]
both $\nu$-almost surely and in $L_1(X,\nu)$.
\end{proposition}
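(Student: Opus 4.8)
The plan is to read the limit off the reverse martingale convergence theorem and then to identify the resulting tail $\sigma$-algebra with $\mathcal{I}^\nu_G$. By Lemma \ref{avcomp} we already know that $\mathcal{A}_n^\rho \varphi = \mathbb{E}(\varphi \mid \mathcal{I}^\nu_{K(n)})$ $\nu$-almost surely for every $\varphi \in L_1(X,\nu)$ and every $n$. Since $K(n) \subset K(n+1)$, every $K(n+1)$-invariant set is a fortiori $K(n)$-invariant, so $\mathcal{I}_{K(n+1)} \subset \mathcal{I}_{K(n)}$ and hence, after completion, $\mathcal{I}^\nu_{K(n+1)} \subset \mathcal{I}^\nu_{K(n)}$: the sequence of $\sigma$-algebras is decreasing. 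The reverse martingale convergence theorem then yields, both $\nu$-almost surely and in $L_1(X,\nu)$,
\[
\lim_{n\to\infty} \mathcal{A}_n^\rho \varphi = \mathbb{E}\Big(\varphi \,\Big|\, \bigcap_{n=1}^\infty \mathcal{I}^\nu_{K(n)}\Big),
\]
the tail $\sigma$-algebra being $\nu$-complete as an intersection of $\nu$-complete $\sigma$-algebras.

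It then remains to establish the key identity $\bigcap_{n} \mathcal{I}^\nu_{K(n)} = \mathcal{I}^\nu_G$ modulo $\nu$-null sets; this is the heart of the matter. The inclusion $\mathcal{I}^\nu_G \subset \bigcap_n \mathcal{I}^\nu_{K(n)}$ is immediate from $\mathcal{I}_G = \bigcap_n \mathcal{I}_{K(n)} \subset \mathcal{I}_{K(n)}$. For the reverse inclusion I would fix $A \in \bigcap_n \mathcal{I}^\nu_{K(n)}$ and manufacture a genuinely $G$-invariant Borel set that is $\nu$-equivalent to $A$. The device is to apply the averaging operators to the indicator $\mathbf{1}_A$. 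On one hand, since $A \in \mathcal{I}^\nu_{K(n)}$, the function $g_n := \mathcal{A}_n^\rho \mathbf{1}_A = \mathbb{E}(\mathbf{1}_A \mid \mathcal{I}^\nu_{K(n)})$ equals $\mathbf{1}_A$ $\nu$-almost surely; on the other hand, by the cocycle identity $\mathcal{A}_n^\rho\varphi(x)=\mathcal{A}_n^\rho\varphi(T_kx)$ established in the proof of Lemma \ref{avcomp}, each $g_n$ is genuinely, that is everywhere, $K(n)$-invariant. Discarding the single null set $N = \bigcup_n \{g_n \neq \mathbf{1}_A\}$, the pointwise limit $g(x) := \lim_n g_n(x)$ exists off $N$ and equals $\mathbf{1}_A$ there.

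The final step is to arrange that $g$ is genuinely $G$-invariant. Here I would observe that the Borel set $E$ on which $\lim_n g_n(x)$ exists is itself genuinely $G$-invariant: for $k \in K(m)$ one has $g_n \circ T_k = g_n$ identically for all $n \ge m$, so the sequences $(g_n(x))_{n\ge m}$ and $(g_n(T_kx))_{n\ge m}$ coincide term by term and converge or diverge together, whence $T_k^{-1}E = E$ for every $k \in K(m)$ and every $m$, i.e. for every $k \in G$. Setting $g = 0$ on the $G$-invariant Borel set $X \setminus E$ makes $g$ everywhere $G$-invariant, $[0,1]$-valued, and equal to $\mathbf{1}_A$ off $N$; consequently the genuinely $G$-invariant Borel set $\{g \ge 1/2\}$ is $\nu$-equivalent to $A$, so $A \in \mathcal{I}^\nu_G$. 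Substituting $\mathcal{I}^\nu_G$ for the tail $\sigma$-algebra in the displayed limit then finishes the proof.

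I expect the passage from almost invariance to honest $G$-invariance to be the only delicate point. Because $G$ is uncountable, the condition that $\mathbf{1}_A$ be almost invariant under each group element is an uncountable family of almost-sure identities and does not by itself furnish an invariant Borel representative; it is precisely the everywhere $K(n)$-invariance of the averaged functions $g_n$ — a feature of the averaging operator rather than of a generic almost-invariant function — that allows one to exhibit the genuinely $G$-invariant null set $X \setminus E$ and thereby produce an honest invariant version. Once this is in hand, the rest is the standard backward martingale argument together with the tautology that conditional expectations with respect to $\sigma$-algebras agreeing modulo $\nu$-null sets coincide.
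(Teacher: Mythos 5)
Your proof is correct and takes essentially the same approach as the paper: Lemma \ref{avcomp} identifies $\mathcal{A}_n^\rho\varphi$ with $\mathbb{E}(\varphi\,|\,\mathcal{I}^{\nu}_{K(n)})$, these $\sigma$-algebras decrease, and the reverse martingale convergence theorem yields the limit. Where the paper treats the identification of $\bigcap_{n}\mathcal{I}^{\nu}_{K(n)}$ with $\mathcal{I}^{\nu}_{G}$ as immediate (given $\mathcal{I}_G=\bigcap_n\mathcal{I}_{K(n)}$), you prove it explicitly; your argument is sound (and in fact mirrors the device used in the paper's own proof of Proposition \ref{equivergindec}, where almost-invariant sets are given genuinely invariant versions via the averages), up to the routine preliminary step of replacing $A\in\bigcap_{n}\mathcal{I}^{\nu}_{K(n)}$ by a Borel representative --- available by definition of the completion --- so that $\mathcal{A}_n^\rho\mathbf{1}_A$ is everywhere defined and everywhere $K(n)$-invariant.
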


Introduce the averaging operator $\mathcal{A}_\infty^\rho$ by setting
\[
\mathcal{A}_\infty^\rho \varphi(x) = \lim\limits_{n \to \infty} \mathcal{A}_n^\rho \varphi(x).
\]
If for a given $x\in X$  the sequence $\mathcal{A}_n^\rho \varphi(x)$ fails to converge, then the value $\mathcal{A}_\infty^\rho \varphi(x)$ is not defined.
From the definitions and the Reverse Martingale Convergence Theorem we immediately have
\begin{proposition}
A measure $\eta\in {\mathfrak M}({\mathfrak T}, \rho)$ is ergodic of and only if for
any $\varphi \in L_1(X, \eta)$ we have $$\mathcal{A}_\infty^\rho \varphi(x) = \displaystyle\int_X \varphi \, d\eta$$
almost everywhere with respect to the measure $\eta$.
\end{proposition}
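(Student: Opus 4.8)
The plan is to deduce the statement directly from the immediately preceding proposition, which identifies the limit operator $\mathcal{A}_\infty^\rho$ with a conditional expectation. For $\eta \in \mtrho$ and $\varphi \in L_1(X, \eta)$ the reverse martingale convergence theorem gives the $\eta$-almost sure equality $\mathcal{A}_\infty^\rho \varphi = \mathbb{E}(\varphi \mid \mathcal{I}_G^\eta)$. Thus the whole statement reduces to the assertion that $\eta$ is ergodic if and only if $\mathbb{E}(\varphi \mid \mathcal{I}_G^\eta) = \int_X \varphi \, d\eta$ for every $\varphi \in L_1(X, \eta)$; and this last condition is in turn equivalent to the $\eta$-triviality of the $\sigma$-algebra $\mathcal{I}_G^\eta$, i.e. to the statement that every set in $\mathcal{I}_G^\eta$ has $\eta$-measure $0$ or $1$.

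First I would record that ergodicity of $\eta$ --- defined as the requirement that every $G$-invariant Borel set have $\eta$-measure $0$ or full measure --- is exactly the $\eta$-triviality of $\mathcal{I}_G^\eta$. Indeed, $\mathcal{I}_G$ is the $\sigma$-algebra of $G$-invariant Borel sets, so ergodicity says each element of $\mathcal{I}_G$ has measure in $\{0,1\}$; passing to the completion only adjoins $\eta$-null sets, each of which differs from a member of $\mathcal{I}_G$ by a null set and hence again has measure in $\{0,1\}$.

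For the forward direction, assume $\eta$ ergodic, so $\mathcal{I}_G^\eta$ is $\eta$-trivial. Then any $\mathcal{I}_G^\eta$-measurable function is $\eta$-almost everywhere constant; in particular $\mathbb{E}(\varphi \mid \mathcal{I}_G^\eta)$ is $\eta$-a.e. constant, and since conditional expectation preserves the integral this constant equals $\int_X \varphi \, d\eta$. Combined with the identification above this yields $\mathcal{A}_\infty^\rho \varphi = \int_X \varphi \, d\eta$ $\eta$-a.e. For the converse, suppose $\mathcal{A}_\infty^\rho \varphi = \int_X \varphi \, d\eta$ $\eta$-a.e. for all $\varphi \in L_1(X, \eta)$; it suffices to test this on indicators. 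For $A \in \mathcal{I}_G$ the function $\mathbf{1}_A$ is $\mathcal{I}_G^\eta$-measurable, whence $\mathbb{E}(\mathbf{1}_A \mid \mathcal{I}_G^\eta) = \mathbf{1}_A$, and the hypothesis forces $\mathbf{1}_A = \eta(A)$ $\eta$-a.e., so $\eta(A) \in \{0, 1\}$; thus $\eta$ is ergodic.

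The argument is essentially a translation of the previous proposition, so I expect no serious obstacle; the only point requiring care is the bookkeeping between the genuinely invariant Borel sets that define ergodicity, the $\sigma$-algebra $\mathcal{I}_G$, and its $\eta$-completion $\mathcal{I}_G^\eta$ on which the conditional expectation lives. Making the triviality equivalence precise, and noting that for the converse one only needs indicators of invariant sets rather than all of $L_1(X,\eta)$, disposes of it.
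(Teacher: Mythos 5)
Your argument is correct and is precisely what the paper has in mind: the paper states this proposition without proof, declaring it immediate from the definitions and the reverse martingale convergence theorem, i.e.\ from the identification $\mathcal{A}_\infty^\rho \varphi = \mathbb{E}(\varphi \mid \mathcal{I}_G^\eta)$ that you take as your starting point. Your careful bookkeeping between $\mathcal{I}_G$, its $\eta$-completion, and the triviality characterization of ergodicity simply fills in the details the author left to the reader.
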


Conversely, we  have
\begin{proposition}
\label{denserg}
Let $\eta\in {\mathfrak M}({\mathfrak T}, \rho)$  and assume that there exists a
dense set $\Psi \subset L_1(X,\eta)$ such that for any $\psi \in \Psi$ we have
\[
\mathcal{A}_\infty^\rho \psi = \displaystyle\int \psi \, d\eta
\]
almost surely with respect to $\eta$. Then the measure $\eta$ is ergodic.
\end{proposition}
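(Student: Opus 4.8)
The plan is to promote the hypothesis from the dense set $\Psi$ to all of $L_1(X,\eta)$ by an $L_1$-continuity argument, and then to read off ergodicity from the preceding proposition, which characterizes ergodicity of $\eta$ by the requirement that $\mathcal{A}_\infty^\rho \varphi = \int_X \varphi\, d\eta$ hold $\eta$-almost surely for \emph{every} $\varphi \in L_1(X,\eta)$.

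The decisive point is that, on $L_1(X,\eta)$, the operator $\mathcal{A}_\infty^\rho$ is nothing but a conditional expectation. Indeed, by the Reverse Martingale Convergence Theorem invoked above, for each $\varphi \in L_1(X,\eta)$ the limit $\mathcal{A}_\infty^\rho \varphi = \lim_{n\to\infty} \mathcal{A}_n^\rho \varphi$ exists both $\eta$-almost surely and in $L_1(X,\eta)$, and satisfies
\[
\mathcal{A}_\infty^\rho \varphi = \mathbb{E}\big( \varphi \,\big|\, \mathcal{I}^\eta_G \big).
\]
In particular $\mathcal{A}_\infty^\rho$ is linear and, being a conditional expectation, is a contraction of $L_1(X,\eta)$; this contractivity is the source of the required continuity. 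The hypothesis on $\Psi$ may thus be restated as the assertion that $\mathbb{E}(\psi \mid \mathcal{I}^\eta_G) = \int_X \psi\, d\eta$ holds $\eta$-almost surely for every $\psi \in \Psi$.

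First I would fix an arbitrary $\varphi \in L_1(X,\eta)$ and, given $\varepsilon > 0$, use the density of $\Psi$ to choose $\psi \in \Psi$ with $\| \varphi - \psi \|_{L_1(X,\eta)} < \varepsilon$. Combining linearity, the hypothesis $\mathbb{E}(\psi \mid \mathcal{I}^\eta_G) = \int_X \psi\, d\eta$, and the contraction property, I would estimate
\[
\Big\| \mathcal{A}_\infty^\rho \varphi - \int_X \psi\, d\eta \Big\|_{L_1(X,\eta)} = \big\| \mathbb{E}\big( \varphi - \psi \,\big|\, \mathcal{I}^\eta_G \big) \big\|_{L_1(X,\eta)} \le \| \varphi - \psi \|_{L_1(X,\eta)} < \varepsilon.
\]
Since moreover $\big| \int_X \psi\, d\eta - \int_X \varphi\, d\eta \big| \le \| \psi - \varphi \|_{L_1(X,\eta)} < \varepsilon$, the triangle inequality gives
\[
\Big\| \mathcal{A}_\infty^\rho \varphi - \int_X \varphi\, d\eta \Big\|_{L_1(X,\eta)} < 2\varepsilon.
\]
As $\varepsilon > 0$ is arbitrary, the left-hand side vanishes, whence $\mathcal{A}_\infty^\rho \varphi = \int_X \varphi\, d\eta$ for $\eta$-almost every $x \in X$. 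Because $\varphi \in L_1(X,\eta)$ was arbitrary, the preceding proposition applies and shows that $\eta$ is ergodic.

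The argument carries no serious obstacle once the identification of $\mathcal{A}_\infty^\rho$ with conditional expectation is available; the only point demanding a little care is to keep the two modes of convergence, pointwise $\eta$-almost sure and in $L_1(X,\eta)$, properly aligned, but both are furnished simultaneously by the Reverse Martingale Convergence Theorem, so this presents no real difficulty.
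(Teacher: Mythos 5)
Your proof is correct and follows exactly the route the paper intends: the paper states Proposition \ref{denserg} without proof as an immediate consequence of the identification $\mathcal{A}_\infty^\rho \varphi = \mathbb{E}(\varphi \mid \mathcal{I}_G^\eta)$, and your argument simply makes explicit the density-plus-$L_1$-contraction step that this identification furnishes, then invokes the preceding characterization of ergodicity.
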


\subsection{Equivalence of indecomposability and ergodicity: proof of Proposition \ref{equivergindec}.}

\begin{proposition}  Let $A$ be a $G$-almost-invariant Borel subset of $X$.
Then there exists a $G$-invariant Borel set $\tilde A$ such that
\[
\nu (A \vartriangle \tilde A) = 0.
\]
\end{proposition}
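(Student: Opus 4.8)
The plan is to promote an almost-invariant set to a genuinely invariant set by averaging its indicator function over the group and thresholding the result. First I would consider the function $\chi_A$, the indicator of $A$, which lies in $L_1(X,\nu)$ since $\nu$ is a probability measure, and form the limiting average $\psi = \mathcal{A}_\infty^\rho \chi_A$. By the reverse martingale convergence proposition above, this limit exists $\nu$-almost surely and equals the conditional expectation $\mathbb{E}(\chi_A \mid \mathcal{I}_G^\nu)$; in particular $\psi$ is $\mathcal{I}_G^\nu$-measurable, hence $G$-invariant up to $\nu$-null sets, and takes values in $[0,1]$.

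The next step is to show that $\psi$ is in fact $\nu$-almost surely equal to $\chi_A$ itself, so that $\psi$ is (almost) an indicator of an invariant set. The key point is that $A$ is $G$-almost-invariant: for each $g \in G$ we have $\nu(A \vartriangle T_g A) = 0$, so $\chi_A \circ T_g = \chi_A$ holds $\nu$-almost everywhere for each fixed $g$. I would first verify that each averaging operator $\mathcal{A}_{K(n)}^\rho$ fixes $\chi_A$ up to a null set. Concretely, for a single compact group $K(n)$ the averaging integral over $K(n)$ of $\chi_A(T_k x)$ weighted by $\rho(k,x)$ should reproduce $\chi_A(x)$ almost everywhere, because by Fubini on $K(n) \times X$ with the measure $d\tilde\nu = \rho(k,x)\,d\mu_{K(n)}\,d\nu$ (as in Lemma \ref{avcomp}) the set $\{(k,x): T_k x \in A\}$ agrees $\tilde\nu$-almost everywhere with $K(n) \times A$. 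Passing to the limit in $n$ then yields $\psi = \chi_A$ $\nu$-almost surely.

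With $\psi = \chi_A$ $\nu$-a.e.\ and $\psi$ being $\mathcal{I}_G^\nu$-measurable, I would now extract a genuinely $G$-invariant Borel set. Since $\psi$ equals a $\{0,1\}$-valued function almost everywhere and is $\mathcal{I}_G^\nu$-measurable, the level set $\{\psi > 1/2\}$ is $\mathcal{I}_G^\nu$-measurable and differs from $A$ by a $\nu$-null set. The remaining issue is that $\mathcal{I}_G^\nu$ is only the $\nu$-completion of the genuinely invariant $\sigma$-algebra $\mathcal{I}_G$, so the level set is merely invariant modulo null sets, not yet honestly $G$-invariant. To produce an honestly invariant Borel representative $\tilde A$, I would work directly with the Borel function $\mathcal{A}_\infty^\rho \chi_A$, whose pointwise value $\lim_n \mathcal{A}_n^\rho \chi_A(x)$ is $G$-invariant wherever it is defined (the cocycle identity forces $\mathcal{A}_n^\rho\varphi(x) = \mathcal{A}_n^\rho\varphi(T_g x)$ for $g \in K(n)$, and letting $n \to \infty$ makes the limit invariant under all of $G$); then $\tilde A = \{x : \mathcal{A}_\infty^\rho \chi_A(x) > 1/2\}$, intersected with the Borel set on which the limit exists and suitably saturated, is a $G$-invariant Borel set with $\nu(A \vartriangle \tilde A) = 0$.

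The main obstacle I anticipate is precisely this last passage from a modulo-null invariant set to an everywhere-invariant Borel set: one must check that the set where $\mathcal{A}_\infty^\rho \chi_A(x)$ is defined is itself Borel and $G$-invariant, and that the genuine pointwise $G$-invariance of the limit function holds on a truly invariant Borel domain, not merely almost everywhere. The inductive-limit structure of $G$ is essential here, since invariance under each $K(n)$ is what the averaging operator delivers, and only the full chain $\bigcap_n \mathcal{I}_{K(n)} = \mathcal{I}_G$ yields $G$-invariance in the limit.
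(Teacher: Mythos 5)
Your proof is correct and follows essentially the same route as the paper's: average $\chi_A$ over the compact groups $K(n)$, use Fubini on $K(n)\times X$ with the weighted measure to see that these averages reproduce $\chi_A$ almost surely, and then define $\tilde A$ by a tail condition on the averages, whose genuine (not merely almost-sure) $G$-invariance comes from the pointwise $K(n)$-invariance of $\mathcal{A}_n^{\rho}$ forced by the cocycle identity together with the inductive-limit structure. The only cosmetic difference is that the paper takes $\tilde A=\{x\in X:\mathcal{A}_n^{\rho}\chi_A(x)=1\ \text{for all sufficiently large } n\}$ and closes the argument with the integral identity $\int_X\mathcal{A}_\infty^{\rho}\chi_A\,d\nu=\nu(A)$, whereas you threshold the limit function at $1/2$ on its (invariant, Borel) domain of convergence.
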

\begin{proof}
Let $\chi_A$ be, as usual,  the indicator function of $A$.
If $A$ is  $G$-almost-invariant, then for almost every $x \in A$ and all $n \in \mathbb{N}$ we have
\[
\mathcal{A}_n^{\rho} \chi_A (x) = 1.
\]
Indeed, consider the set $K(n) \times A$ endowed with the product measure $\mu_{K(n)} \times \nu$. For almost all points $(k,x) \in K(n) \times A$ by definition we have $T_k x \in A$. By Fubini's theorem, for almost every $x \in X$ the set $\{ k \in K(n): T_k x \in A\}$ has full measure, whence $\mathcal{A}_n^\rho \chi_A (x) = 1$ as desired.

Now introduce the set $\tilde A$ as follows:
\[
\tilde A = \{ x \in X: \mathcal{A}_n^\rho \chi_A(x) = 1 \text{ for all sufficiently large } n \in \mathbb{N} \}.
\]
By definition, $\tilde{A} \supset A$. On the other hand, since for $x \in \tilde{A}$ we have $\mathcal{A}_\infty^\rho \chi_A (x) = 1,$
the equality
\[
\displaystyle\int\limits_X \mathcal{A}_\infty^\rho \chi_A \, d\nu = \nu(A)
\]
implies $\nu(\tilde A) \le \nu(A)$, whence $\nu (\tilde A \vartriangle A) = 0$ and the proposition is proved.
\end{proof}

\subsection{The set of ergodic measures is Borel.}

\begin{proposition}
\label{ergborel}
Let $\rho$ be a fibrewise continuous cocycle
over a measurable action $\mathfrak T$ of an inductively compact group $G$ on a standard Borel space $(X, {\mathcal B})$.
Then the  set $\mathfrak M_{\erg} (\mathfrak T, \rho)$ is a Borel subset of $\mathfrak M(X)$.
\end{proposition}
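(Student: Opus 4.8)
The plan is to characterize ergodicity of a measure $\eta\in\mtrho$ through countably many Borel conditions built from the averaging operators $\mathcal A_n^\rho$, which, crucially, depend only on $\rho$ and the action and not on $\eta$. First I would fix, once and for all, a countable family $\{\varphi_m\}_{m=1}^\infty$ of bounded Borel functions on $X$ --- say, rational linear combinations of indicators of sets from a fixed countable algebra generating $\mathcal B$ --- with the property that $\{\varphi_m\}$ is dense in $L_1(X,\eta)$ for \emph{every} Borel probability measure $\eta$ simultaneously. On a standard Borel space such a family exists, since a generating algebra approximates every Borel set in $\eta$-measure for every $\eta$. Combining the characterization of ergodicity in terms of $\mathcal A_\infty^\rho$ with Proposition \ref{denserg}, a measure $\eta\in\mtrho$ is then ergodic if and only if, for every $m$, the equality $\mathcal A_\infty^\rho\varphi_m = \eta(\varphi_m)$ holds $\eta$-almost everywhere: the forward direction is immediate from the characterization of ergodicity applied to each $\varphi_m\in L_1(X,\eta)$, and the converse is exactly Proposition \ref{denserg} applied to the dense set $\Psi=\{\varphi_m\}$.

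Next I would verify the relevant measurability. For fixed $m,n$ the function $x\mapsto \mathcal A_n^\rho\varphi_m(x)$ is Borel: since $\mathfrak T$ is a Borel action and $\rho$ a measurable cocycle, the map $(k,x)\mapsto \varphi_m(T_kx)\,\rho(k,x)$ is jointly Borel on $K(n)\times X$, and integrating over $k$ against the fixed Haar measure $\mu_{K(n)}$ yields a Borel function of $x$ (and likewise for the denominator, whence the ratio in (\ref{arhok}) is Borel). Hence $g_m(x):=\lim_n \mathcal A_n^\rho\varphi_m(x)$ is Borel on the Borel set where the limit exists, and I would extend it by $0$ elsewhere; note $|g_m|\le\|\varphi_m\|_\infty$ since each $\mathcal A_n^\rho$ is a contraction. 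Consequently, for the fixed bounded Borel function $g_m^2$ the map $\eta\mapsto\int_X g_m^2\,d\eta$ is Borel on $\mathfrak M(X)$, and so is $\eta\mapsto\eta(\varphi_m)$.

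It remains to express the almost-everywhere equality as a Borel condition. For $\eta\in\mtrho$ the reverse martingale convergence theorem gives $g_m=\mathbb E(\varphi_m\mid \mathcal I_G^\eta)$ $\eta$-a.s., so in particular $\int_X g_m\,d\eta=\eta(\varphi_m)=:c_m$ for every such $\eta$. Therefore $g_m=c_m$ holds $\eta$-a.e. if and only if $\int_X g_m^2\,d\eta=c_m^2$, since by Jensen's inequality $\int_X g_m^2\,d\eta\ge c_m^2$ always, with equality precisely when $g_m$ is $\eta$-a.e. constant. Setting
\[
E_m=\Big\{\eta\in\mtrho:\ \int_X g_m^2\,d\eta=(\eta(\varphi_m))^2\Big\},
\]
each $E_m$ is the zero set of a Borel function on the Borel set $\mtrho$ (Borel by Proposition \ref{rhoborel}), hence Borel, and by the first paragraph $\mergtrho=\bigcap_{m=1}^\infty E_m$. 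This exhibits $\mergtrho$ as a countable intersection of Borel sets, proving the claim.

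The step I expect to be the main obstacle is producing a single countable family $\{\varphi_m\}$ that is dense in $L_1(X,\eta)$ uniformly over all $\eta$, together with the joint Borel measurability of $(k,x)\mapsto\varphi_m(T_kx)\,\rho(k,x)$ and the resulting measurability, in $x$, of the integrated averaging operators. Once these are in place, the variance characterization and the reduction to countably many conditions are routine; note that fibrewise continuity of $\rho$ enters only through Proposition \ref{rhoborel}, which guarantees that the ambient set $\mtrho$ is itself Borel.
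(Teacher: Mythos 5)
Your proposal is correct and follows essentially the same route as the paper: a countable family of bounded Borel functions that is dense in $L_1(X,\eta)$ for every $\eta$ simultaneously (the paper's Proposition \ref{setphi}), the characterization of ergodicity via the operators $\mathcal A_n^\rho$ together with Proposition \ref{denserg}, and the observation that $\mathfrak M_{\erg}(\mathfrak T,\rho)$ is then a countable intersection of Borel subsets of the Borel set $\mtrho$. The only difference is that where the paper declares the Borel measurability of the set $\{\nu:\ \lim_n\mathcal A_n^\rho\varphi \text{ exists and is constant $\nu$-a.s.}\}$ to be clear, you make it explicit via the variance identity $\int g_m^2\,d\eta=(\eta(\varphi_m))^2$, which is a legitimate filling-in of that step rather than a different argument.
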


\begin{proof}

We start with the following auxiliary proposition.

\begin{proposition}
\label{setphi} Let $(X, {\mathcal B})$ be a standard Borel space.
 There exists a countable set $\Phi$ of bounded measurable functions on $X$ such that for any probability measure $\nu$ on $X$ and any bounded measurable function $\varphi: X \to \mathbb{R}$ there exists a sequence $\varphi_n \in \Phi$ such that
\begin{enumerate}
\item $\displaystyle \sup_{n \in \mathbb{N}, x \in X} \varphi_n(x) < +\infty$;

\item  $\varphi \to \varphi$ as $n \to \infty$ almost surely with respect to $\nu$.
\end{enumerate}
\end{proposition}

\begin{proof} On the unit interval  take the family of piecewise-linear functions with nodes at rational points.
\end{proof}

We return to the proof of Proposition \ref{ergborel}.
It is clear that for any fixed bounded measurable function $\varphi$ on $X$ the set
\[
\{ \nu : \lim_{n \to \infty} \mathcal A_n^\rho \varphi \text{ exists and is constant } \nu\text{-almost surely} \}
\]
is Borel. Intersecting over all $\varphi \in \Phi$ and using Proposition \ref{denserg}, we obtain the claim.
\end{proof}

\section{The sigma-algebra of $G$-invariant sets.}
\subsection{Measurable partitions in the sense of Rohlin.}\subsubsection{Lebesgue spaces}
A triple $(X, {\mathcal B}, \nu)$, where $X$ is a set, ${\mathcal B}$ a sigma-algebra on $X$, and $\nu$ a measure on $X$, defined on
${\mathcal B}$ and such that ${\mathcal B}$ is complete with respect to $\nu$ is called a {\it Lebesgue space} if it is either countable or measurably isomorphic to the unit interval endowed with the sigma-algebra of Lebesgue measurable sets and the Lebesgue measure (perhaps with
a countable family of atoms). No Borel structure on $X$ is assumed in this definition.
\subsubsection{Measurable partitions}
A partition $\xi$ of $X$ is simply a representation of $X$ as a disjoint union of measurable sets:
$$
X=\bigcup X_{\alpha}.
$$
The sets $X_{\alpha}$ are called {\it elements} of the partition. For a point $x$, the element of the partition $\xi$ containing $x$ will be denoted ${\mathcal C}_{\xi}(x)$.
A family of sets $\Psi$ is said to be {\it a basis} for the
partition $\xi$ if for any two elements $X_1, X_2$ of $\xi$ there exists a set $A_1$ in $\Psi$
containing $A_1$ and disjoint from $A_2$.
A {\it measurable partition} $\xi$  of $(X, \mathcal B, \nu)$ is by definition a partition
of a subset $Y\subset X$ of full measure which admits a countable basis.

Following Rohlin, to a measurable partition $\xi$ we assign the quotient space ${\overline X}(\xi)$ whose points
are elements of the partition $\xi$. We have a natural almost surely defined projection map $\pi_{\xi}: X\to {\overline X}(\xi)$, which endows the  set ${\overline X}(\xi)$ with a natural sigma-algebra ${\overline {\mathcal B}}(\xi)$, the push-forward of ${\mathcal B}$,
and the natural quotient-measure ${\overline \nu}_{\xi}$, the push-forward of the measure $\nu$.
Rohlin proved that the space  $({\overline X}(\xi), {\overline {\mathcal B}}(\xi), {\overline \nu}_{\xi})$ is again
a Lebesgue space. Furthermore, Rohlin showed that the measure $\nu$ admits the {\it canonical system of conditional measures} defined as follows.  For ${\overline \nu}_{\xi}$-almost every element ${\mathcal C}$ of the partition $\xi$ there is  a
probability measure $\nu_{{\mathcal C}}$ on ${\mathcal C}$ such that for any set $A\in {\mathcal B}$ the function
${\rm int}_A: {\overline X}(\xi)\to {\mathbb R}$ given by the formula ${\rm int}_A({\mathcal C})=\nu_{{\mathcal C}}(A)$ is
${\overline {\mathcal B}}$-measurable and we have
\begin{equation}
\nu(A)=\displaystyle\displaystyle\int\limits_{{\overline X}(\xi)} \nu_{{\mathcal C}}(A) d{\overline \nu}_{\xi}({\mathcal C}).
\end{equation}
This system of canonical conditional measures is unique: any two systems coincide ${\overline \nu}_{\xi}$-almost surely.
To a measurable partition $\xi$ we now assign an averaging operator ${\mathcal A}_{\xi}$ on $L_1(X, \nu)$, given by the formula
\begin{equation}
{\mathcal A}_{\xi}f(x)=\int_{{\mathcal C}_{\xi}(x)} f(x) d\nu_{{\mathcal C}_{\xi}(x)}
\end{equation}
(the right-hand side is defined $\nu$-almost surely by Rohlin's Theorem).
Given a measurable partition $\xi$, let ${\mathcal B}_{\xi}$ be the sigma-algebra of measurable subsets of $X$
which are unions of elements of $\xi$ and a set of measure zero.
Rohlin proved that for any    $f\in L_1(X, \nu)$ we have the $\nu$-almost sure identity
\begin{equation}
{\mathbb E}(f| {\mathcal B}_{\xi})={\mathcal A}_{\xi}f.
\end{equation}

Rohlin showed, furthermore, that every complete sub-sigma-algebra ${\mathcal B}_1\subset {\mathcal B}$ has the form
${\mathcal B}_1={\mathcal B}_{\xi}$ for some measurable partition $\xi$ of the Lebesgue space $(X, {\mathcal B}, \nu)$.

\subsection{Borel partitions}
Let $(X, \mathcal B)$ be a standard Borel space. A decomposition
\[
X \;=\; \bigsqcup\limits_{\alpha} X_\alpha,
\]
where $\alpha$ takes values in an arbitrary index set and where, for each $\alpha$, the set $X_\alpha$ is Borel, will be called a {\it Borel partition} if there exists a countable family
\[
Z_1, \ldots, Z_n, \ldots
\]
of Borel sets such that for any two indices $\alpha_1, \alpha_2$ where $\alpha_1 \ne \alpha_2$, there exists $i \in \mathbb N$ satisfying
\[
X_{\alpha_1} \subset Z_i, \qquad X_{\alpha_2} \cap Z_i = \emptyset.
\]
In this case, the countable family will be called the {\it countable basis} for the partition.

If $\nu$ is a Borel probability measure on $X$, then the space $(X, {\mathcal B}, \nu)$ is a Lebesgue space in the sense of Rohlin, while a Borel partition now becomes a measurable partition in the sense of Rohlin. Observe that all conditional measures are in this case defined on the Borel sigma-algebra.

\subsection{The measurable partition corresponding to the sigma-algebra of invariant sets}
Our first aim is to give an explicit description of the measurable partition
corresponding to the $\sigma$-algebra $\mathcal{I}_G$ of $G$-invariant sets.

Let $\Phi$ be the set given by Proposition \ref{setphi} and write  $\Phi = \{ \varphi_1, \varphi_2, \ldots, \varphi_n, \ldots \}$.

Introduce a set $X(\Phi, \rho)$ by the formula:
\[
X(\Phi, \rho) \;=\; \{ x \in X : \, A_\infty^\rho \varphi_k(x) \text{ is defined for all } k \in \mathbb N\}.
\]
The set $X(\Phi, \rho)$ is clearly Borel. Observe that for any $\nu \in \mathfrak M(\mathfrak T, \rho)$ we have
\[
\nu(\mathfrak M(\mathfrak T, \rho)) = 1.
\]

Let $\mathbb R^{\mathbb N}$ be the space of all real sequences:
\[
\mathbb R^{\mathbb N} \;=\; \{ \mathbf r = (r_k), \, k \in \mathbb N,\, r_k \in \mathbb R \}.
\]
We endow $\mathbb R^{\mathbb N}$ with the usual product $\sigma$-algebra, which turns it into a standard Borel space. For $\mathbf r \in \mathbb R^{\mathbb N}$, we introduce a subset $X( \mathbf r, \Phi, \rho)$ by the formula
\[
X (\mathbf r, \Phi, \rho) \;=\; \{ x \in X(\Phi,\rho): \, A_\infty^\rho \varphi_k(x) = r_k, \, k \in \mathbb N \}.
\]
For any $\mathbf r \in \mathbb R^{\mathbb N}$, the set $X(\mathbf r, \Phi, \rho)$ is Borel, and we clearly have
\[
X(\Phi,\rho) \;=\; \bigsqcup\limits_{\mathbf r \in \mathbb R^{\mathbb N}}^{\quad} X( \mathbf r, \Phi, \rho).
 \]
It is clear from the definitions that the Borel partition
\[
X \;=\; (X \setminus X(\Phi, \rho)) \bigsqcup\bigsqcup\limits_{\mathbf r \in \mathbb R^{\mathbb N}} X(\mathbf r, \Phi, \rho)
\]
has a countable basis.

Introduce a map
\[
\Pi_\Phi: \, X(\Phi, \rho) \longrightarrow \mathbb R^{\mathbb N}
\]
by the formula
\[
\Pi_\Phi(x) \;=\; \left( \mathcal A^\rho_\infty \varphi_1(x), \,\ldots,\, \mathcal A^\rho_\infty \varphi_n(x),\, \ldots \right).
\]
The map $\Pi_\Phi$ is, by definition, Borel. Now, introduce a map
\[
\Int_\Phi: \, \mathfrak M(\mathfrak T, \rho) \longrightarrow \mathbb R^{\mathbb N}
\]
by the formula
\[
\Int_\Phi (\nu) \;=\; \left( \;\int_X \varphi_1\, d\nu, \, \ldots, \, \int_X \varphi_n\, d\nu, \, \ldots \;\right).
\]
The map $\Int\limits_\Phi$ is, by definition, Borel and injective.

By Souslin's Theorem (see \cite{souslin}, \cite{bogachev}, \cite{coudene}), 
it follows the sets $\Int_\Phi(\mathfrak M(\mathfrak T, \rho) )$ and $\Int_\Phi (\mathfrak M_{\erg} ( \mathfrak T, \rho))$ 
are Borel. Introduce a subset $X_{\erg} \subset X$ by the formula
\[
X_{\erg} \;=\; \Pi_\Phi^{-1} \left( \Int_\Phi \left( \mathfrak M_{\erg} (\mathfrak T, \rho) \right) \right).
\]
Again, Souslin's Theorem implies that the set $X_{\erg}$ is Borel.
We thus have the following diagram, all whose arrows correspond to Borel maps 
$$
\xymatrixcolsep{5pc}\xymatrix{
X_{\erg} \ar[d]^{\Pi_{\Phi}} \ar[rd]^{({\Int_{\Phi}})^{-1}\circ\Pi_{\Phi}} \\
{\mathbb R}^{\mathbb N}&
{\mathfrak M}_{\erg}(\rho, \mathfrak T) \ar[l]^{\Int_{\Phi}}
}
$$

We shall now see that for any $\nu \in \mathfrak M(\mathfrak T, \rho)$ we have
\[
\nu(X_{\erg}) \;=\;1.
\]
Indeed, take an arbitrary $\nu \in \mathfrak M(\mathfrak T, \rho)$. The Borel partition $\xi$ now induces a measurable partition that we denote $\xi^\nu$. Let $\overline X(\xi^\nu)$ be the space of elements of the partition $\xi$, or, in other words, the quotient of the space $X$ by the partition $\xi$. Let
\[
\pi_{\xi^\nu}: \,  X \longrightarrow \overline X(\xi^\nu)
\]
be the natural projection map, and let
\[
\widetilde \nu \;=\; (\pi_{\xi^\nu})_* \,\nu
\]
be the quotient measure on $\overline X(\xi^\nu)$.

By Rohlin's Theorem, $\widetilde \nu$-almost every element $\mathcal C$ of the partition $\xi^\nu$ carries a canonical conditional measure $\nu_{\mathcal C}$. The key step in the construction of the ergodic decomposition is given by the following Proposition.

\begin{proposition}\label{ergcond}
The measurable partition $\xi^\nu$ generates the $\sigma$-algebra $\mathcal I_G^\nu$, the $\nu$-completion of the $\sigma$-algebra of Borel $G$-invariant sets.
For $\widetilde \nu$-almost every $\mathcal C$ we have $\nu_\mathcal C\in\mathfrak M_{\erg} (\rho, \mathfrak T)$.
\end{proposition}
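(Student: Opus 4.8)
The plan is to prove the two assertions in turn. For the statement that $\xi^\nu$ generates $\mathcal I_G^\nu$, I would first record that each function $\mathcal A_\infty^\rho\varphi_k$ is $G$-invariant wherever defined: since $\mathcal A_n^\rho\varphi_k$ satisfies $\mathcal A_n^\rho\varphi_k(T_gx)=\mathcal A_n^\rho\varphi_k(x)$ for every $g\in K(m)$ and every $n\ge m$, both the invariance and the convergence survive in the limit, so that $X(\Phi,\rho)$ is $G$-invariant and each element $X(\mathbf r,\Phi,\rho)$ is a $G$-invariant Borel set; this gives $\mathcal B_{\xi^\nu}\subseteq\mathcal I_G^\nu$. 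For the reverse inclusion I would use the identity $\mathcal A_\infty^\rho\varphi=\mathbb E(\varphi\mid\mathcal I_G^\nu)$ together with the approximation property of $\Phi$ (Proposition \ref{setphi}): given a bounded $\mathcal I_G^\nu$-measurable $\varphi$, pick a uniformly bounded sequence $\psi_j\in\Phi$ with $\psi_j\to\varphi$ $\nu$-almost surely; then $\mathcal A_\infty^\rho\psi_j\to\mathcal A_\infty^\rho\varphi=\varphi$ in $L_1$ because conditional expectation is an $L_1$-contraction, while each $\mathcal A_\infty^\rho\psi_j$, being one of the coordinate functions that define $\xi$, is constant on the elements of $\xi$ and hence $\mathcal B_{\xi^\nu}$-measurable. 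Thus $\varphi$ is $\mathcal B_{\xi^\nu}$-measurable, yielding $\mathcal I_G^\nu\subseteq\mathcal B_{\xi^\nu}$.

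For the second assertion the substantial point is quasi-invariance of the conditional measures. For a fixed $g\in G$ I would disintegrate both sides of $\nu\circ T_g=\rho(g,\cdot)\nu$ over $\xi^\nu$. Since every element of $\xi$ is $G$-invariant, $T_g$ fixes each fibre and $(\pi_{\xi^\nu})_*(\nu\circ T_g)=\widetilde\nu$, so the canonical conditional measures of $\nu\circ T_g$ are $\nu_{\mathcal C}\circ T_g$; on the other hand those of $\rho(g,\cdot)\nu$ are $\rho(g,\cdot)\nu_{\mathcal C}/c_{\mathcal C}(g)$ with $c_{\mathcal C}(g)=\int_{\mathcal C}\rho(g,\cdot)\,d\nu_{\mathcal C}$, and equality of the quotient measures forces $c_{\mathcal C}(g)=1$. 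The uniqueness of the canonical system of conditional measures then gives $\nu_{\mathcal C}\circ T_g=\rho(g,\cdot)\nu_{\mathcal C}$ for $\widetilde\nu$-almost every $\mathcal C$, for this single $g$.

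The main obstacle is to pass from this ``for each $g$, almost every $\mathcal C$'' statement to ``for almost every $\mathcal C$, all $g$,'' since $G$ is uncountable while membership in $\mathfrak M(\mathfrak T,\rho)$ requires the cocycle relation for every $g$. I would argue one compact subgroup $K(n)$ at a time. Testing the above against a countable determining family of bounded Borel functions and applying the Fubini theorem on $K(n)\times\overline X(\xi^\nu)$ with the measure $\mu_{K(n)}\otimes\widetilde\nu$ (joint measurability coming from Borelness of the action and the measurable dependence of $\nu_{\mathcal C}$ on $\mathcal C$), I obtain, for $\widetilde\nu$-almost every $\mathcal C$, a set $\Omega_n\subset K(n)$ of full Haar measure on which $\nu_{\mathcal C}\circ T_k=\rho(k,\cdot)\nu_{\mathcal C}$. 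To upgrade $\Omega_n$ to all of $K(n)$ I would use the group structure and the cocycle identity: given an arbitrary $k_1\in K(n)$, bi-invariance of Haar measure lets me choose $k_2\in\Omega_n$ with $k_2k_1\in\Omega_n$; writing $\nu_{\mathcal C}\circ T_{k_2k_1}=(\nu_{\mathcal C}\circ T_{k_2})\circ T_{k_1}$ and inserting the two known relations gives
$$\frac{d(\nu_{\mathcal C}\circ T_{k_1})}{d\nu_{\mathcal C}}=\frac{\rho(k_2k_1,\cdot)}{\rho(k_2,T_{k_1}\cdot)}=\rho(k_1,\cdot),$$
the last equality by the cocycle identity. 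As $G=\bigcup_n K(n)$ and there are only countably many $n$, this yields $\nu_{\mathcal C}\in\mathfrak M(\mathfrak T,\rho)$ for $\widetilde\nu$-almost every $\mathcal C$; I note that this upgrade requires neither continuity of the action nor fibrewise continuity of $\rho$.

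Finally, for ergodicity I would apply Proposition \ref{denserg} with the dense family $\Phi$. By the first assertion and Rohlin's identity $\mathcal A_{\xi^\nu}\varphi_k=\mathbb E(\varphi_k\mid\mathcal B_{\xi^\nu})=\mathbb E(\varphi_k\mid\mathcal I_G^\nu)=\mathcal A_\infty^\rho\varphi_k$, the conditional measure satisfies $\int\varphi_k\,d\nu_{\mathcal C}=\mathcal A_\infty^\rho\varphi_k=r_k$ for $\widetilde\nu$-almost every $\mathcal C=X(\mathbf r,\Phi,\rho)$; but on such $\mathcal C$ the function $\mathcal A_\infty^\rho\varphi_k$ is identically $r_k$, so $\mathcal A_\infty^\rho\varphi_k=\int\varphi_k\,d\nu_{\mathcal C}$ holds $\nu_{\mathcal C}$-almost surely, simultaneously for all $k$ after discarding a single $\widetilde\nu$-null set. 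Since $\Phi$ is dense in $L_1(X,\nu_{\mathcal C})$ by Proposition \ref{setphi}, Proposition \ref{denserg} gives ergodicity of $\nu_{\mathcal C}$, which together with the quasi-invariance just established yields $\nu_{\mathcal C}\in\mathfrak M_{\erg}(\rho,\mathfrak T)$ for $\widetilde\nu$-almost every $\mathcal C$.
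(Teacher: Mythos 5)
Your proposal is correct, and for the first claim and the ergodicity step it essentially coincides with the paper's proof (approximation by the countable family $\Phi$ of Proposition \ref{setphi} to identify $\mathcal B_{\xi^\nu}$ with $\mathcal I_G^\nu$, then Proposition \ref{denserg} applied fibrewise); your disintegration argument for fixed $g$, including the normalization observation $c_{\mathcal C}(g)=1$, is also the paper's argument, just spelled out more carefully. Where you genuinely diverge is the crucial quantifier exchange, from ``for every $g$, almost every $\mathcal C$'' to ``for almost every $\mathcal C$, every $g$.'' The paper fixes $K(n_0)$, passes to a countable dense subgroup $K'\subset K(n_0)$, and then extends from $K'$ to all of $K(n_0)$ by a continuity argument: it invokes Varadarajan's theorem (Theorem \ref{varad}) to get a continuous realization of the compact-group action and then uses fibrewise continuity of $\rho$, Fatou's lemma and truncation by the sets $\{\rho_{\max}\le N\}$ to pass to the limit along $k_j\to k$. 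You instead exchange quantifiers by Fubini on $K(n)\times\overline X(\xi^\nu)$, obtaining for almost every $\mathcal C$ a set $\Omega_n\subset K(n)$ of full Haar measure, and then upgrade to an arbitrary $k_1\in K(n)$ via the translation trick $k_2\in\Omega_n\cap\Omega_n k_1^{-1}$ combined with the pointwise cocycle identity; the key identity $(\rho(k_2,\cdot)\nu_{\mathcal C})\circ T_{k_1}=\rho(k_2,T_{k_1}\cdot)\,(\nu_{\mathcal C}\circ T_{k_1})$ is correct, positivity of $\rho$ makes the division legitimate, and the joint measurability you need for Fubini follows from a routine monotone-class argument, as you indicate. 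What your route buys is economy of hypotheses: it uses only measurability of the cocycle, so Proposition \ref{ergcond} itself holds without fibrewise continuity and without any continuous realization, sidestepping the open question the paper raises about extending Theorem \ref{varad} to inductively compact groups. What the paper's route buys is coherence with the rest of the argument: fibrewise continuity is a standing hypothesis of Theorem \ref{ergdecstrcont} in any case, and the same dense-subgroup extension lemma is what underlies the Borelness of $\mathfrak M(\mathfrak T,\rho)$ and $\mathfrak M_{\erg}(\mathfrak T,\rho)$ (Propositions \ref{rhoborel} and \ref{ergborel}), which the construction of $X_{\erg}$ requires; to remove fibrewise continuity from the theorem as a whole you would still need to redo those Borelness claims (your Fubini criterion could plausibly be adapted to that purpose, but that work is not contained in your proposal).
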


The Proposition will be proved in the following subsection. Rohlin's decomposition
\[
\nu \;=\; \int\limits_{\overline X(\xi^\nu)} \nu_{\mathcal C} \, d\widetilde \nu (\mathcal C)
\]
will now be used to obatain an ergodic decomposition of the measure $\nu$.

 Indeed, let the map
\[
\mes_{\xi^{\nu}}: \overline X(\xi^\nu) \longrightarrow \mathfrak M_{\erg} (\rho, \mathfrak T)
\]
be given by the formula
\[
\mes_{\xi^\nu} (\mathcal C) \;=\; \nu_{\mathcal C}.
\]

\begin{proposition}
The map $\mes_{\xi^\nu}$ is ${\widetilde \nu}$-measurable.
\end{proposition}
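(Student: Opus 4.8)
The plan is to check measurability of $\mes_{\xi^\nu}$ directly against a generating family for the Borel $\sigma$-algebra of the target space. Recall that the Borel structure on $\mathfrak M(X)$, and hence on its subset $\mathfrak M_{\erg}(\rho, \mathfrak T)$, is by definition the smallest $\sigma$-algebra containing the sets
$$
M_{A, \alpha} \;=\; \big\{ \eta \in \mathfrak M(X): \, \eta(A) > \alpha \big\}, \qquad A \in \mathcal B,\ \alpha \in \mathbb R.
$$
A map into $\mathfrak M_{\erg}(\rho, \mathfrak T)$ is therefore $\widetilde \nu$-measurable as soon as the preimage of every set $M_{A, \alpha}$ is $\widetilde \nu$-measurable; this is the standard reduction of measurability of a map to measurability against the generators of the target $\sigma$-algebra. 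First I would restrict attention to the full-measure subset of $\overline X(\xi^\nu)$ on which the canonical conditional measure $\nu_{\mathcal C}$ is defined and, by Proposition \ref{ergcond}, belongs to $\mathfrak M_{\erg}(\rho, \mathfrak T)$; on this set the map $\mes_{\xi^\nu}$ is genuinely defined and takes values in the asserted target, and $\widetilde \nu$-measurability is all that is required.

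Next I would compute the preimage of a generator. For a Borel set $A \subset X$ and a real number $\alpha$ we have
$$
\mes_{\xi^\nu}^{-1}(M_{A, \alpha}) \;=\; \big\{ \mathcal C \in \overline X(\xi^\nu): \, \nu_{\mathcal C}(A) > \alpha \big\}.
$$
The key step is the observation that the right-hand side is exactly a superlevel set of the function $\mathcal C \mapsto \nu_{\mathcal C}(A)$, whose $\overline{\mathcal B}(\xi^\nu)$-measurability is precisely part of the content of Rohlin's theorem on the canonical system of conditional measures recalled above. Consequently $\mes_{\xi^\nu}^{-1}(M_{A, \alpha})$ is the preimage of the ray $(\alpha, +\infty)$ under a $\overline{\mathcal B}(\xi^\nu)$-measurable function, hence $\overline{\mathcal B}(\xi^\nu)$-measurable and a fortiori $\widetilde \nu$-measurable. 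Since this holds for every generator $M_{A, \alpha}$, the map $\mes_{\xi^\nu}$ is $\widetilde \nu$-measurable, which is the assertion.

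I do not expect a serious obstacle here: the statement is essentially a repackaging of Rohlin's measurability of the conditional measures through the definition of the Borel structure on the space of measures. The only points requiring genuine care are the standard ``measurability against generators'' reduction (valid because the $M_{A,\alpha}$ generate $\mathcal B(\mathfrak M(X))$) and the preliminary restriction to the full-measure set on which $\mes_{\xi^\nu}$ is defined and ergodic-valued, so that the composition with each coordinate function $\myint_A(\mathcal C) = \nu_{\mathcal C}(A)$ makes sense $\widetilde \nu$-almost everywhere.
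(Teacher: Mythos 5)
Your argument is correct, but its decisive step is genuinely different from the one in the paper. Both proofs share the same outer reduction: measurability is tested against the generating sets $M_{A,\alpha}$ of the Borel structure on $\mathfrak M(X)$, and, by the definition of the quotient measurable structure on $\overline X(\xi^\nu)$, this amounts to checking that a certain superlevel set upstairs in $X$ is $\nu$-measurable. At that point you invoke the measurability clause of Rohlin's theorem verbatim: for each Borel set $A$ the function $\mathcal C \mapsto \nu_{\mathcal C}(A)$ is $\overline{\mathcal B}(\xi^\nu)$-measurable, so its superlevel sets are measurable and you are done. The paper instead passes through Proposition \ref{ergcond}: since $\widetilde\nu$-almost every conditional measure lies in $\mathfrak M_{\erg}(\mathfrak T,\rho)$, the set $\{ x \in X : \int \varphi\, d\nu_{\mathcal C(x)} > \alpha\}$ coincides up to a $\nu$-null set with the Borel set $\{x \in X : \mathcal A_\infty^\rho \varphi(x) > \alpha\}$. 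Your route is more elementary and more general: apart from Proposition \ref{ergcond} being needed to know that the map lands in $\mathfrak M_{\erg}(\mathfrak T,\rho)$ at all, it uses nothing about the group action, invariance, or ergodicity, and it would prove $\widetilde\nu$-measurability of $\mathcal C \mapsto \nu_{\mathcal C}$ for an arbitrary measurable partition; it makes transparent that the proposition is pure Rohlin theory filtered through the definition of $\mathcal B(\mathfrak M(X))$. What the paper's heavier route buys is the extra information produced along the way: the lift $\Mes_{\xi^\nu} = \mes_{\xi^\nu}\circ \pi_{\xi^\nu}$ agrees $\nu$-almost everywhere with the fixed Borel map $(\Int_\Phi)^{-1}\circ \Pi_\Phi$, which does not depend on $\nu$. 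That identification, recorded in Corollary \ref{mesxi}, is what the remaining claims of Theorem \ref{ergdecstrcont} --- the single $\nu$-independent map $\pi$, the Borel isomorphism $\nu \mapsto \overline\nu$, and uniqueness --- actually rest on; so the paper's proof is engineered to feed the subsequent argument, while yours isolates the minimal input needed for the stated proposition.
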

\begin{proof} Let $\varphi$ be a bounded measurable function on $X$. Let $\alpha \in \mathbb R$. By definition of the measurable structure on the quotient space ${\overline X}(\xi^{\nu})$,
 it suffices to show that the set
\[
\{ x \in X: \displaystyle\int \varphi \, d\nu_{\mathcal C(x)} > \alpha \}
\]
is $\nu$-measurable. But by Proposition \ref{ergcond} we have the $\nu$-almost sure equality
\[
\{ x \in X: \displaystyle\int \varphi \, d\nu_{\mathcal C(x)} > \alpha \} =
\{ x \in X: \mathcal A_\infty^\rho \varphi(x) > \alpha \}.
\]
 Since the set $\{ x \in X: \mathcal A_\infty^\rho \varphi(x) > \alpha \}$ is Borel, the Proposition is proved.
\end{proof}

For $x \in X$ let ${\mathcal C}_{\xi}(x)$ be the element of the partition $\xi$ containing $x$, and introduce a map
\[
\Mes_{\xi^{\nu}}: \, X \longrightarrow \mathfrak M_{\erg}(\rho, \mathfrak T)
\]
by the formula
\[
\Mes_{\xi^{\nu}} (x) \;=\; \nu_{{\mathcal C}_{\xi}(x)}.
\]

We have a commutative diagram

$$
\xymatrixcolsep{5pc}\xymatrix{
X \ar[d]^{\pi_{\xi^{\nu}}} \ar[rd]^{\Mes_{{\xi}^{\nu}}} \\
\overline X(\xi^\nu) \ar[r]^{\mes_{\xi^\nu}} &
{\mathfrak M}_{\erg}(\rho, \mathfrak T)
}
$$
In particular, the map $\Mes_{\xi^{\nu}}$ is $\nu$-measurable.
Proposition \ref{ergcond} immediately implies the following
\begin{corollary}
\label{mesxi}
For any $\nu\in M_{\erg} (\mathfrak T, \rho)$ we have $\nu(X_{\erg})=1$.
The equality
$$
\Mes_{\xi^{\nu}}=({\Int_{\Phi}})^{-1}\circ\Pi_{\Phi}
$$
holds $\nu$-almost surely.
\end{corollary}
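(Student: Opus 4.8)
The plan is to deduce the Corollary from Proposition \ref{ergcond} by matching two different expressions for the conditional expectation with respect to $\mathcal I_G^\nu$. I would prove the second assertion first, since the first follows from it. Fix $\nu \in \mtrho$. Because the limit defining $\mathcal A_\infty^\rho \varphi_k$ exists $\nu$-almost surely for each $k$ by the Reverse Martingale Convergence Theorem, and $\Phi$ is countable, we have $\nu(X(\Phi, \rho)) = 1$, so $\Pi_\Phi$ is defined $\nu$-almost everywhere.

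The heart of the argument is the chain of $\nu$-almost-sure identities
\[
\mathcal A_\infty^\rho \varphi_k(x) = \mathbb E(\varphi_k \mid \mathcal I_G^\nu)(x) = \mathbb E(\varphi_k \mid \mathcal B_{\xi^\nu})(x) = \mathcal A_{\xi^\nu} \varphi_k(x) = \int \varphi_k \, d\nu_{\mathcal C_\xi(x)}.
\]
The first equality combines the definition of $\mathcal A_\infty^\rho$ with the Reverse Martingale Convergence Theorem established above; the second uses the assertion of Proposition \ref{ergcond} that $\xi^\nu$ generates $\mathcal I_G^\nu$, so that the two conditioning $\sigma$-algebras agree $\nu$-almost surely; the third is Rohlin's identity $\mathbb E(f \mid \mathcal B_\xi) = \mathcal A_\xi f$; and the last is the definition of the averaging operator $\mathcal A_{\xi^\nu}$ through the canonical conditional measures. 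Since $\Phi$ is countable, all these identities hold simultaneously, for every $k$, off a single $\nu$-null set. Reading off coordinates, the right-hand side is exactly the $k$-th coordinate of $\Int_\Phi(\nu_{\mathcal C_\xi(x)}) = \Int_\Phi(\Mes_{\xi^\nu}(x))$, while the left-hand side is the $k$-th coordinate of $\Pi_\Phi(x)$. Thus $\Pi_\Phi(x) = \Int_\Phi(\Mes_{\xi^\nu}(x))$ for $\nu$-almost every $x$.

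It remains to invert. The map $\Int_\Phi$ is injective, and by Souslin's Theorem it is a Borel isomorphism of $\mergtrho$ onto the Borel set $\Int_\Phi(\mergtrho)$, so $(\Int_\Phi)^{-1}$ is well defined and Borel there. By Proposition \ref{ergcond} we have $\nu_{\mathcal C} \in \mergtrho$ for $\widetilde\nu$-almost every $\mathcal C$, hence $\Mes_{\xi^\nu}(x) = \nu_{\mathcal C_\xi(x)} \in \mergtrho$ for $\nu$-almost every $x$. Applying $(\Int_\Phi)^{-1}$ to the identity $\Pi_\Phi(x) = \Int_\Phi(\Mes_{\xi^\nu}(x))$ therefore gives $\Mes_{\xi^\nu} = (\Int_\Phi)^{-1} \circ \Pi_\Phi$ $\nu$-almost surely, which is the second assertion.

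Finally, the first assertion is immediate: for $\nu$-almost every $x$ we have $\Pi_\Phi(x) = \Int_\Phi(\Mes_{\xi^\nu}(x)) \in \Int_\Phi(\mergtrho)$, so $x \in \Pi_\Phi^{-1}(\Int_\Phi(\mergtrho)) = X_{\erg}$, whence $\nu(X_{\erg}) = 1$. The only genuine content beyond bookkeeping is carried by Proposition \ref{ergcond}, which identifies the fibres of $\xi^\nu$ with $\mathcal I_G^\nu$ and shows the conditional measures are ergodic; granted that, the main delicacy here is purely measure-theoretic, namely ensuring the countably many almost-sure identities hold on a common full-measure set and that the passage through $(\Int_\Phi)^{-1}$ is legitimate, both of which are secured by the injectivity of $\Int_\Phi$ together with Souslin's Theorem.
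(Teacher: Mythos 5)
Your proof is correct and follows essentially the same route as the paper: the paper derives Corollary \ref{mesxi} directly from Proposition \ref{ergcond}, and your chain of identities $\mathcal A_\infty^\rho \varphi_k = \mathbb E(\varphi_k \mid \mathcal I_G^\nu) = \int \varphi_k\, d\nu_{\mathcal C_\xi(x)}$, combined with the ergodicity of the conditional measures and the injectivity of $\Int_\Phi$, is precisely the deduction the paper leaves implicit in the word ``immediately.'' Your care about collecting the countably many null sets and about the legitimacy of applying $(\Int_\Phi)^{-1}$ via Souslin's Theorem matches the paper's own use of that theorem in defining $X_{\erg}$.
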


Denoting
\[
\overline \nu \;=\; (\Mes_{\xi^\nu})_*\, \nu \;=\; (\mes_{\xi^\nu})_*\, \widetilde \nu,
\]
we finally obtain an ergodic decomposition
\[
\nu \;=\; \int\limits_{\mathfrak M_{\erg}(\rho, \mathfrak T)} \eta\, d\overline \nu (\eta)
\]
for the measure $\nu$.
To complete the proof of the first two claims of Theorem \ref{ergdecstrcont} it remains to establish Proposition \ref{ergcond}.

\subsection{Proof of Proposition \ref{ergcond}.}

\subsubsection{Proof of the first claim.}

\begin{proof} On one hand, every element of the partition $\xi^{\nu}$ is by definition $G$-invariant.

Conversely, let $A$ be $G$-invariant. Our aim is to find a measurable set $A'$ which is a union of elements of the partition $\xi^{\nu}$ and satisfies
\[
\nu(A \triangle A') = 0.
\]
Take a sequence $\varphi_{n_k} \in \Phi$ such that
\[
\sup_{k \in \mathbb N, \, x \in X} \varphi_{n_k}(x) < +\infty
\]
and $\varphi_{n_k} \to \chi_A$ almost surely with respect to the measure $\nu$ as $k \to \infty$.

Now let
\[
R_A = \{ {\bf r}\in {\mathbb R}^{\mathbb N}, {\bf r}=(r_n),  \lim\limits_{k \to \infty} r_{n_k} = 1 \}
\]
and let
\[
A' = \bigcup_{{\bf r} \in R_A} X(\rho, \Phi, {\bf r}),
\]
\[
A'' = \{ x \in X: {\mathcal A}_\infty^\rho \chi_A (x) = 1 \}.
\]
Since $A$ is $G$-invariant, we have
\[
\nu(A \triangle A'') = 0.
\]
Since $$
\lim\limits_{k\to\infty}\varphi_{n_k}=\chi_A
$$
$\nu$-almost surely
and all functions are uniformly bounded, we have
\[
{\mathcal A}_\infty^\rho \, \varphi_{n_k} \to {\mathcal A}_\infty^\rho \chi_A
\]
almost surely as $k \to \infty$. It follows that
\[
\nu(A' \triangle A'') = 0,
\]
and, finally, we obtain
\[
\nu (A \triangle A') = 0,
\]
which is what we had to prove.

\end{proof}

\subsubsection{Proof of the second claim.}

\begin{proposition} For every $g \in G$, for $\bar{\nu}$-almost every $\mathcal{C} \in {\overline X}(\xi^{\nu})$ and $\nu_\mathcal{C}$-almost every $x \in X$ we have
\[
\frac{d\nu_{\mathcal{C}} \circ T_g}{d\nu_\mathcal{C}}(x) = \rho(g,x).
\]
\end{proposition}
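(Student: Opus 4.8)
The plan is to descend the Radon--Nikodym identity $\frac{d\nu\circ T_g}{d\nu}=\rho(g,\cdot)$ from the measure $\nu$ to each conditional measure $\nu_{\mathcal C}$, fibre by fibre, using the defining property of the canonical system of conditional measures. Fix $g\in G$. Writing $\frac{d\nu_{\mathcal C}\circ T_g}{d\nu_{\mathcal C}}(x)=\rho(g,x)$ in weak form and recalling that $\nu_{\mathcal C}\circ T_g(A)=\nu_{\mathcal C}(T_gA)$, the desired identity is equivalent to
\[
\int_X \varphi(T_{g^{-1}}x)\,d\nu_{\mathcal C}(x)=\int_X \varphi(x)\rho(g,x)\,d\nu_{\mathcal C}(x)
\]
for every bounded measurable $\varphi$. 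Let $\Phi$ be the countable family of Proposition \ref{setphi}; since $\Phi$ is measure-determining, it suffices to establish this identity, for $\widetilde\nu$-almost every $\mathcal C$, for each $\varphi\in\Phi$ separately. By the first claim of Proposition \ref{ergcond} the partition $\xi^\nu$ generates $\mathcal I^\nu_G$, so by Rohlin's theorem $\int_X F\,d\nu_{\mathcal C(x)}=\mathbb E(F\mid\mathcal I^\nu_G)(x)$ $\nu$-almost surely for every $F\in L_1(X,\nu)$. Consequently the fibrewise identity above, for a fixed $\varphi$, will follow once I show the $\nu$-almost sure equality of conditional expectations $\mathbb E(\varphi\circ T_{g^{-1}}\mid\mathcal I^\nu_G)=\mathbb E(\rho(g,\cdot)\varphi\mid\mathcal I^\nu_G)$; equivalently, once I show that
\[
\int_{\widehat B}\varphi(T_{g^{-1}}x)\,d\nu(x)=\int_{\widehat B}\varphi(x)\rho(g,x)\,d\nu(x)
\]
for every $G$-invariant Borel set $\widehat B$.

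The heart of the argument is a change-of-variables formula that is immediate from the definition of $\rho$. On indicators, $\int_X \chi_C(T_hx)\,d\nu(x)=\nu(T_{h^{-1}}C)=\int_C\rho(h^{-1},x)\,d\nu(x)$, and hence for every bounded measurable $f$ and every $h\in G$
\[
\int_X f(T_hx)\,d\nu(x)=\int_X f(x)\rho(h^{-1},x)\,d\nu(x).
\]
To evaluate the left-hand side of the previous display, note that $\chi_{\widehat B}$ is $G$-invariant, so $\chi_{\widehat B}(x)=\chi_{\widehat B}(T_{g^{-1}}x)$ for $\nu$-almost all $x$; therefore
\[
\int_{\widehat B}\varphi(T_{g^{-1}}x)\,d\nu(x)=\int_X(\chi_{\widehat B}\varphi)(T_{g^{-1}}x)\,d\nu(x).
\]
Applying the change-of-variables formula with $h=g^{-1}$ and $f=\chi_{\widehat B}\varphi$ turns the right-hand side into $\int_X\chi_{\widehat B}(x)\varphi(x)\rho(g,x)\,d\nu(x)=\int_{\widehat B}\varphi(x)\rho(g,x)\,d\nu(x)$, which is exactly what was required. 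All integrals here are finite because $\varphi$ is bounded and $\rho(g,\cdot)\in L_1(X,\nu)$, its integral being $1$.

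It remains to assemble the fibrewise conclusion. For each $\varphi\in\Phi$ the integrated identity just proved yields, via the conditional-measure representation, a $\widetilde\nu$-conull set of elements $\mathcal C$ on which $\int\varphi(T_{g^{-1}}x)\,d\nu_{\mathcal C}=\int\varphi\,\rho(g,\cdot)\,d\nu_{\mathcal C}$. Intersecting these countably many conull sets over $\varphi\in\Phi$ produces a single $\widetilde\nu$-conull set of $\mathcal C$ on which the equality holds for all $\varphi\in\Phi$, and since $\Phi$ is measure-determining the two finite measures $\nu_{\mathcal C}\circ T_g$ and $\rho(g,\cdot)\nu_{\mathcal C}$ then coincide, i.e. $\frac{d\nu_{\mathcal C}\circ T_g}{d\nu_{\mathcal C}}=\rho(g,\cdot)$ holds $\nu_{\mathcal C}$-almost surely. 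I expect the only delicate points to be bookkeeping ones: keeping the inverse $g^{-1}$ in the cocycle in its correct place, and organising the null sets so that a single exceptional set serves all test functions. Here the precise order of quantifiers in the statement is helpful, since $g$ is fixed before $\mathcal C$ is chosen; thus only countably many exceptional sets (one per $\varphi\in\Phi$) ever arise, and no simultaneous control over the uncountable group $G$ (which would require the fibrewise continuity of $\rho$) is needed for this proposition.
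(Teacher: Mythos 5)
Your proof is correct, but it takes a genuinely different route from the paper's. The paper's own argument is a two-line appeal to the \emph{uniqueness} of Rohlin's canonical system of conditional measures: one writes $\nu\circ T_g$ in two ways as an integral over the partition, namely $\nu\circ T_g=\int\nu_{\mathcal C}\circ T_g\,d\bar\nu(\mathcal C)$ and $\nu\circ T_g=\rho(g,\cdot)\,\nu=\int\rho(g,\cdot)\,\nu_{\mathcal C}\,d\bar\nu(\mathcal C)$, observes that both families of fibre measures live on the $G$-invariant elements $\mathcal C$, and concludes from uniqueness that $\nu_{\mathcal C}\circ T_g=\rho(g,\cdot)\,\nu_{\mathcal C}$ for $\bar\nu$-almost every $\mathcal C$. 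You avoid invoking uniqueness altogether: you verify the defining (conditional-expectation) property directly, reducing via the countable family $\Phi$ to one test function at a time, and proving $\mathbb E(\varphi\circ T_{g^{-1}}\mid\mathcal I_G^\nu)=\mathbb E(\rho(g,\cdot)\varphi\mid\mathcal I_G^\nu)$ by the change-of-variables identity $\int f(T_hx)\,d\nu=\int f\,\rho(h^{-1},\cdot)\,d\nu$ together with strict $G$-invariance of the sets integrated over; your use of the first claim of Proposition \ref{ergcond} is legitimate, since the paper establishes that claim before the present statement. Both arguments rest on the same two pillars --- invariance of the elements of $\xi^\nu$ and the relation $\nu\circ T_g=\rho(g,\cdot)\,\nu$ --- but yours is more elementary and self-contained: it makes explicit the null-set bookkeeping (one exceptional set per $\varphi\in\Phi$, permissible precisely because $g$ is fixed before $\mathcal C$, as you note) and the normalization facts that the paper's appeal to uniqueness silently absorbs (that the quotient measure of $\nu\circ T_g$ is again $\bar\nu$, and that the fibre masses $\int\rho(g,\cdot)\,d\nu_{\mathcal C}$ equal $1$ almost everywhere). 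The price is length; the paper's version is shorter but asks the reader to check that both families are genuine canonical systems for $\nu\circ T_g$. One small point you should make explicit: in the last step you call $\rho(g,\cdot)\,\nu_{\mathcal C}$ a finite measure, and this is indeed true for $\bar\nu$-almost every $\mathcal C$ because $\rho(g,\cdot)\in L_1(X,\nu)$ and Rohlin's theorem then gives $\rho(g,\cdot)\in L_1(X,\nu_{\mathcal C})$ almost surely --- this finiteness is what licenses the dominated-convergence passage from $\Phi$ to all bounded measurable test functions.
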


\begin{proof} This is immediate from the uniqueness of the canonical system of conditional measures. Indeed, on the one hand, we have
\[
\nu \circ T_g = \displaystyle\int\limits_{{\overline X}(\xi^{\nu})} \nu_\mathcal{C} \circ T_g \, d\bar{\nu}(\mathcal{C});
\]
on the other hand,
\[
\nu \circ T_g = \rho(g,x) \cdot \nu = \displaystyle\int\limits_{{\overline X}(\xi^{\nu})} \rho(g,x) \cdot \nu_{\mathcal{C}} \, d\bar{\nu} (\mathcal{C}),
\]
whence $\nu_\mathcal{C} \cdot T_g = \rho(g,x) \cdot \nu_\mathcal{C}$ for $\widetilde \nu$-almost all $\mathcal{C} \in {\overline X}(\xi^{\nu})$,
 and the Proposition is proved.
\end{proof}

Fibrewise continuity of the cocycle is necessary to pass from a countable dense subgroup to the whole group.

\begin{proposition} Let $\rho$ be a positive Borel fibrewise continuous cocycle over a measurable action ${\mathfrak T}_K$
of a compact group $K$ on a standard Borel space $(X,\mathcal B)$. Let $\nu$ be a Borel probability measure on $X$.
Let $K' \subset K$ be dense, and assume that the equality
\begin{equation} \label{eq:myeq01}
\frac{d\nu \circ T_k}{d\nu} = \rho(k,x)
\end{equation}
holds for all $k \in K'$. Then $\nu\in {\mathfrak M} ({\mathfrak T}_K, \rho)$.
\end{proposition}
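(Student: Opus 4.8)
The plan is to fix $k\in K$ and a Borel set $A\subset X$ and to establish the single identity $\nu(T_kA)=\int_A\rho(k,x)\,d\nu(x)$; letting $k$ and $A$ range over all possibilities this is precisely the assertion $\nu\in\mathfrak M(\mathfrak T_K,\rho)$, and by hypothesis it already holds for every $k\in K'$. Since $K$ is compact metrizable and $K'$ is dense, I choose a sequence $k_j\in K'$ with $k_j\to k$, so that $\nu(T_{k_j}A)=\int_A\rho(k_j,x)\,d\nu(x)$ for each $j$, and I try to pass to the limit on both sides. The hypothesis will be used throughout in the equivalent form $\int_X h(T_{k'^{-1}}x)\,d\nu(x)=\int_X h(x)\rho(k',x)\,d\nu(x)$, valid for all bounded measurable $h$ and all $k'\in K'$.

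The right–hand side is governed by the total mass $m(k):=\int_X\rho(k,x)\,d\nu(x)$. By fibrewise continuity $\rho(k_j,x)\to\rho(k,x)$ for every $x$, so Fatou's lemma shows that $m$ is lower semicontinuous; since $m\equiv 1$ on the dense set $K'$ this already gives $m\le 1$ everywhere. The essential point is a mass identity coming from the cocycle relation: applying the hypothesis for $k'\in K'$ to $h(x)=\rho(k,T_{k'}x)$, and using $h(T_{k'^{-1}}x)=\rho(k,x)$ together with $\rho(k,T_{k'}x)\rho(k',x)=\rho(kk',x)$, yields $m(kk')=m(k)$ for all $k\in K$, $k'\in K'$. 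Thus $m$ is right $K'$-invariant, and combining this with lower semicontinuity I conclude $m\equiv 1$: given $k$, choose $\kappa\in K'$ and $k'_j\in K'$ with $kk'_j\to\kappa$, so that $1=m(\kappa)\le\liminf_j m(kk'_j)=m(k)\le 1$. With $m\equiv 1$ established, Scheffé's lemma upgrades the pointwise convergence $\rho(k_j,\cdot)\to\rho(k,\cdot)$ to convergence in $L_1(X,\nu)$, whence $\int_A\rho(k_j,\cdot)\,d\nu\to\int_A\rho(k,\cdot)\,d\nu$.

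The main obstacle is the left–hand side: one must show $\nu(T_{k_j}A)\to\nu(T_kA)$. For a merely Borel action the orbit maps $k\mapsto T_kx$ need not be continuous, so $\chi_A\circ T_{k_j^{-1}}$ need not converge and $\nu(T_{k_j}A\,\triangle\,T_kA)$ need not tend to $0$; this is the one place where continuity of the \emph{action}, rather than of the cocycle, would be convenient. I would circumvent it without assuming such continuity by disintegration over orbits. The orbit equivalence relation of the compact group $K$ is smooth, so $\nu$ disintegrates over the partition of $X$ into $K$-orbits, and (by uniqueness of conditional measures, exactly as in the preceding Proposition) the conditional measures inherit $K'$-quasi-invariance with cocycle $\rho$. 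On a single orbit $Kx\cong K/K_x$ the continuous cocycle $\rho$ determines all Radon–Nikodym derivatives and therefore pins down such a conditional measure up to a scalar, namely as the $\rho$-twisted push-forward of Haar measure; this measure is manifestly quasi-invariant under \emph{all} of $K$ with cocycle $\rho$, and integrating the fibrewise identities over the orbit space gives $\nu\circ T_k=\rho(k,\cdot)\,\nu$ for every $k$. (Alternatively one may invoke Varadarajan's theorem to replace $\mathfrak T_K$ by a continuous realization of the compact, hence locally compact, group $K$, for which the Koopman action on $L_1(X,\nu)$ is strongly continuous and the left–hand limit is immediate.) A purely Borel indication that the obstruction is harmless on the invariant data is the averaging cancellation: for bounded $\psi$ the function $g_\psi(x)=\int_K\psi(T_tx)\rho(t,x)\,d\mu_K(t)$ satisfies $g_\psi(T_sx)\rho(s,x)=g_\psi(x)$ for all $s$, so that $\int_X g_\psi(T_kx)\rho(k,x)\,d\nu=\int_X g_\psi\,d\nu$ for every $k$ with no limiting argument at all, leaving only the within-orbit statement to be settled.
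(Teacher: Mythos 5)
Your treatment of the right-hand side is correct and in fact slicker than the paper's: the mass identity $m(kk')=m(k)$ extracted from the cocycle relation, Fatou's lemma for lower semicontinuity of $m$, and Scheff\'e's lemma give $L_1(\nu)$-convergence $\rho(k_j,\cdot)\to\rho(k,\cdot)$, whereas the paper handles this side by Fatou plus a truncation over the $K$-invariant sets $\{x:\max_{k\in K}\rho(k,x)\le N\}$. You also correctly isolate where the real difficulty lies, namely in the left-hand side for a merely Borel action. But your main route does not resolve that difficulty; it relocates it. The central claim --- that a probability measure on a single orbit which is quasi-invariant under the countable dense subgroup $K'$ with cocycle $\rho$ is thereby ``pinned down'' as the $\rho$-twisted push-forward of Haar measure --- is exactly the dense-to-full upgrade in miniature, and you assert it rather than prove it. $K'$-quasi-invariance of a conditional measure $\lambda$ controls its Radon--Nikodym behaviour only along the countable set $K'$, which is Haar-null; to compare $\lambda$ with the twisted Haar measure one needs, for instance, the identity $\int\varphi(T_kx)\rho(k,x)\,d\lambda=\int\varphi\,d\lambda$ for $\mu_K$-almost every $k\in K$ (this is what drives the Fubini argument in the paper's averaging Lemma), and nothing in the hypotheses supplies it for $k\notin K'$. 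The standard way to prove this single-orbit rigidity is to identify the orbit $K$-equivariantly with the homogeneous space $K/K_x$ --- which requires knowing that stabilizers of Borel actions are closed and a Lusin--Souslin argument to see that the identification is a Borel isomorphism --- so that the action becomes continuous and the upgrade follows from weak-$*$ continuity. That is a continuous-realization theorem of precisely the same nature as the Varadarajan theorem you set out to avoid; likewise, the smoothness of the orbit partition, which you also assert without proof, is usually obtained from such a realization or from the orbital-measure map $x\mapsto\int_K\delta_{T_kx}\,d\mu_K(k)$.

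Your parenthetical fallback is, in substance, the paper's actual proof: apply Varadarajan's theorem to the compact group $K$ and assume the action continuous on a compact metric space. Two corrections to how you phrase it, though. First, the Koopman operators $f\mapsto f\circ T_k$ are not even well defined on $L_1(X,\nu)$-classes for $k\notin K'$ until one knows $\nu\circ T_k\ll\nu$, which is the very statement being proved, so ``strong continuity of the Koopman action'' cannot be the engine. Second, even for a continuous action the map $k\mapsto\nu(T_kA)$ need not be continuous for a fixed Borel set $A$ (take $\nu$ a point mass and $A$ a singleton), so your opening plan of passing to the limit in $\nu(T_{k_j}A)=\int_A\rho(k_j,x)\,d\nu$ for a fixed $A$ fails as stated. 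The correct formulation, and the paper's, is weak convergence: for continuous $f$ one has $\int f\,d(\nu\circ T_{k_j})\to\int f\,d(\nu\circ T_k)$ by continuity of the action and dominated convergence, while your Scheff\'e step gives $\int f\,\rho(k_j,\cdot)\,d\nu\to\int f\,\rho(k,\cdot)\,d\nu$; equating the limits over all continuous $f$ identifies the Borel measures $\nu\circ T_k$ and $\rho(k,\cdot)\,\nu$. With that adjustment, Varadarajan's theorem together with your Scheff\'e argument yields a complete proof; without it, both of your routes leave the same essential step unproved.
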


\begin{proof} We start by recalling the following Theorem of Varadarajan (Theorem 3.2 in \cite{Var}).
\begin{theorem}[Varadarajan]
\label{varad}
Assume that a locally compact second countable group $K$ acts measurably on a standard Borel space $(X,\mathcal B)$.
There exists a compact metric space $Z$, a continuous action of $K$ on $Z$
and a $K$-invariant Borel subset $Z^{\prime}\subset Z$ such that the restricted action of $K$ on $Z^{\prime}$ is measurably
isomorphic to the action of $K$ on $(X,\mathcal B)$.
\end{theorem}

{\bf Question.} Under what assumptions does the same conclusion hold for Borel actions of inductively compact groups?

 We apply Varadarajan's Theorem to the action of our compact group $K$.
 Passing, if necessary, to the larger space given by the theorem,
 we may  assume that $X$ is a compact metric space, $\nu$ a Borel probability measure, and that the action of $K$ on $X$ is continuous. Consequently, if $k_n \to k_\infty$ in $K$ as $n \to \infty$, then
\[
\nu \circ T_{k_n} \to \nu \circ T_{k_\infty}
\]
weakly in the space of Borel probability measures on $X$. It remains to show that the measures $\nu = \rho(k_n,x) \cdot \nu$ weakly converge to the measure $\rho(k,x)\cdot \nu$ as $n \to \infty$, and the equality $\nu \circ T_{k_\infty} = \rho(k_\infty, x) \cdot \nu$ will be established. First of all, observe that the function
\[
\rho_{\max}(x) = \max_{k \in K} \rho(k,x)
\]
is well-defined and measurable in $X$ (since, by continuity, the maximum can be replaced by the supremum over a countable dense set). We shall show that for any bounded measurable function $\psi$ on $X$ we have
\[
\lim_{n \to \infty} \displaystyle\int\limits_X \psi (x) \rho(k_n,x) \, d\nu(x) = \displaystyle\int\limits_X \psi(x) \rho(k_\infty,x) \, d\nu(x).
\]
Assume $\psi$ satisfies $0 \le \psi \le 1$. For every $x \in X$ we have
\[
\lim_{n \to \infty} \rho(k_n,x) = \rho(k_\infty,x).
\]
By Fatou's Lemma,
\[
\displaystyle\int \psi (x) \rho(k_\infty,x) \, d\nu (x) \le \lim_{n \to \infty} \inf \displaystyle\int \psi (x) \rho(k_n,x) \, d\nu(x).
\]
For $N>0$ set $X_N = \{ x : \rho_{\max} (x) \le N \}$. Take $\varepsilon > 0$ and choose $N$ large enough in such a way that we have
\[
\nu(X\setminus X_N) < \varepsilon, \quad \displaystyle\int\limits_{X\setminus X_N} \psi (x) \rho(k_\infty,x) \, d\nu (x) < \varepsilon.
\]
Observe that since $X_n$ is $K$-invariant, for all $n \in \mathbb{N}$ we have
\[
\displaystyle\int\limits_{X\setminus X_N} \psi(x) \rho(k_n,x) \, d\nu(x) \le \nu \circ T_{k_n} (X\setminus X_N) = \nu(X \setminus X_N) < \varepsilon.
\]
By the bounded convergence theorem, we have
\[
\lim_{n \to \infty} \displaystyle\int\limits_{X_N} \psi (x) \rho(k_n,x) \, d\nu(x) = \displaystyle\int\limits_{X_N} \psi(x) \rho(k_\infty,x) \, d\nu(x),
\]
whence
\[
\displaystyle\int\limits_X \psi(x) \rho(k_\infty,x) \, d\nu(x) \ge \lim_{n \to \infty} \sup \displaystyle\int \psi (x) \rho(k_n,x)\, d\nu(x) -3\varepsilon.
\]
Since $\varepsilon$ is arbitrary, the proposition is proved.

We return to the
proof of the second claim of Proposition \ref{ergcond}.

 First, take $n_0 \in \mathbb{N}$ and show that for $\widetilde \nu$-almost every $\mathcal C$ and all $k \in K(n_0)$ we have
\begin{equation} \label{eq:myeq02}
\frac{d \nu_{\mathcal C} \circ T_k}{d \nu_{\mathcal C}} = \rho(k,x).
\end{equation}

Choose a countable dense subgroup $K' \subset K(n_0)$. The equality \eqref{eq:myeq02} holds for all $k \in K'$ and for $\widetilde \nu$-almost all $\mathcal C$. But then fibrewise continuity of the cocycle $\rho$ implies that \eqref{eq:myeq02} holds also for all $k \in K(n_0)$.  Consequently, $\nu_{\mathcal C}\in {\mathfrak M}(\rho, {\mathfrak T})$ for $\widetilde \nu$-almost all $\mathcal C$.
Now, by definition of the partition $\xi$,  for every $\varphi \in \Phi$ we have
\[
\mathcal A_\infty^\rho = \displaystyle\int \varphi \, d\nu_\mathcal C
\]
almost surely with respect to $\nu_\mathcal C$ (indeed, the function $\mathcal A_\infty^\rho \varphi$ is almost surely constant in restriction to $\mathcal C$, but then the constant must be equal to the average value).

Since $\Phi$ is dense in $L_1 (X, \nu_\mathcal C)$, and $\nu_\mathcal C\in {\mathfrak M}({\mathfrak T}, \rho)$, we conclude that $\nu_\mathcal{C}$ is ergodic for $\widetilde \nu$-almost every $\mathcal C$, and the Proposition is proved completely.

\end{proof}
\subsection{Uniqueness of the ergodic decomposition.}

Consider the map $$\Mes: \mtrho\to {\mathfrak M}(\mergtrho)$$ that
to a measure $\nu\in \mtrho$ assigns the measure
$$
{\overline \nu}= \Mes(\nu)=\left(\Mes_{\xi^{\nu}}\right)_*\nu.
$$
By definition, we have
\begin{equation}
\label{ergdec7}
\nu=\displaystyle\int\limits_{\mergtrho} \eta d{\overline \nu}(\eta).
\end{equation}
Conversely, introduce a map $ED: {\mathfrak M}(\mergtrho)\to \mtrho$ which takes a measure ${\overline \nu}\in {\mathfrak M}(\mergtrho)$ to the measure $\nu$ given by the formula (\ref{ergdec7}).

We now check that the maps $ED$ and $\Mes$ are both Borel measurable and are inverses of each other.
It is clear by definition that the map $ED$ is Borel measurable and that $ED\circ \Mes=Id$.
We proceed to the proof of the remaining claims.

First we check that the map $\Mes$ is Borel measurable. Indeed, take $\alpha_1, \alpha_2\in {\mathbb R}$,
take  a set $A\in {\mathcal B}(X)$ and consider the set ${\tilde A}_{\alpha_1, \alpha_2}\subset {\mathfrak M}(\mergtrho)$ given by the formula
$$
{\tilde A}_{\alpha_1, \alpha_2}=\{{\overline \nu}\in {\mathfrak M}(\mergtrho): {\overline \nu}\left(\{\eta\in \mergtrho: \eta(A)>\alpha_1 \}
\right)>\alpha_2
\}.
$$
It is clear that
$$
\left(\Mes\right)^{-1}\left( {\tilde A}_{\alpha_1, \alpha_2}\right)=
\{{\nu}\in \mtrho: { \nu}\left(\{x\in X: {\mathcal A}_{\infty}^{\rho}\chi_A(x)>\alpha_1 \}
\right)>\alpha_2
\},
$$
and measurability of
the map $\Mes$ is proved.

It remains to show that for a given measure $\nu\in\mtrho$ there is only one measure ${\overline \nu}\in {\mathfrak M}(\mergtrho)$
such that $\nu=ED({\overline \nu})$ --- namely, ${\overline \nu}=\Mes(\nu)$.
To prove this invertibility of the map $ED$ it suffices to establish the following
\begin{proposition}
Let ${\overline \nu}_1, {\overline \nu}_2\in {\mathfrak M}(\mergtrho)$. If  ${\overline \nu}_1\perp {\overline \nu}_2$, then also
$ED({\overline \nu}_1)\perp ED({\overline \nu}_2)$.
\end{proposition}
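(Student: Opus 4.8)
The plan is to transport a set separating $\overline{\nu}_1$ and $\overline{\nu}_2$ on $\mergtrho$ back to $X$ through the Borel map $\pi$ furnished by Theorem \ref{ergdecstrcont}, and to check that its preimage already separates $ED(\overline{\nu}_1)$ from $ED(\overline{\nu}_2)$. Since $\overline{\nu}_1 \perp \overline{\nu}_2$ and both are probability measures on $\mergtrho$, there is a Borel set $E \subset \mergtrho$ with $\overline{\nu}_1(E) = 1$ and $\overline{\nu}_2(E) = 0$. First I would set $W = \pi^{-1}(E)$; as $\pi$ is Borel, $W$ is a Borel subset of $X$, and the function $\eta \mapsto \eta(W)$ is Borel measurable, so the weak integral defining $ED$ may be evaluated at $W$.

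The crux of the argument is the pointwise identity $\eta(W) = \chi_E(\eta)$, valid for \emph{every} $\eta \in \mergtrho$. Here I would invoke the first claim of Theorem \ref{ergdecstrcont}, by which each ergodic component is concentrated on its own fibre, $\eta(\pi^{-1}(\eta)) = 1$. If $\eta \in E$, then $\pi^{-1}(\eta) \subset \pi^{-1}(E) = W$, whence $\eta(W) = 1$. If $\eta \notin E$, then, because the fibres of the function $\pi$ are pairwise disjoint and $\eta' \neq \eta$ for every $\eta' \in E$, the set $W = \pi^{-1}(E)$ is disjoint from $\pi^{-1}(\eta)$; since $\eta$ lives on $\pi^{-1}(\eta)$, this forces $\eta(W) = 0$. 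This all-or-nothing behaviour of each ergodic component on the pulled-back set is the only point demanding care, and it is settled entirely by the structural properties already encoded in $\pi$ via Proposition \ref{ergcond}.

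It then remains to substitute this identity into the defining formula (\ref{ergdec7}) for $ED$, understood in the weak sense (\ref{dec2}). I would write
\[
ED(\overline{\nu}_1)(W) = \int\limits_{\mergtrho} \eta(W)\, d\overline{\nu}_1(\eta) = \int\limits_{\mergtrho} \chi_E\, d\overline{\nu}_1(\eta) = \overline{\nu}_1(E) = 1,
\]
and, in exactly the same way, $ED(\overline{\nu}_2)(W) = \overline{\nu}_2(E) = 0$. Since $ED(\overline{\nu}_1)$ and $ED(\overline{\nu}_2)$ are probability measures, the first equality gives $ED(\overline{\nu}_1)(X \setminus W) = 0$, so $ED(\overline{\nu}_1)$ is concentrated on $W$ while $ED(\overline{\nu}_2)$ is concentrated on $X \setminus W$. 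Hence $ED(\overline{\nu}_1) \perp ED(\overline{\nu}_2)$, which is what the Proposition asserts. The argument is short precisely because the genuine difficulty—that distinct ergodic components are carried by disjoint full-measure fibres—has been absorbed into the construction of $\pi$; the present step merely harvests that fact.
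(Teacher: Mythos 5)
Your proof is correct, and its core idea---pulling back a set separating $\overline{\nu}_1$ from $\overline{\nu}_2$ through the map that assigns to a point its ergodic component, then exploiting that each ergodic measure is concentrated on its own fibre---is the same as the paper's. The implementation differs in one respect worth noting: the paper introduces the auxiliary measure $\nu_0 = ED\left((\overline{\nu}_1+\overline{\nu}_2)/2\right)$ and pulls the disjoint supporting sets back through the Rohlin partition map $\Mes_{\xi^{\nu_0}}$, which is measure-dependent, defined only $\nu_0$-almost surely, and merely $\nu_0$-measurable; you instead use the globally defined Borel map $\pi$ (in the paper's notation, $(\Int_\Phi)^{-1}\circ\Pi_\Phi$ restricted to $X_{\erg}$), so your separating set $W=\pi^{-1}(E)$ is genuinely Borel and the all-or-nothing identity $\eta(W)=\chi_E(\eta)$ holds pointwise for \emph{every} $\eta\in\mergtrho$---a step the paper compresses into ``by definition.'' Your version thus dispenses with the auxiliary measure and makes explicit exactly what the paper's argument rests on. One caveat you should state: since this Proposition is precisely the missing ingredient for claim (3) of Theorem \ref{ergdecstrcont} (invertibility of $ED$), you may only invoke the parts of that theorem already established at this stage, namely the construction of $\pi$ and claims (1)--(2), which the paper derives from Proposition \ref{ergcond} and Corollary \ref{mesxi} before turning to uniqueness. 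Your argument in fact uses only claim (1) and the weak-integral definition of $ED$, so there is no circularity, but it would be cleaner to cite Corollary \ref{mesxi} (or the already-proved existence part) rather than the theorem wholesale.
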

\begin{proof}
Let $\nu_0= ED(({\overline \nu}_1+{\overline \nu}_2)/2)$, and let $A_1, A_2\subset {\mathfrak M}(\mergtrho)$ be disjoint sets satisfying $$
{\overline \nu}_1(A_1)={\overline \nu}_2(A_2)=1; \ {\overline \nu}_1(A_2)={\overline \nu}_2(A_1)=0.
$$
The sets $X_1=\left(\Mes_{\xi^{\nu_0}}\right)^{-1}(A_1)$,  $X_2=\left(\Mes_{\xi^{\nu_0}}\right)^{-1}(A_2)$ are then disjoint and $\nu_0$-measurable. Furthermore, by definition we have
$$
ED({\overline \nu}_1)(X_1)=ED({\overline \nu}_2)(X_2)=1; \ ED({\overline \nu}_1)(X_2)=ED({\overline \nu}_2)(X_1)=0,
$$
whereby the Proposition is proved and the uniqueness of the ergodic decomposition is fully established.
\end{proof}

\section{Proof of Theorem \ref{ergdecinfpcl}}
In the proof of Corollary \ref{infdec} we have constructed an ergodic decomposition

\begin{equation}
\label{nuf2}
 \nu=\displaystyle\int\limits_{{\mathfrak M}^{\infty}_{f,1,erg}} \eta\, d\tilde \nu (\eta),
\end{equation}
where the measure $\tilde \nu\in {\mathfrak M}({\mathfrak M}^{\infty}_{f,1,\erg})$ is automatically admissible.

Given any positive measurable function $\varphi: {\mathbb P}{\mathfrak M}^{\infty}\to {\mathbb R}_{>0}$, we can deform the decomposition
(\ref{nuf2}) by writing
\begin{equation}
\label{nuf3}
 \nu=\displaystyle\int\limits_{{\mathfrak M}^{\infty}_{f,1,erg}} \frac{\eta}{\varphi({\bf p}(\eta))}\, \varphi({\bf p}(\eta))d\tilde \nu (\eta).
\end{equation}

Conversely, for any $\sigma$-finite measure $\nu^{\prime}\in {\mathfrak M}^{\infty}({\mathbb P}{\mathfrak M}^{\infty})$
satisfying $[\nu^{\prime}]=[{\bf p}_*{\tilde \nu}]$, we can immediately give a measure
${\check \nu}\in {\mathfrak M}^{\infty}({\mathfrak M}^{\infty})$ such that ${\bf p}_*{\check \nu}=\tilde\nu$ and
$$
\nu=\displaystyle\int\limits_{{\mathfrak M}^{\infty}} \eta\, d\check \nu (\eta).
$$

Since $\tilde\nu$ is admissible, the measure $\check \nu$ with the desired properties is clearly unique.

To complete the proof, we must now show that the measure class $[{\bf p}_*\overline \nu]$
is the same for all admissible measures $\overline \nu$ occurring
in the ergodic decomposition of the given measure $\nu$.

Recall that the map
$
P_f: \, \mathfrak M^\infty_f \longrightarrow \mathfrak M
$ is defined
by the formula
$$
P_f (\nu) \; = \; \frac{f\nu}{\nu(f)}.
$$
For $\lambda \in \mathbb R_+$ we clearly have
\[
P_f (\lambda \nu) \; = P_f(\nu).
\]
The map $P_f$ therefore induces a map from $\mathbb P\mathfrak M^\infty_f$ to $\mathfrak M$, for which we keep the same symbol.

The map $P_f: \mathbb P\mathfrak M^\infty_f \rightarrow \mathfrak M$ is invertible: the inverse is the map that to a measure $\nu \in \mathfrak M$ assigns the projective equivalence class of the measure $\frac{\nu}{f}$.

By definition, given any ergodic decomposition
\[
\nu \;=\; \int\limits_{\mathfrak M^\infty} \eta \, d \tilde \nu(\eta)
\]
of a measure $\nu \in \mathfrak M_f^\infty$, for the measure $\tilde \nu \in \mathfrak M( \mathfrak M^\infty)$ we have
\[
\tilde \nu ( \mathfrak M_{f, \erg}^\infty) \;=\;1.
\]
Take therefore an ergodic decomposition
\begin{equation} \label{ergdecnu}
\nu \;=\; \int\limits_{\mathfrak M_{\erg, g}^\infty} \eta\, d \tilde \nu (\eta).
\end{equation}
Applying the map $P_f$, write
\begin{equation} \label{pferg}
P_f \nu \;=\; \int\limits_{\mathfrak M_{\erg, f}^\infty} P_f \,\eta \cdot \frac{\eta(f)}{\nu(f)}\, d\tilde \nu(\eta).
\end{equation}
The measure
\[
\frac{\eta(f)}{\nu(f)} \, d\tilde \nu (\eta)
\]
is a probability measure on $\mathfrak M_{\erg,f}^\infty$ since so is $P_f \eta$ for any $\eta \in \mathfrak M_f^\infty$.

Introduce a measure $\check \nu \,\in\, \mathfrak M(\mathfrak M^\infty_{\erg, f})$ by the formula
\[
d \check \nu (\eta) \;=\; \frac{\eta(f)}{\nu(f)}\, d\tilde \nu(\eta)
\]
and rewrite (\ref{pferg}) as follows:
\begin{equation} \label{pfstar}
P_f \nu \;=\; \int\limits_{\mathfrak M} \eta \, d \left( ( P_f)_* \,\check \nu\right).
\end{equation}
By definition, the formula (\ref{pfstar}) yields an ergodic decomposition of the measure $P_f\nu \in \mathfrak M(\mathfrak T, \rho_f)$, indeed, the measure $(P_f)_*\, \check \nu$ is by definition supported on $\mathfrak M_{\erg} (\mathfrak T, \rho_f)$. Since ergodic decomposition is unique in $\mathfrak M(\mathfrak T, \rho_f)$, we obtain that the measure $(P_f)_*\, \check \nu$ does not depend on a specific initial ergodic decomposition (\ref{ergdecnu}).

From the clear equality $[\check \nu] = [\tilde \nu]$ it follows that
\[
[(P_f)_* \, \check \nu\,] \;=\; [(P_f)_* \, \tilde \nu \,],
\]
and, consequently, the measure class $[(P_f)_* \, \tilde \nu \,]$ does not depend on the specific choice of an ergodic decomposition (\ref{ergdecnu}).

Now recall that the map $P_f$ induces a Borel isomorphism between Borel spaces $\mathbb P\mathfrak M_f^\infty$ and $\mathfrak M$. Since the measure class $[(P_f)_* \, \tilde \nu \,]$ does not depend on the specific choice of an ergodic decomposition, the same is also true for the measure class $[ {\bf p_* \,\tilde \nu} \,]$. 
The Proposition is proved completely.

\subsection{Finite and infinite ergodic components.}
 
Ergodic components of an infinite $G$-invariant measure can be both finite and infinite, 
and the preceding results immediately imply the following description of 
the sets on which finite and infinite ergodic componets of an inifnite invariant measure are supported.

\begin{corollary}

Let $\mathfrak T$ be a measurable action of an inductively compact group $G$ on a standard Borel space $(X, \mathcal B)$, 
and let $\nu$ be a $\sigma$-finite $\mathfrak T$-invariant Borel measure on $X$ such that the space $L_1(X, \nu)$ contains a positive
Borel measurable fibrewise continuous function.
There exist two disjoint Borel $G$-invariant subsets $X_1$, $X_2$ of $X$ satisfying $X_1\cup X_2=X$ and such that 
the following holds.
\begin{enumerate}
\item There exists a family $Y_n$ of  Borel $G$-invariant subsets satisfying $\nu(Y_n)<+\infty$ and such that 
$$
X_1=\bigcup\limits_n Y_n.
$$
If $Y$ is a  Borel $G$-invariant subset  satisfying $\nu(Y)<+\infty$, then $\nu(X_1\setminus Y )=0$.
With respect to any ergodic decomposition, almost all ergodic components of the measure $\nu|_{X_1}$ are finite.

\item If $\varphi$ is a bounded measurable function, supported on $X_2$ and 
square-integrable with respect to $\nu$, then for the corresponding sequence of averages we have 
${\mathcal A}_n\varphi\to 0$ in $L_2(X, \nu)$. 
With respect to any ergodic decomposition, almost all ergodic components of the measure $\nu|_{X_2}$ are infinite.
\end{enumerate}
\end{corollary}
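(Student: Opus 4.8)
The plan is to split $X$ according to whether the ergodic component through a point carries finite or infinite mass, and to read off this dichotomy from the single integrable function $f$. Normalize $\nu(f)=1$ and recall the ergodic decomposition $f\nu=\int_{\mergtrhof}\eta\,d\check\nu(\eta)$ of the finite measure $f\nu$ produced in Theorem \ref{ergdecstrcont}; for $f\nu$-almost every $x$ the conditional measure $\eta=\eta_{\mathcal C(x)}$ is an ergodic element of $\mathfrak M(\mathfrak T,\rho_f)$, and the corresponding ergodic component of $\nu$ is $\eta/f$, whose total mass is $\eta(1/f)$. Since $1/f$ need not be $f\nu$-integrable, I would define
\[
h \;=\; \lim_{N\to\infty}\mathcal A_\infty^{\rho_f}\bigl(\min(1/f,N)\bigr),
\]
an increasing limit of bounded averages; by Proposition \ref{ergcond} applied to $f\nu$ one has $h(x)=\eta_{\mathcal C(x)}(1/f)\in(0,+\infty]$, so $h$ is a Borel, $G$-invariant function equal to the mass of the ergodic component through $x$. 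I then set
\[
X_1=\{h<+\infty\},\qquad X_2=\{h=+\infty\},
\]
assigning the $\nu$-null set where the limit diverges to $X_1$, so that $X_1,X_2$ are disjoint $G$-invariant Borel sets with $X_1\cup X_2=X$, and by construction the ergodic components meeting $X_1$ are finite while those meeting $X_2$ are infinite.

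For claim (1) the key identity is that for every $G$-invariant Borel set $B$ one has $\nu(B)=\int_B h\,d(f\nu)$: indeed $\nu(B)=\int_B(1/f)\,d(f\nu)$ because $f>0$, and replacing $1/f$ by $\min(1/f,N)$, using that $\mathcal A_\infty^{\rho_f}$ is the conditional expectation onto $\mathcal I_G$ for $f\nu$ (so $\int_B\min(1/f,N)\,d(f\nu)=\int_B\mathcal A_\infty^{\rho_f}(\min(1/f,N))\,d(f\nu)$ on the invariant set $B$), and letting $N\to\infty$ by monotone convergence, yields the identity. Applying it to $B=Y_N:=\{h\le N\}$ gives $\nu(Y_N)\le N(f\nu)(Y_N)\le N<+\infty$, and $X_1=\bigcup_N Y_N$, which proves the first assertion of (1). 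Applying it to $B=Y\cap X_2$ for an arbitrary $G$-invariant $Y$ with $\nu(Y)<+\infty$ gives $\nu(Y\cap X_2)=\int_{Y\cap X_2}h\,d(f\nu)=(+\infty)\cdot(f\nu)(Y\cap X_2)$; finiteness of $\nu(Y)$ forces $(f\nu)(Y\cap X_2)=0$, whence $\nu(Y\cap X_2)=0$ since $f>0$. This is precisely the maximality of $X_1$: every finite-measure invariant set lies in $X_1$ modulo a $\nu$-null set.

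For claim (2) I would pass to $L_2(X,\nu)$. Because $\nu$ is invariant, the averaging operators $\mathcal A_n$ (taken with respect to the trivial cocycle) are, by Lemma \ref{avcomp}, the conditional expectations $\mathbb E(\cdot\mid\mathcal I_{K(n)}^\nu)$, that is, the orthogonal projections of $L_2(X,\nu)$ onto the $K(n)$-invariant functions; by the reverse martingale convergence theorem $\mathcal A_n\varphi\to\mathbb E(\varphi\mid\mathcal I_G^\nu)$ in $L_2$, and the limit is the orthogonal projection of $\varphi$ onto the closed subspace of $G$-invariant $L_2$ functions. It therefore suffices to show this projection vanishes, and for that I would show that every $G$-invariant $\psi\in L_2(X,\nu)$ vanishes $\nu$-almost everywhere on $X_2$: the set $\{|\psi|>\varepsilon\}$ is $G$-invariant and, by the Chebyshev inequality, of finite $\nu$-measure, so by the maximality established in (1) its intersection with $X_2$ is $\nu$-null; letting $\varepsilon\to0$ gives $\psi|_{X_2}=0$. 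Since $\varphi$ is supported on $X_2$, the product $\varphi\psi$ vanishes identically and hence $\langle\varphi,\psi\rangle_{L_2(\nu)}=0$ for every invariant $\psi$, so $\mathbb E(\varphi\mid\mathcal I_G^\nu)=0$ and $\mathcal A_n\varphi\to0$ in $L_2(X,\nu)$.

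Finally, the assertions about ``almost all ergodic components'' hold for the distinguished decomposition above by the very construction of $h$; to upgrade them to an arbitrary ergodic decomposition I would invoke Theorem \ref{ergdecinfpcl}, observing that finiteness of a projective measure is a Borel, scaling-invariant property, that the measure class $PCL(\nu)$ on $\mathbb P\mathfrak M^\infty$ is the same for every admissible ergodic decomposition, and that the components landing in $X_1$ are exactly those whose projective class lies in $\mathbb P\mathfrak M$. The main obstacle I anticipate is measure-theoretic bookkeeping rather than a deep new idea: defining the conditional expectations cleanly for the infinite measure $\nu$ (circumvented by working throughout with the finite measure $f\nu$ and the monotone truncation of $1/f$), and checking that the finite/infinite dichotomy is genuinely independent of the chosen decomposition, which is exactly where the uniqueness of the measure class in Theorem \ref{ergdecinfpcl} is needed.
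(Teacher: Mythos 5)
The paper itself offers no proof of this corollary: it is asserted to follow ``immediately'' from the preceding results, so there is no line-by-line argument to compare yours against. Your proposal is, in substance, a correct realization of what the paper leaves implicit, and it follows the natural route: pass to the probability measure $f\nu\in\mathfrak M(\mathfrak T,\rho_f)$, use the canonical Rohlin decomposition of Theorem \ref{ergdecstrcont}, and encode the mass of the ergodic component through $x$ in the $G$-invariant Borel function $h=\lim_N\mathcal A_\infty^{\rho_f}(\min(1/f,N))$. The identity $\nu(B)=\int_B h\,d(f\nu)$ for invariant $B$, the exhaustion $Y_N=\{h\le N\}$, and the orthogonality argument in $L_2$ are all sound. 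One remark on the statement itself: what you prove is $\nu(Y\setminus X_1)=0$ (every finite-measure invariant set lies in $X_1$ modulo null sets), not the literal $\nu(X_1\setminus Y)=0$ of the text; the literal version is evidently a misprint (take $Y=\emptyset$), and your reading is the correct one.

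Two steps deserve tightening. First, for the claims about ``any ergodic decomposition'': Theorem \ref{ergdecinfpcl} as stated concerns \emph{admissible} decompositions, whereas the corollary allows arbitrary ones. To close this you should first extend the weak decomposition identity from sets to nonnegative functions (by monotone approximation, using $\sigma$-finiteness of $\nu$), conclude that $0<\eta(f)<+\infty$ for $\overline\nu$-almost every component $\eta$, and then normalize $\eta\mapsto\eta/\eta(f)$, $d\overline\nu\mapsto\eta(f)\,d\overline\nu$, to reduce any ergodic decomposition to an admissible one supported on $\mathfrak M^\infty_{f,1,\erg}$ whose components are proportional to the original ones; since finiteness is scaling-invariant, the dichotomy then transfers via the uniqueness of $PCL(\nu)$ exactly as you indicate. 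Second, in claim (2): a $G$-invariant $\psi\in L_2(X,\nu)$ is only \emph{almost} invariant (invariant $\nu$-a.e.\ under each $T_g$), so the level set $\{|\psi|>\varepsilon\}$ is an almost-invariant set, while your maximality statement from (1) applies to genuinely invariant Borel sets. You need one more line: either replace $\psi$ by the genuinely $G$-invariant representative $\limsup_n\mathcal A_n\psi$, or invoke the paper's proposition that almost-invariant sets agree with invariant Borel sets modulo null sets, applied to the probability measure $f\nu$ (whose null sets coincide with those of $\nu$ since $f>0$). Relatedly, Lemma \ref{avcomp} is stated only for probability measures, so the identification of $\mathcal A_n$ with the orthogonal projection of $L_2(X,\nu)$ onto the $K(n)$-invariant functions for the infinite invariant measure $\nu$ rests on the direct Fubini computation you sketch, not on the lemma itself; that computation is routine and should simply be recorded. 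With these repairs your argument is complete.
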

By definition, the sets  $X_1$, $X_2$ are unique up to subsets of measure zero.

In the case of continuous actions, a following description can also be given.
Let $X$ be a complete separable metric space, and let $\nu$ be a Borel measure that assigns finite weight to every ball.
Given a point $x\in x$, introduce the {\it orbital measures} $\eta^n_x$ by 
the formula
$$
\eta^n_x=\int\limits_{K(n)}\delta_{T_{k}x}d\mu_{K(n)}(k).
$$
Equivalently, for any bounded continuous function $f$ on $X$, we have 
$$
\int\limits_X fd\eta^n_x=\int\limits_{K(n)}f({T_{k}x})d\mu_{K(n)}(k).
$$

In this case the sets $X_1$, $X_2$ admit the following characterization:
the set $X_1$ is the set of all $x$ for which the sequence $\eta^n_x$ weakly converges 
to a probability measure as $n\to\infty$, while the set $X_2$ is the set of all $x$   
such that for any  bounded continuous function $f$ on $X$ whose support is a bounded set, we have 
$$
\lim\limits_{n\to\infty}\int\limits_X fd\eta^n_x=0.
$$

\section{Kolmogorov's example and proof of Proposition \ref{twoindec}.}

\subsection{Kolmogorov's Example.} For completeness of the exposition we briefly recall Kolmogorov's example \cite{Fomin} showing that, for actions of large groups, ergodic invariant probability measures may fail to be indecomposable.

Let $G$ be the group of all bijections of $\mathbb Z$, and let $\Omega_2$ be the space of bi-infinite binary sequences. The group $G$ acts on $\Omega_2$ and preserves any Bernoulli measure on $\Omega_2$.

Let $G_0 \subset G$ be the subgroup of {\it finite} permutations, that is, permutations that only move a finite subset of symbols. The group $G_0$ is inductively compact. De Finetti's Theorem states that $G_0$-invariant indecomposable (or, equivalently, ergodic) probability measures on $\Omega_2$ are precisely the Bernoulli measures.

It follows that $G$-invariant indecomposable probability measures are precisely Bernoulli measures as well. Nonetheless, if $\nu_1$ and $\nu_2$ are two distinct non-atomic Bernoulli measures on $\Omega_2$, then the measure $\frac{\nu_1 + \nu_2}{2}$ is ergodic! Indeed, the group $G$ has only countably many orbits on $\Omega_2$ and it is easily verified that any $G$-invariant set must have either full or zero measure with respect to $\frac{\nu_1+\nu_2}{2}$.

\subsection{Proof of Proposition \ref{twoindec}.}

As before, let $(X, \mathcal B)$ be a standard Borel space. Let $G$ be an arbitrary group, and let $\mathfrak T$ be an action of $G$ on $X$. The action $\mathfrak T$ will be called {\it weakly measurable} if for any $g \in G$ the transformation $T_g$ is Borel measurable.
Similarly, a positive multiplicative cocycle
\[
\rho: \; G\times X \longrightarrow \mathbb R_{>0}
\]
will be called {\it weakly measurable} if for any $g \in G$ the function $\rho( g,x)$ is Borel measurable in $x$. For a weakly measurable cocycle $\rho$ the space $\mathfrak M(\mathfrak T, \rho)$ is defined in the same way and is again a convex cone. A measure $\nu \in \mathfrak M(\mathfrak T, \rho)$ will be called { \it strongly indecomposable } if a representation
\[
\nu = \alpha \nu_1 + (1-\alpha) \nu_2
\]
with $\nu_1, \nu_2 \in \mathfrak M( \mathfrak T, \rho)$, $\alpha \in (0,1)$ is only possible when $\nu = \nu_1 = \nu_2$. A measure $\nu$ will be called {\it weakly indecomposable } if for any Borel measurable set $A$ satisfying, for every $g \in G$, the condition $\nu(A \triangle T_g A)=0$, we must have $\nu(A) = 0$ or $\nu(A) = 1$.

\vspace{5mm}
\begin{proposition}
A measure $\nu \in \mathfrak M ( \mathfrak T, \rho)$ is weakly indecomposable if and only if it is strongly indecomposable.
\end{proposition}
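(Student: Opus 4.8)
The plan is to prove both implications by contraposition, in each case converting a nontrivial splitting of $\nu$ into an almost-invariant set of intermediate measure, or vice versa. The single computational tool I will use throughout is the substitution formula
$$
\int_{T_g C} \psi \, d\nu \;=\; \int_C \psi(T_g x)\, \rho(g,x)\, d\nu(x),
$$
valid for every $\nu \in \mathfrak M(\mathfrak T, \rho)$, every Borel set $C$, and every nonnegative measurable $\psi$. It follows at once from the defining relation $\nu(T_g D) = \int_D \rho(g,x)\,d\nu(x)$: for $\psi = \chi_E$ one writes $T_g C \cap E = T_g(C \cap T_g^{-1}E)$ and applies the relation, and the general case follows by approximation with simple functions.

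First I would show that strong indecomposability implies weak indecomposability. Assume $\nu$ fails to be weakly indecomposable, so there is a Borel set $A$ with $\nu(A \triangle T_g A)=0$ for every $g \in G$ and $0 < \nu(A) < 1$. I claim the normalized restrictions $\nu_1 = \nu|_A / \nu(A)$ and $\nu_2 = \nu|_{X \setminus A}/\nu(X\setminus A)$ both lie in $\mathfrak M(\mathfrak T, \rho)$. To see this, compute $(\nu|_A)(T_g C) = \nu(T_g C \cap A) = \int_C \chi_A(T_g x)\,\rho(g,x)\,d\nu(x)$ by the substitution formula, and then observe that $\chi_A \circ T_g = \chi_{T_g^{-1}A} = \chi_{T_{g^{-1}}A}$ agrees with $\chi_A$ up to a $\nu$-null set, by almost-invariance applied to $g^{-1}$. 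This yields $(\nu|_A)(T_g C) = \int_C \rho(g,x)\,d(\nu|_A)(x)$, so $\nu_1 \in \mathfrak M(\mathfrak T,\rho)$, and symmetrically (since $X \setminus A$ is almost-invariant as well) $\nu_2 \in \mathfrak M(\mathfrak T,\rho)$. As $\nu = \nu(A)\,\nu_1 + \nu(X\setminus A)\,\nu_2$ with $\nu_1 \ne \nu_2$ (they are carried by disjoint sets) and $\nu(A) \in (0,1)$, the measure $\nu$ is not strongly indecomposable.

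For the converse I would again argue by contraposition. Suppose $\nu = \alpha \nu_1 + (1-\alpha)\nu_2$ with $\nu_1, \nu_2 \in \mathfrak M(\mathfrak T, \rho)$, $\alpha \in (0,1)$, and $\nu_1 \ne \nu_2$; note this forces $\nu_1 \ne \nu$. Both $\nu_i$ are absolutely continuous with respect to $\nu$, so set $\varphi = d\nu_1/d\nu$. The key step is to show $\varphi$ is $\mathfrak T$-invariant. Computing $\nu_1(T_g C)$ in two ways — once as $\int_C \rho(g,x)\varphi(x)\,d\nu(x)$ using the cocycle of $\nu_1$, and once as $\int_{T_g C}\varphi\,d\nu = \int_C \varphi(T_g x)\,\rho(g,x)\,d\nu(x)$ via the substitution formula — and equating the integrands, which is legitimate because $\rho(g,x) > 0$, gives $\varphi(T_g x) = \varphi(x)$ for $\nu$-almost every $x$, for each $g$. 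Since $\int \varphi\,d\nu = 1$ while $\nu_1 \ne \nu$, the function $\varphi$ is not $\nu$-almost everywhere constant, so there is a level $c$ with $0 < \nu(\{\varphi \le c\}) < 1$. The set $A = \{\varphi \le c\}$ is almost-invariant — its symmetric difference with $T_g A$ is $\nu$-null because $\varphi \circ T_g^{-1} = \varphi$ a.e. — and has intermediate measure, so $\nu$ is not weakly indecomposable.

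The main obstacle is the pair of dual change-of-variables computations driven by the substitution formula: verifying that the restriction $\nu|_A$ inherits the cocycle $\rho$ in the first implication, and that the density $\varphi$ is genuinely invariant in the second. Both hinge on the strict positivity of $\rho$, which permits cancellation of $\rho(g,x)$ inside the integrals, and on the identity $T_g^{-1} = T_{g^{-1}}$ linking almost-invariance of a set to its invariance under the inverse element. Once these are in place the remainder is bookkeeping; in particular, no measurability of the action beyond weak measurability is invoked, so the equivalence holds in the stated generality.
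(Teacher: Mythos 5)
Your proof is correct, but it takes a genuinely different route from the paper's. The paper never proves the two implications directly: it passes to what it calls an equivalent reformulation --- a dichotomy asserting that any two weakly indecomposable measures $\nu_1,\nu_2\in\mathfrak M(\mathfrak T,\rho)$ are either equal or mutually singular --- and proves that instead. There the key device is the Jordan decomposition $\nu_1=\widetilde\nu_2+\nu_3$ with $\widetilde\nu_2\ll\nu_2$, $\nu_3\perp\nu_2$: a Borel carrier $A$ of $\nu_3$ with $\nu_2(A)=0$ is shown to be almost invariant for $\nu_1$ (using that every measure in $\mathfrak M(\mathfrak T,\rho)$ is quasi-invariant, since $\rho>0$), so weak indecomposability of $\nu_1$ forces $\nu_1(A)\in\{0,1\}$, i.e.\ either $\nu_1\perp\nu_2$ or $\nu_1\ll\nu_2$; in the latter case the density $d\nu_1/d\nu_2$ is shown to be invariant, hence constant by weak indecomposability of $\nu_2$. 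Your second implication (invariance of $\varphi=d\nu_1/d\nu$, then a level set of intermediate measure) is essentially that last step of the paper's argument, read contrapositively. Your first implication --- the verification that the normalized restrictions of $\nu$ to an almost-invariant set and to its complement again have Radon--Nikodym cocycle $\rho$ --- has no counterpart in the paper, and it is not a redundancy: the dichotomy, used as a black box, does not by itself yield ``strongly indecomposable implies weakly indecomposable,'' since the two pieces of a candidate splitting are not known to be weakly indecomposable, so the paper's passage from its reformulation back to the stated proposition tacitly rests on exactly the restriction computation you supply. In short, your argument is more self-contained for the proposition as stated, while the paper's route buys a stronger, independently useful fact (distinct indecomposable measures are mutually singular), which is what underlies the singularity assertions of Theorem~\ref{ergdecstrcont} and the uniqueness of the ergodic decomposition.
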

It is more convenient to prove the following equivalent reformulation.
\begin{proposition}
Let $\rho$ be a positive multiplicative weakly measurable cocycle over a weakly measurable action of a group $G$ on a standard Borel space $(X, \mathcal B)$. Let $\nu_1, \nu_2 \in \mathfrak M( \mathfrak T, \rho)$ be weakly indecomposable. Then either $\nu_1 = \nu_2$ or $\nu_1 \perp \nu_2$.
\end{proposition}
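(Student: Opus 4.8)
The plan is to form the average $\mu = \tfrac{1}{2}(\nu_1 + \nu_2)$, to study the Radon--Nikodym density $\phi = d\nu_1/d\mu$, and to show that $\phi$ is $G$-almost-invariant. Weak indecomposability will then force $\phi$ to be almost surely constant with respect to each $\nu_i$, and a short case analysis will yield the desired dichotomy. Since $\mathfrak M(\mathfrak T, \rho)$ is a convex cone, $\mu$ again has Radon--Nikodym cocycle $\rho$, so that $d(\mu \circ T_g) = \rho(g, \cdot)\, d\mu$ for every $g \in G$. Both $\nu_1$ and $\nu_2$ are absolutely continuous with respect to $\mu$, the density satisfies $0 \le \phi \le 2$ $\mu$-almost everywhere, and $d\nu_2/d\mu = 2 - \phi$.

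The main step, and the technical heart of the argument, is to prove that $\phi \circ T_g = \phi$ holds $\mu$-almost everywhere for every $g \in G$. Here I would exploit that $\nu_1$ and $\mu$ share the same cocycle $\rho$. Starting from the elementary transformation rule $\int h\, d(\mu \circ T_g) = \int (h \circ T_{g^{-1}})\, d\mu$, valid for any bounded measurable $h$ and obtained directly from $\mu\circ T_g(B)=\mu(T_gB)$, one computes on the one hand $d(\nu_1 \circ T_g) = (\phi \circ T_g)\, d(\mu \circ T_g) = (\phi \circ T_g)\,\rho(g,\cdot)\, d\mu$. On the other hand, directly from $\nu_1 \in \mathfrak M(\mathfrak T, \rho)$ we have $d(\nu_1 \circ T_g) = \rho(g, \cdot)\, d\nu_1 = \rho(g, \cdot)\,\phi\, d\mu$. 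Comparing the two expressions and cancelling the strictly positive factor $\rho(g, \cdot)$ yields $\phi \circ T_g = \phi$ $\mu$-almost everywhere. The crucial point is that the two cocycle factors cancel precisely because $\nu_1$ and $\nu_2$, hence $\mu$, all carry the identical cocycle; this is exactly where the hypothesis $\nu_1, \nu_2 \in \mathfrak M(\mathfrak T, \rho)$ with the \emph{same} $\rho$ is used.

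With $\phi$ shown to be $G$-almost-invariant, I would fix $c \in \mathbb R$ and consider the level set $A_c = \{x : \phi(x) > c\}$. Almost-invariance of $\phi$ gives $\mu(A_c \triangle T_g A_c) = 0$ for all $g$, and since $\nu_1, \nu_2 \ll \mu$ the same holds with $\mu$ replaced by either $\nu_1$ or $\nu_2$. Weak indecomposability of each $\nu_i$ then forces $\nu_i(A_c) \in \{0, 1\}$ for every $c$. Because the survival function $c \mapsto \nu_1(A_c)$ is non-increasing, right-continuous and $\{0,1\}$-valued, $\phi$ must equal some constant $c_1$ $\nu_1$-almost surely; likewise $\phi = c_2$ $\nu_2$-almost surely for some constant $c_2$.

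Finally I would split into two cases. If $c_1 = c_2$, then $\phi$ equals this common constant on a set of full $\nu_1$- and full $\nu_2$-measure, hence $\mu$-almost everywhere; the constant must be $1$ since $\int \phi\, d\mu = \nu_1(X) = 1$, so $\nu_1 = \mu = \nu_2$. If $c_1 \ne c_2$, then $\nu_1$ and $\nu_2$ are concentrated on the disjoint sets $\{\phi = c_1\}$ and $\{\phi = c_2\}$ respectively, whence $\nu_1 \perp \nu_2$. In either case the proposition follows. I expect the careful transformation-rule bookkeeping in the second paragraph to be the only genuine obstacle; once $\phi$ is known to be almost-invariant, the passage through weak indecomposability and the concluding dichotomy are routine.
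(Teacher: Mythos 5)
Your proof is correct, but it follows a genuinely different route from the paper's. The paper starts from the Lebesgue--Jordan decomposition $\nu_1 = \widetilde\nu_2 + \nu_3$ with $\widetilde\nu_2 \ll \nu_2$, $\nu_3 \perp \nu_2$; it uses quasi-invariance to show that a Borel set separating $\nu_2$ from the singular part $\nu_3$ is almost invariant with respect to $\nu_1$, so weak indecomposability of $\nu_1$ forces a dichotomy: either $\nu_1 \perp \nu_2$, or $\nu_3 = 0$, and in the latter case the density $\varphi = d\nu_1/d\nu_2$ is almost surely $G$-invariant and weak indecomposability of $\nu_2$ forces $\varphi \equiv 1$. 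You instead symmetrize: since $\mathfrak M(\mathfrak T,\rho)$ is a convex cone, $\mu = \tfrac{1}{2}(\nu_1+\nu_2)$ again lies in $\mathfrak M(\mathfrak T,\rho)$, both $\nu_i \ll \mu$ hold automatically, and the single bounded density $\phi = d\nu_1/d\mu$ carries everything: your cocycle-cancellation computation establishing $\phi\circ T_g = \phi$ $\mu$-a.e.\ is correct, the level sets $A_c$ are then almost invariant for each $\nu_i$, the zero-one law gives $\phi = c_i$ $\nu_i$-a.s., and the dichotomy falls out of comparing $c_1$ with $c_2$. What your version buys: no appeal to the Lebesgue decomposition theorem, a unified treatment of the singular and absolutely continuous cases, a bounded density (so the survival-function argument is clean), and an explicit proof of the step the paper leaves implicit --- that weak indecomposability forces an a.s.-invariant function to be a.s.\ constant. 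What the paper's version buys: its singular case uses only quasi-invariance of $\nu_2$ (absolute continuity of translates), reserving the full strength of the shared cocycle for the absolutely continuous case. One point of hygiene for your write-up: choose a Borel version of $\phi$, so that the sets $A_c$ are Borel as required by the paper's definition of weak indecomposability, and note that $T_g A_c = (T_{g^{-1}})^{-1}(A_c)$ is Borel because each $T_{g^{-1}}$ is a Borel map.
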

\begin{proof}
Indeed, let $\nu_1, \nu_2 \in \mathfrak M( \mathfrak T, \rho)$ be weakly indecomposable. Consider the Jordan decomposition of $\nu_1$ with respect to $\nu_2$ and write
\[
\nu_1 \; =\; \widetilde \nu_2 + \nu_3, \quad \widetilde \nu_2 \ll \nu_2, \quad \nu_3 \perp \nu_2.
\]

Since $\nu_2 \circ T_g \ll \nu_2$, we also have $\nu_2 ( \setminus T_gA)=0$ for each $g \in G$. It follows that for each $g \in G$ we have $\nu_1 (A \triangle T_g A) = 0$, whence either $\nu_1 (A) =0$ or $\nu_1 (A) = 1$. If $\nu_1(A)=0$, then $\nu_1 \perp \nu_2$, and we are done. If $\nu_1(A) = 1$, then $\nu_3=0$, and we have $\nu_1 \ll \nu_2$. Set
\[
\varphi \;=\; \frac{d\nu_1}{d\nu_2}.
\]
Since $\nu_1, \nu_2 \in \mathfrak M( \mathfrak T, \rho)$ and $\nu_1 \ll \nu_2$, for each $g \in G$ the function $\varphi$ satisfies, $\nu_2$-almost surely, the equality
\[
\varphi(T_g x) \;=\; \varphi(x)
\]
But then, the weak indecomposability of $\nu_2$ implies that $\varphi=1$ almost surely with respect to $\nu_2$, and, therefore, $\nu_1 = \nu_2$. The Proposition is proved completely.
\end{proof}

\end{document}